\documentclass[10pt,letterpaper,reqno]{amsart}

\title[Semi-infinite programming using polynomial interpolants]{Semi-infinite programming using high-degree polynomial interpolants and semidefinite programming}
\author{D{\'a}vid Papp}
\address{North Carolina State University, Department of Mathematics. E-mail: \url{dpapp@ncsu.edu}.}
\date{\today. \revisiontwo{The author thanks the referees and Sercan Y{\i}ld{\i}z for their numerous constructive suggestions on how to improve the manuscript. This material was based upon work partially supported by the National Science Foundation under Grant DMS-1127914 to the Statistical and Applied Mathematical Sciences Institute. Any opinions, findings, and conclusions or recommendations expressed in this material are those of the author(s) and do not necessarily reflect the views of the National Science Foundation.}}

\usepackage[margin=1.5in]{geometry}
\usepackage{amsmath}
\usepackage{amssymb}
\usepackage{enumerate}
\usepackage[colorlinks=true, citecolor=black, linkcolor=black, urlcolor=black, setpagesize=false, plainpages=false, pdfpagelabels]{hyperref}
\usepackage{doi}
\usepackage{graphicx}
\usepackage{epstopdf}
\usepackage{booktabs}

\newcommand{\deletethis}[1]{{}}

\deletethis{
\newcommand{\vA}{\mathbf{A}}
\newcommand{\vb}{\mathbf{b}}
\newcommand{\vB}{\mathbf{B}}

\newcommand{\vD}{\mathbf{D}}

\newcommand{\vf}{\mathbf{f}}
\newcommand{\vg}{\mathbf{g}}

\newcommand{\vI}{\mathbf{I}}

\newcommand{\vM}{\mathbf{M}}

\newcommand{\vs}{\mathbf{s}}
\newcommand{\vt}{\mathbf{t}}
\newcommand{\vu}{\mathbf{u}}

\newcommand{\vW}{\mathbf{W}}
\newcommand{\vx}{\mathbf{x}}
\newcommand{\vy}{\mathbf{y}}
\newcommand{\vX}{\mathbf{X}}

\newcommand{\vz}{\mathbf{z}}

\newcommand{\vzero}{\mathbf{0}}

}

\newcommand{\vA}{{A}}
\newcommand{\vb}{{b}}
\newcommand{\vB}{{B}}

\newcommand{\vD}{{D}}

\newcommand{\vf}{{f}}
\newcommand{\vg}{{g}}

\newcommand{\vI}{{I}}

\newcommand{\vM}{{M}}

\newcommand{\vs}{{s}}
\newcommand{\vt}{{t}}
\newcommand{\vu}{{u}}

\newcommand{\vW}{{W}}
\newcommand{\vx}{{x}}
\newcommand{\vy}{{y}}
\newcommand{\vX}{{X}}

\newcommand{\vz}{{z}}

\newcommand{\vzero}{{0}}

\newcommand{\real}{\mathbb{R}}

\newcommand{\cI}{\mathcal{I}}

\newcommand{\Sk}{\mathbb{S}^k}
\newcommand{\Ski}{\mathbb{S}^{k_i}}
\newcommand{\Sm}{\mathbb{S}^m}
\newcommand{\Smp}{\Sm_+}
\newcommand{\T}{\mathrm{T}}
\newcommand{\Mm}{\mathcal{M}}

\newcommand{\lcm}{\operatorname{lcm}}
\newcommand{\den}{\operatorname{den}}

\newcommand{\conv}{\operatorname{conv}}
\DeclareMathOperator*{\dom}{dom}
\newcommand{\tr}{\operatorname{tr}}
\newcommand{\defeq}{\ensuremath{\overset{\mathrm{def}}{=}}}

\theoremstyle{plain}
\newtheorem{definition}{Definition}
\newtheorem{lemma}[definition]{Lemma}
\newtheorem{proposition}[definition]{Proposition}
\newtheorem{theorem}[definition]{Theorem}

\theoremstyle{definition}
\newtheorem{example}{Example}

\newcommand{\revision}[1]{#1}
\newcommand{\revisiontwo}[1]{#1}
\newcommand{\revisionthree}[1]{{#1}}

\begin{document}
	
\begin{abstract}
	In a common formulation of semi-infinite programs, the infinite constraint set is a requirement that a function parametrized by the decision variables is nonnegative over an interval. If this function is sufficiently closely approximable by a polynomial or a rational function, then the semi-infinite program can be reformulated as an equivalent semidefinite program, \revision{which in turn can be solved with interior-point methods very efficiently to high accuracy}. On the other hand, solving this semidefinite program is challenging if the polynomials involved are of high degree, due to numerical difficulties and bad scaling arising both from the polynomial approximations and from the fact that the semidefinite programming constraints coming from the sum-of-squares representation of nonnegative polynomials are badly scaled. We combine \revision{polynomial} function approximation techniques and polynomial programming to overcome these numerical difficulties, using sum-of-squares interpolants. Specifically, it is shown that the conditioning of the reformulations using sum-of-squares interpolants does not deteriorate with increasing degrees, and problems involving sum-of-squares interpolants of hundreds of degrees can be handled without difficulty. The proposed reformulations are sufficiently well scaled that they can be solved easily with every commonly used semidefinite programming solver, such as SeDuMi, SDPT3, and CSDP. Motivating applications include convex optimization problems with semi-infinite constraints and semidefinite conic inequalities, such as those arising in the optimal design of experiments. Numerical results align with the theoretical predictions; in the problems considered, available memory was the only factor limiting the degrees of polynomials, to approximately 1000.
\end{abstract}
	
	\maketitle
	
\section{Introduction}\label{sec:introduction}


A linearly constrained semi-infinite convex optimization problem with infinitely many linear constraints \revisiontwo{indexed by a one-dimensional index set} can be posed as:
\begin{equation}
\begin{split}
\textrm{minimize}_\vx &\quad f(\vx)\\
\textrm{subject to}   &\quad \vA(\vt)\vx \leq \vb\quad \forall\,\revisiontwo{\vt\in T= [0,1]}\\
&\quad \vx \in X
\end{split} \label{eq:SILP}
\end{equation}
with respect to the decision variables $\vx$, \revisiontwo{where the set $X\subseteq\real^n$ is convex, closed and bounded, $f$ is convex and continuous on $X$, $\vb$ is an $m$-dimensional vector, $\vA$ is a $[0,1]\to\real^m$ function}. \revisiontwo{Slightly more generally, the index set $T$ may be a finite union of closed intervals}. Even without any restrictions on the dependence of $\vA$ on $\vt$, \eqref{eq:SILP} is a convex optimization problem, and the Weierstrass extreme value theorem guarantees that its minimum is attained.

In many applications, $\vx$ represents a function $p\colon T\mapsto\real$ that is known to belong to a given finite dimensional linear space (that is, $\vx$ is the coefficient vector of $p$ in some fixed basis), and the infinite constraint set represents $p(t)\geq 0$ for all $t\in T$. Similar constraints on the derivatives of a differentiable function, such as $\tfrac{dp(t)}{dt}\geq 0$ (implying monotonicity) or $\tfrac{d^2p(t)}{dt^2}\geq 0$ (implying convexity) can also be represented in a similar fashion. Optimization models incorporating such constraints have been used, for example, in arrival rate estimation \cite{PappAlizadeh-2013}, and in semi-parametric density estimation with and without shape constraints \cite{PappAlizadeh-2014}.

In most applications, including all the ones mentioned above, the index set $T$ is low-dimensional, and is often simply an interval or a two-dimensional rectangular box \cite{LopezStill-2007,GobernaLopez-98,ReemtsenRuckmann-98}. \revisiontwo{In this paper, we focus on the one-dimensional case.}

\deletethis{
Most research in semi-infinite optimization theory and algorithms focus on linearly constrained problems, appealing to the fact that the general convex problem \eqref{eq:SICP} is equivalent to the linearly constrained problem (SILP) formulated below:
\begin{equation}
\begin{split}
\textrm{minimize}_\vx&\quad f(\vx)\\
\textrm{subject to}  &\quad \vu_i^\T \vx - g_{i,\vt}^*(\vu_i)\leq 0\quad \forall\,\vt\in T\text{ and } \vu_i\in \dom g_{i,\vt}^*\\
&\quad \vx \in X,
\end{split}\tag{SILP}\label{eq:SILP}
\end{equation}
where $g_{i,\vt}^*$ denotes the conjugate function of the $i$th component of $\vg(\cdot,\vt)$.
}

Several algorithms have been proposed to solve semi-infinite linear and semi-infinite convex programming problems, including cutting plane and cutting surface methods such as \cite{KortanekNo1993, Betro2004, MehrotraPapp-2014}, local reduction methods \cite{GobernaLopez-98}, exchange methods \cite{ZhangWuLopez-2010}, and homotopy methods \cite{Liu-2007}. See also \cite{LopezStill-2007} for a relatively recent review on semi-infinite convex programming, including an overview on numerical methods with plenty of references.

\revisiontwo{The primary motivation behind this work is the case when solutions need to be computed with high accuracy, which is often the case when the results of the optimization are inputs of further numerical computations that are sensitive to errors in their inputs. Such problems arise for example in the computation of optimal designs of experiments, where the points in $T$ for which the semi-infinite constraints are active (the zeros of the optimal $b-A(\cdot)x$) need to be determined.} We shall revisit this problem, and discuss it in more detail, in Section \ref{sec:design-linear-regression}. \revision{There are several other areas where certifying the nonnegativity of high-degree univariate polynomials and (more generally, the positive semidefiniteness of univariate polynomial matrices) is of high importance, these include numeric-symbolic computation in algebraic geometry,  optimal control, and signal processing; see, e.g., \cite{MeniniTornambe2015, AylwardItaniParrilo2007}.}

The methods mentioned above for solving \eqref{eq:SILP} rarely have faster than linear convergence, which is sufficient for computing  low-accuracy solutions, but makes them particularly challenging to apply when the solutions are needed to be computed with high accuracy. \revisiontwo{The convergence rate of discretization methods is particularly well understood \cite{Still2001}.} On the other hand, in special cases that admit ``nice'' finitely constrained convex optimization formulations, or a linear cone optimization formulation over ``nice'' cones such as symmetric cones, superlinearly convergent (or even polynomial time) interior point methods can be utilized to compute solutions with high accuracy. These special cases include when all the sets and functions involved in the problem formulation are \emph{semidefinite representable} (see below), which implies that \eqref{eq:SILP} can be posed as a semidefinite optimization problem.

Let $\Sk$ denote the set of $k\times k$ real symmetric matrices, and $\Sk_+$ denote the set of positive semidefinite matrices from $\Sk$. We shall also use the notation $\vM\succcurlyeq\vzero$ to denote that the matrix $\vM$ is positive semidefinite. Recall \cite{VandenbergheBoyd-1996}, \cite[Sec.~4.2]{BenTalNemirovski-2001} that a set $S\subseteq\real^n$ is \emph{semidefinite representable} if for some $k\geq 1$ and $\ell \geq 0$ there exist affine functions $A:\real^n\mapsto\Sk$ and $C:\real^\ell\mapsto\Sk$ such that the set $S$ can be characterized by a linear matrix inequality in the following way:
	\[ S = \{\vs \in \real^n\;|\; \exists\, \vu \in \real^\ell \colon A(\vs) + C(\vu) \succcurlyeq \vzero \}. \]
With this definition, we can also say that a continuous \emph{function} is \emph{semidefinite representable} if all of its \revision{(closed)} lower level sets are semidefinite representable. (For maximization problems, a concave objective function is semidefinite representable if its closed upper level sets are semidefinite representable.)

For the rest of the paper we assume that
\begin{enumerate}
	\item the objective function $f$ is semidefinite representable;
	\item the set $X$ is semidefinite representable;
	\item \revisiontwo{the set $T$ is the union of finitely many closed and bounded intervals}; and
	\item the elements of $\vA(t)$ are continuous on $T$.
\end{enumerate}

These assumptions are practically nonrestrictive, and the first \revisiontwo{two} of them imply that aside from the infinite constraint set $\vA(t)\vx \leq \vb$, the problem \eqref{eq:SILP} can be posed as a semidefinite program. If, additionally, $\vA(t)$ belongs to a ``nice'' set of univariate functions, for which the set $\left\{\vx\in\real^n\,\middle|\,\vA(t)\vx \leq \vb\; \forall\,t\in [a,b]\right\}$ is also semidefinite representable for every given interval $[a,b]$, then \eqref{eq:SILP} can be reformulated as a semidefinite program, and can in principle be solved to high accuracy with existing semidefinite programming solvers.

\revisiontwo{The assumptions (3) and (4) imply} that the elements of $\vA(t)$ can be approximated arbitrarily closely in the uniform norm by polynomials \cite[Chapter 1]{Timan-1963}. Since the set of nonnegative polynomials is also semidefinite representable \cite{KarlinStudden-1966,BenTalNemirovski-2001}, this suggests the following approach:
\begin{enumerate}
	\item Replace all entries of $\vA(t)$ by polynomials that approximate the entries within a chosen $\varepsilon>0$ in the uniform norm, and
	\item reformulate the resulting approximate optimization problem as a semidefinite program, and solve this semidefinite program using interior point methods.
\end{enumerate}
While this approach is fairly ``obvious'', it can easily be viewed as entirely impractical, not the least because the semidefinite representation of nonnegative polynomials (when the polynomials are represented in the standard basis) involves Hankel matrices that are notoriously ill-conditioned \cite{Tyrtyshnikov-1994,Beckermann-1997}, and existing interior point methods are not designed to be able to handle such problems numerically. Although a change of basis to some orthogonal basis, such as the Chebyshev or Legendre polynomial basis, can somewhat mitigate this problem, the resulting semidefinite programs can still be difficult, if not impossible, to solve with existing semidefinite programming solvers, once the degree of the polynomials involved exceeds about 40 \cite{Papp-2011}. In contrast, a very close approximation of $\vA(t)$ may require polynomials with at least hundreds of degrees.

\revision{We shall mention that in Step (1) above one may wish to use rational function approximations in place of polynomials. Everything in this paper generalizes to rational function approximation, but for ease of presentation we focus on polynomial approximations only. See Section 7 (Discussion) for a brief description on how to incorporate rational function approximations.}

\revisionthree{As the main contribution of this paper, we demonstrate that using a suitable problem-dependent basis of polynomial interpolants to represent the approximators of $\vA(t)$, the above approach of polynomial approximation and semidefinite reformulation can be carried out efficiently, completely circumventing the aforementioned numerical difficulties and scaling issues, even if the polynomials involved are of high degree. The approach uses polynomial interpolants. The use of interpolants in polynomial optimization problems was first proposed in \cite{LofbergParrilo-2004}, where it was shown that it is numerically preferable to represent a polynomial nonnegativity constraint in terms polynomial interpolants instead of the standard monomial basis when optimizing a univariate polynomial using semidefinite programming. We show that a combination of constructive approximation techniques from \cite{Higham-2004,Trefethen-2013} and the semidefinite programming approach of \cite{LofbergParrilo-2004} yields a very efficient interior-point approach to a large class of semi-infinite programming problems.}

\revisionthree{From an implementation perspective, this means combining efficient and numerically stable algorithms for manipulating polynomial interpolants and standard semidefinite programming solvers. Beyond its simplicity, an additional advantage of this approach over developing tailor-made algorithms to handle the polynomial nonnegativity constraints is that this approach is directly applicable to convex optimization problems that involve semidefinite constraints other than the ones arising from polynomial nonnegativity. A family of such problems is presented in Section \ref{sec:design-linear-regression}}.

The rest of the paper is structured as follows. In Section \ref{sec:best-approximations}, we briefly review existing results regarding best polynomial approximations of smooth functions, and their representations as Lagrange or Hermite interpolants. In Section \ref{sec:sos-interpolants}, we extend the results of \cite{LofbergParrilo-2004} and \cite{deKlerkElabwabiDenHertog-2006} to provide semidefinite representations of sum-of-squares interpolants. Following that, we discuss how sum-of-squares Lagrange interpolants admit well-scaled semidefinite representations that can be handled with any existing semidefinite programming solver, in \mbox{Section \ref{sec:scaling}}. Section \ref{sec:upsampling} is concerned with a formally minor, but numerically critical issue of representing low-degree polynomials as high-degree interpolants in a fashion that does not affect the numerical properties of the semidefinite representation. This is an essential ingredient of optimization problems involving sum-of-squares polynomials of different degrees. Numerical examples are presented in Section \ref{sec:numerical}. The first couple of examples illustrate the individual components developed throughout Sections \ref{sec:best-approximations}--\ref{sec:upsampling}, while the statistical applications (concerning optimal designs of experiments) demonstrate how the developed methodology can be used to solve convex semi-infinite programs with both functional and conic constraints using high-degree polynomial approximations and semidefinite programming. Section \ref{sec:conclusions} concludes the paper with a discussion on the advantages and limitations of the approach.

\section{Two- and one-sided approximations}\label{sec:best-approximations}
In order to keep the paper self contained, we shall very briefly review a few key results from constructive approximation theory that we shall make use of. The reader will find much more detail on these ideas in the references provided with the Propositions below. We shall connect these results to conic programming representations of best and near-best polynomial approximations of smooth functions in the next section.

\revision{Consider a differentiable function $f\colon[-1,1]\mapsto\real$ that we wish approximate by a polynomial. If the approximating polynomial is allowed to take both larger and smaller values than $f$, we speak of a \emph{two-sided} approximation, while \emph{one-sided} approximation refers to the case when the approximating polynomial is required to stay above (or below) $f$. Best and near-best approximations can be particularly easily \revisiontwo{constructed} using Lagrange interpolation (for two-sided approximation) and Hermite interpolation (for one-sided approximation).}

\subsection{Lagrange interpolants and near-optimal two-sided approximation}
Consider a differentiable function $f\colon[-1,1]\mapsto\real$ that we wish to uniformly approximate by a polynomial, and take a set of distinct points $t_0,\dots,t_n$ in the interval $[-1,1]$. Then there is a unique polynomial $p_n$ of degree $n$ satisfying $p(t_j) = f(t_j)$ for each $j=0,\dots,n$. We call $p_n$ the \emph{polynomial (Lagrange) interpolant} of $f$ on the \emph{interpolation points} $t_0,\dots,t_n$. If the interpolation points are chosen well, increasing the number of interpolation points decreases the \emph{interpolation error} $\|f-p_n\|\revisiontwo{_\infty} \defeq \max_{-1\leq t\leq 1} |f(t)-p_n(t)|$. A particularly useful choice of interpolation points is the set of \emph{Chebyshev points of the second kind}, defined by the formula
\begin{equation}\label{eq:ChebyshevPoints-2ndkind}
t_\ell = \cos(\ell\pi/n)\quad \ell=0,\dots,n.
\end{equation}
Another frequently used interpolation points are the \emph{Chebyhev points of the first kind}; they are defined by the formula
\begin{equation}\label{eq:ChebyshevPoints-1stkind}
t_\ell = \cos((\ell + 1/2)\pi/(n+1))\quad \ell=0,\dots,n.
\end{equation}
Many other schemes exist for choosing interpolation points.

Using the Chebyshev points defined in \eqref{eq:ChebyshevPoints-2ndkind}, the interpolation error converges to zero at a rate dependent on an appropriate measure of smoothness of $f$:


\begin{proposition}[\protect{\cite[Theorem 7.2]{Trefethen-2013}}]\label{thm:prop1}
	For an integer $k\geq 1$, let $f$ and its derivatives through $f^{(k-1)}$ be absolutely continuous on $[-1,1]$, and suppose that the $k$th derivative $f^{(k)}$ is of bounded variation $V_{f^{(k)}}$. Then for every $n\geq k+1$, the interpolant $p_n$ through the Chebyshev points \eqref{eq:ChebyshevPoints-2ndkind} satisfies
	\[\|f-p_n\|\revisiontwo{_\infty} \leq \frac{4V_{f^{(k)}}}{\pi k(n-k)^k}.\]
\end{proposition}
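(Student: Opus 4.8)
The plan is to follow the classical Chebyshev-analysis route: expand $f$ in a Chebyshev series, observe that the degree-$n$ interpolant through the Chebyshev points of the second kind is obtained from the series by aliasing, and then bound the tail of the series plus the aliasing contribution by a single estimate on the Chebyshev coefficients of a function whose $k$th derivative has bounded variation. First I would write $f(t)=\sum_{j=0}^\infty a_j T_j(t)$ with $a_j = \frac{2}{\pi}\int_{-1}^1 f(t)T_j(t)(1-t^2)^{-1/2}\,dt$ (the factor halved for $j=0$), and recall the standard bound $|a_j|\le \frac{2V_{f^{(k)}}}{\pi\, j(j-1)\cdots(j-k)}\le \frac{2V_{f^{(k)}}}{\pi (j-k)^{k+1}}$ for $j\ge k+1$, which comes from integrating by parts $k$ times, picking up $f^{(k)}$ under the integral, and then estimating the remaining oscillatory integral against the total variation of $f^{(k)}$ using that the antiderivatives of the Chebyshev weight times $T_j$ are uniformly bounded by a constant over $j$ in a way that produces the product $j(j-1)\cdots(j-k)$ in the denominator.

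Next I would use the aliasing identity for interpolation at the second-kind Chebyshev points: if $p_n$ is the degree-$n$ interpolant of $f$ at the nodes $t_\ell=\cos(\ell\pi/n)$, then $p_n(t)=\sum_{j=0}^n a_j^{(n)} T_j(t)$ where each $a_j^{(n)}$ equals $a_j$ plus a sum of the coefficients $a_m$ with $m\equiv \pm j \pmod{2n}$, $m>n$. Consequently
\[
f(t)-p_n(t) \;=\; \sum_{j>n} a_j\bigl(T_j(t) - T_{j'(t)}\bigr)
\]
after reindexing, and since $\|T_j\|_\infty=1$ for all $j$, the triangle inequality gives $\|f-p_n\|_\infty \le 2\sum_{j=n+1}^\infty |a_j|$. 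Plugging in the coefficient bound yields $\|f-p_n\|_\infty \le \frac{4V_{f^{(k)}}}{\pi}\sum_{j=n+1}^\infty (j-k)^{-(k+1)}$.

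It then remains to estimate the tail sum $\sum_{j=n+1}^\infty (j-k)^{-(k+1)}$. Comparing with the integral $\int_{n-k}^\infty x^{-(k+1)}\,dx = \frac{1}{k(n-k)^k}$ (valid since $n\ge k+1$ ensures $n-k\ge 1$ and the summand is decreasing) gives exactly $\sum_{j=n+1}^\infty (j-k)^{-(k+1)} \le \frac{1}{k(n-k)^k}$, and hence the claimed bound $\|f-p_n\|_\infty \le \frac{4V_{f^{(k)}}}{\pi k (n-k)^k}$.

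The main obstacle, and the only step requiring genuine care, is the sharp coefficient estimate $|a_j|\le \frac{2V_{f^{(k)}}}{\pi(j-k)^{k+1}}$: one must perform the $k$-fold integration by parts rigorously under the sole hypothesis that $f,\dots,f^{(k-1)}$ are absolutely continuous and $f^{(k)}$ is merely of bounded variation (so the last "integration by parts" is a Riemann–Stieltjes step against $df^{(k)}$), and one must track the constants so that the denominator is the falling factorial $j(j-1)\cdots(j-k)$ rather than something weaker, since downgrading it to $(j-k)^{k+1}$ is exactly what makes the final tail-sum/integral comparison produce the clean constant $1/(k(n-k)^k)$. Everything else — the aliasing formula and the integral comparison — is routine. (This is, of course, precisely the proof in \cite[Theorem 7.2]{Trefethen-2013}, whose argument I would reproduce.)
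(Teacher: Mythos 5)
Your argument is correct and is precisely the proof of the cited result: the paper states this proposition without proof, importing it directly from \cite[Theorem 7.2]{Trefethen-2013}, and your chain (coefficient decay $|a_j|\le 2V_{f^{(k)}}/(\pi\,j(j-1)\cdots(j-k))\le 2V_{f^{(k)}}/(\pi(j-k)^{k+1})$ via $k$-fold integration by parts with a final Riemann--Stieltjes step, the aliasing identity at second-kind Chebyshev points giving $\|f-p_n\|_\infty\le 2\sum_{j>n}|a_j|$, and the integral comparison $\sum_{j=n+1}^\infty(j-k)^{-(k+1)}\le\int_{n-k}^\infty x^{-(k+1)}\,dx=1/(k(n-k)^k)$) is exactly Trefethen's. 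No gaps beyond the typographical slip $T_{j'(t)}$ for $T_{j'}(t)$.
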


The convergence is even faster, has a geometric rate, for analytic functions; we shall omit the somewhat complicated details of the constructive formulation of this theorem.

Interpolants through Chebyshev points are also nearly as good approximations as any other polynomial approximant of the same degree can be:
\begin{proposition}[\protect{\cite[Theorem 16.1]{Trefethen-2013}}]\label{thm:prop2}
Let $f$ be a continuous function on $[-1,1]$, and $p_n$ its degree $n$ interpolant through the Chebyshev points \eqref{eq:ChebyshevPoints-2ndkind}. Then for every polynomial $q$ of degree $n$, we have
	\[\|f-p_n\|\revisiontwo{_\infty} \leq \left(2+\tfrac{2}{\pi} \log(n+1)\right)\|f-q\|\revisiontwo{_\infty}.\]
\end{proposition}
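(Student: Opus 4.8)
The plan is to reduce the inequality to a bound on the Lebesgue constant of Chebyshev-point interpolation, and then estimate that constant. Interpolation at the $n+1$ points \eqref{eq:ChebyshevPoints-2ndkind} is a linear projection $L_n\colon C[-1,1]\to\mathcal{P}_n$ onto the space $\mathcal{P}_n$ of polynomials of degree at most $n$; writing $\ell_0,\dots,\ell_n$ for the associated Lagrange cardinal functions ($\ell_j(t_i)=\delta_{ij}$), we have $L_n g=\sum_{j=0}^n g(t_j)\ell_j$, and in particular $L_n q=q$ for every $q\in\mathcal{P}_n$. For an arbitrary $q\in\mathcal{P}_n$ this gives the identity
\[ f-p_n \;=\; (f-q)-L_n(f-q), \]
since $L_n(f-q)=p_n-q$. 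Passing to uniform norms and using $\|L_n g\|_\infty\le\Lambda_n\|g\|_\infty$, where $\Lambda_n\defeq\max_{-1\le t\le 1}\sum_{j=0}^n|\ell_j(t)|$ is the Lebesgue constant, we obtain $\|f-p_n\|_\infty\le(1+\Lambda_n)\|f-q\|_\infty$. Hence the proposition follows once we establish $\Lambda_n\le 1+\tfrac2\pi\log(n+1)$.

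For that, I would estimate the Lebesgue function $\lambda_n(t)=\sum_{j=0}^n|\ell_j(t)|$ directly. Using the barycentric form of the cardinal functions — for the second-kind Chebyshev nodes the barycentric weights are $(-1)^j$ in the interior and $\pm\tfrac12$ at the two endpoints — one gets an expression of the shape $\lambda_n(t)=|\omega(t)|\sum_{j=0}^n|w_j|/|t-t_j|$, where $\omega(t)=\prod_{i=0}^n(t-t_i)$ is the node polynomial, which is proportional to $(1-t^2)U_{n-1}(t)$. The substitution $t=\cos\theta$ turns the nodes into the equally spaced angles $\theta_j=j\pi/n$ and the node polynomial into an elementary trigonometric expression proportional to $\sin\theta\,\sin n\theta$, so that $\lambda_n(\cos\theta)$ becomes $|\sin\theta\,\sin n\theta|$ times $\sum_j|w_j|/|\cos\theta-\cos\theta_j|$. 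By symmetry it is enough to bound this on a single gap $\theta\in(\theta_{\ell+1},\theta_\ell)$: one isolates the $O(1)$ contribution of the one or two nodes nearest $\theta$ and bounds the remaining sum by comparison with an integral of $\csc$-type, which is exactly what produces the $\tfrac2\pi\log(n+1)$ term.

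I expect the genuine difficulty to lie entirely in this last estimate: getting the \emph{sharp} leading constant $2/\pi$ (rather than some weaker $O(\log n)$ bound) and verifying the precise form $1+\tfrac2\pi\log(n+1)$ requires care in the sum-to-integral comparison near the singular node and — because of the halved endpoint weights — at $t=\pm1$, together with a check that the maximum of $\lambda_n$ is indeed attained in the outermost gap. The reduction in the first paragraph is, by contrast, entirely routine. If a fully self-contained derivation of this Lebesgue-constant bound is longer than is warranted here, one may instead invoke the classical estimate $\Lambda_n\le\tfrac2\pi\log(n+1)+1$ for second-kind Chebyshev points and conclude via the three-term inequality above.
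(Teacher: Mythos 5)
The paper offers no proof of this proposition---it is quoted directly from \cite[Theorem 16.1]{Trefethen-2013}---and your argument is precisely the standard one behind that theorem: the identity $f-p_n=(f-q)-L_n(f-q)$ reduces the claim to the Lebesgue-constant bound $\Lambda_n\le 1+\tfrac{2}{\pi}\log(n+1)$ for Chebyshev points of the second kind (the classical Ehlich--Zeller estimate, Theorem 15.2 of the same reference), and the constants match since $1+\Lambda_n\le 2+\tfrac{2}{\pi}\log(n+1)$. Your reduction is correct, and leaving the sharp Lebesgue-constant estimate to citation is no less than what the paper itself does.
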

Specifically, as $\left(2+\tfrac{2}{\pi} \log(n+1)\right)< 10$ for all $n\leq 200000$, this proposition says that in every application considered in this paper, interpolants on Chebyshev points lose at most one digit of accuracy compared to the best polynomial approximant of the same degree.

\subsection{Hermite interpolants and optimal one-sided approximation}

In some applications it is imperative that the computed approximate optimal solution $\vx^*$ to \eqref{eq:SILP} be feasible (and not just approximately feasible).
However, because Lagrange interpolants oscillate around $f$, the polynomial approximation of  $\vA(t)\vx^*$ might also oscillate around the true function, and as a result, might be slightly infeasible to \eqref{eq:SILP}. If this is undesirable, and $\vx$ is componentwise nonnegative, then this problem can be avoided by using \emph{one-sided approximations}, that is, polynomial approximants that are always below (or above) the true function. An elementary approach is to replace the approximant $p_n$ of Proposition \ref{thm:prop1} by $(p_n-\varepsilon)$, or $(p_n+\varepsilon)$ for upper approximations, where $\varepsilon = 4V_{f^{(k)}}/(\pi k(n-k)^k)$. With the help of Proposition \ref{thm:prop2} one can argue that this is also a near-best one-sided approximation in the uniform norm.

If the derivatives of $f$ are of constant sign, we can also use the best one-sided approximation of $f$ in the $L_1$-norm $\|f\|_1 \defeq \int_{-1}^{1}|f(t)|dt$, which can be characterized as an (Hermite) interpolant at the zeros of certain orthogonal polynomials. These zeros depend only on the degree of the approximation, but not the approximated function $f$ itself. The following proposition summarizes the characterization of these polynomial approximants. (See, for example, \cite[Section 1.4]{DunklXu-2001} for a definition of the Legendre and Jacobi polynomials referred to in the next Proposition.)

\begin{proposition}[\cite{BojanicDeVore-1966}]
	\label{thm:BojanicDeVore}
Assume that $f$ is a continuous function on $[-1,1]$ whose $(n+1)$-st derivative $f^{(n+1)}$ is nonnegative on $(-1,1)$.
\begin{enumerate}
\item If $n=2k-1$ is \emph{odd}, let $t_1,\dots,t_k$ be the zeros of the Legendre polynomial of degree $k$. Then the degree-$n$ polynomial of best approximation of $f$ \emph{from below} in the $L_1$ norm is the unique polynomial $p_n$ satisfying
\[ p_n(t_\ell) = f(t_\ell)\;\text{ and }\; p_n'(t_\ell) = f'(t_\ell), \quad \ell=1,\dots,k. \]
\item If $n=2k$ is \emph{even}, let $t_1,\dots,t_k$ be the zeros of the Jacobi polynomial $P_k^{(0,1)}$. Then the degree-$n$ polynomial of best approximation of $f$ \emph{from below} in the $L_1$ norm is the unique polynomial $p_n$ satisfying
\[ p_n(-1) = f(-1) \quad\text{and}\quad p_n(t_\ell) = f(t_\ell)\;\text{ and }\; p_n'(t_\ell) = f'(t_\ell), \quad \ell=1,\dots,k. \]
\end{enumerate}
\end{proposition}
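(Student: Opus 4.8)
The plan is to reduce the problem to an infinite linear program and produce a positive quadrature rule as the optimality certificate. Since every admissible polynomial satisfies $p_n\le f$ on $[-1,1]$, the error collapses to $\|f-p_n\|_1=\int_{-1}^1\!\big(f(t)-p_n(t)\big)\,dt=\int_{-1}^1\! f-\int_{-1}^1\! p_n$, so minimizing the $L_1$ error from below is the same as \emph{maximizing} the linear functional $p_n\mapsto\int_{-1}^1 p_n(t)\,dt$ over the polynomials of degree at most $n$ lying below $f$ on $[-1,1]$. A positive quadrature rule exact on this degree range then furnishes both an upper bound for this functional and, once the bound is matched, the sought minimizer; this quadrature rule is, in effect, the dual solution.

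First I would invoke the appropriate Gaussian-type rule. When $n=2k-1$, take the $k$-point Gauss--Legendre rule $\int_{-1}^1 g=\sum_{\ell=1}^k w_\ell\,g(t_\ell)$: it is exact for all polynomials of degree $\le 2k-1$, its weights $w_\ell$ are strictly positive, and its nodes are precisely the zeros of the degree-$k$ Legendre polynomial. When $n=2k$, use instead the $(k+1)$-point Gauss--Radau rule with one node fixed at $-1$: it is exact for degree $\le 2k$, has positive weights, and its $k$ free nodes are the zeros of the Jacobi polynomial $P_k^{(0,1)}$. In either case, applying exactness to an admissible $p_n$ and using $p_n(t_\ell)\le f(t_\ell)$ at every node gives $\int_{-1}^1 p_n=\sum_\ell w_\ell\,p_n(t_\ell)\le\sum_\ell w_\ell\,f(t_\ell)$ — together with the term $w_0 p_n(-1)\le w_0 f(-1)$ in the even case — an upper bound on the objective valid for every admissible $p_n$ and independent of the particular polynomial.

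Next I would show that the Hermite interpolant $p_n$ described in the statement attains this bound. It interpolates $f$ with multiplicity two at each interior node $t_\ell$ (and simply at $-1$ in the even case), so the Hermite remainder formula yields $f(t)-p_n(t)=\frac{f^{(n+1)}(\xi_t)}{(n+1)!}\,\omega(t)$, where $\omega(t)=\prod_{\ell=1}^k (t-t_\ell)^2$ in the odd case and $\omega(t)=(t+1)\prod_{\ell=1}^k (t-t_\ell)^2$ in the even case. The polynomial $\omega$ is nonnegative on $[-1,1]$ and $f^{(n+1)}\ge 0$ by hypothesis, so $f-p_n\ge 0$ on $[-1,1]$ and $p_n$ is admissible; since $p_n$ agrees with $f$ at every node, $\int_{-1}^1 p_n=\sum_\ell w_\ell f(t_\ell)$ (with the endpoint term added in the even case), so $p_n$ meets the upper bound and is a best approximant from below. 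For uniqueness, if $q_n$ is another admissible degree-$n$ polynomial attaining the optimum, then $\sum_\ell w_\ell\big(f(t_\ell)-q_n(t_\ell)\big)=\int_{-1}^1(f-q_n)=\int_{-1}^1(f-p_n)=0$ (plus the nonnegative endpoint term), and positivity of the weights forces $q_n(t_\ell)=f(t_\ell)$ at all nodes; at each interior node the nonnegative function $f-q_n$ then has an interior minimum equal to $0$, so $q_n'(t_\ell)=f'(t_\ell)$ as well, and these $2k$ conditions (respectively $2k$ interior conditions together with $q_n(-1)=f(-1)$) determine a unique polynomial of degree $n$, namely $p_n$.

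I expect the main obstacle to be careful bookkeeping rather than a single deep step: one must match each parity with exactly the right classical quadrature, confirm that its nodes are the stated Legendre/Jacobi zeros and that its weights are positive, and — in the even case — select the fixed endpoint $-1$ (not $+1$) so that $\omega$ keeps a constant sign on $[-1,1]$, which is the one genuinely parity-specific point. A secondary technical care is that the hypothesis only guarantees $f^{(n+1)}$ exists and is nonnegative on the open interval, so the Hermite remainder formula must be justified under this weak regularity (or $f$ replaced by a sequence of smooth functions with nonnegative $(n+1)$-st derivative converging to it in $L_1$, with the conclusion passed to the limit).
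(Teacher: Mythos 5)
The paper does not prove this proposition; it is imported verbatim from Bojanic--DeVore with only a citation, so there is no internal proof to compare against. Your argument is correct and is essentially the classical proof of that result: reduce best one-sided $L_1$ approximation to maximizing $\int_{-1}^1 p_n$ over $\{p_n \le f\}$, bound this functional by the $k$-point Gauss--Legendre rule (odd case) or the Gauss--Radau rule with fixed node at $-1$ (even case), whose positive weights and exactness through degree $n$ give the upper bound $\sum_\ell w_\ell f(t_\ell)$, and then verify via the Hermite remainder $f-p_n = \frac{f^{(n+1)}(\xi_t)}{(n+1)!}\,\omega(t)$ with $\omega \ge 0$ on $[-1,1]$ that the stated interpolant is admissible and attains the bound. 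You correctly identify the two parity-sensitive points (the free Radau nodes being the zeros of $P_k^{(0,1)}$, and the need to fix the endpoint at $-1$ rather than $+1$ so that $\omega$ keeps one sign), and your uniqueness argument --- positive weights force nodal agreement, interior minima of $f-q_n$ force derivative agreement, and the resulting $n+1$ Hermite conditions determine the polynomial --- is sound. The only loose end, which you flag yourself, is justifying the remainder formula under the stated regularity ($f^{(n+1)}$ existing only on the open interval); the mean-value form of the divided difference covers this, or one can smooth $f$ and pass to the limit as you suggest.
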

\noindent Analogous characterizations of best approximations \emph{from above} are also given in \cite{BojanicDeVore-1966}.

\section{Sum-of-squares interpolants}\label{sec:sos-interpolants}
We say that a polynomial is \emph{sum-of-squares} if it can be written as a (finite) sum of squared polynomials. Specifically, we write $p\in SOS_{2k}$ if $p$ is a polynomial (of degree at most $2k$) that can be written as a sum of squares of polynomials of degree $k$. It is well-known \cite{PolyaSzego-1976b} that a univariate polynomial $p$ of degree $2k$ is nonnegative on the entire real line if and only if $p\in SOS_{2k}$. Similarly, a polynomial $p$ of degree $n$ is nonnegative over $[-1,1]$ if and only if it can be written as a weighted sum of squared polynomials \cite{Lukacs-1918}, either in the form of
\begin{equation}
p(t)=(1+t)q(t)+(1-t)r(t), \quad q\in SOS_{2k-2},\; s\in SOS_{2k-2} \qquad \text{if }n=2k-1,\label{eq:wsos-odd}
\end{equation}
or in the form
\begin{equation}
p(t)=(1+t)(1-t)q(t)+s(t), \quad q\in SOS_{2k-2},\; s\in SOS_{2k}, \qquad \text{if }n=2k.\phantom{-1 }\label{eq:wsos-even}
\end{equation}

This, in turn, yields a semidefinite representation of the set of nonnegative polynomials of a fixed degree, using the fact that the cone of sums of squares of functions from any finite dimensional functional space is semidefinite representable \cite{Nesterov-2000,Parrilo-2003}. \revisiontwo{The precise form of this semidefinite representation depends on the bases that the polynomials being squared ($q$, $r$, and $s$) and the sum-of-squares polynomial ($p$) are represented in.}

For the purposes of this paper, we need a representation that uses only the values of $p$ and its derivatives at prescribed interpolation points, so that the polynomial approximations of the functions involved in \eqref{eq:SILP} need not be explicitly constructed, but one may work directly with sampled values of the original functions to be approximated. For Lagrange interpolants at the points $t_0,\dots,t_n$, this is equivalent to representing the squared polynomials in an arbitrary basis, while representing $p$ in the Lagrange basis polynomials corresponding to the interpolation points $t_0,\dots,t_n$ \cite{LofbergParrilo-2004}. \revision{The theorem below shows that for every fixed set of interpolation points, the coefficient vectors of nonnegative polynomials in the interpolant basis are a linear image of the cone of positive semidefinite matrices.} We use the notation $A\bullet B \defeq \sum_{i,j}A_{ij}B_{ij}$ to denote the component-wise (Frobenius) inner product.
\begin{theorem}\label{thm:Lagrange-SOS}
	Let $t_0,\dots,t_{2k}\in\real$ be distinct \revision{interpolation points} and $f_0,\dots,f_{2k}\in\real$ be arbitrary \revision{function values prescribed at these points}. Fix an arbitrary basis $p_0,\dots,p_k$ of polynomials of degree $k$. Then there is a \revision{nonnegative} polynomial $q\in SOS_{2k}$ satisfying $q(t_\ell)=f_\ell$ for each $\ell=0,\dots,2k$ if and only if there exists a $(k+1)\times(k+1)$ positive semidefinite matrix $X$ satisfying
	\begin{equation}\label{eq:Lagrange-SOS}
	A^{(\ell)} \bullet X = f_\ell \quad \ell=0,\dots,2k,\qquad \text{where}\quad A^{(\ell)}_{ij} = p_i(t_\ell) p_j(t_\ell).
	\end{equation}
\end{theorem}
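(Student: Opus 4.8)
The plan is to reduce the statement to the classical correspondence between sum-of-squares polynomials and positive semidefinite Gram matrices, expressed in the chosen basis $p_0,\dots,p_k$, and then to compose that correspondence with the linear evaluation maps $q\mapsto q(t_\ell)$. The central observation I would isolate first is the following lemma: a polynomial $q$ of degree at most $2k$ lies in $SOS_{2k}$ if and only if there is a $(k+1)\times(k+1)$ positive semidefinite matrix $X$ with $q(t)=\sum_{i,j=0}^{k} X_{ij}\,p_i(t)p_j(t)$ for all $t$. For the forward implication, write $q=\sum_r g_r^2$ with each $g_r$ of degree at most $k$; since $p_0,\dots,p_k$ is a basis of the space of polynomials of degree at most $k$, expand $g_r=\sum_i c_{r,i}p_i$, so that $q(t)=\sum_{i,j}\bigl(\sum_r c_{r,i}c_{r,j}\bigr)p_i(t)p_j(t)$, and the Gram matrix $X\defeq\sum_r c_r c_r^\T$ is positive semidefinite. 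For the converse, given a positive semidefinite $X$, take any decomposition $X=\sum_r c_r c_r^\T$ (for instance from the spectral theorem or a Cholesky factorization), set $g_r\defeq\sum_i c_{r,i}p_i$, and verify that $\sum_r g_r^2$ equals $\sum_{i,j}X_{ij}p_i(t)p_j(t)$; hence this polynomial is in $SOS_{2k}$.

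Next I would record that evaluating such a quadratic form at an interpolation point is exactly a Frobenius inner product: $q(t_\ell)=\sum_{i,j}X_{ij}\,p_i(t_\ell)p_j(t_\ell)=A^{(\ell)}\bullet X$ with $A^{(\ell)}$ as defined in the statement. Consequently, the requirement $q(t_\ell)=f_\ell$ for $\ell=0,\dots,2k$ is, term by term, the system of linear constraints \eqref{eq:Lagrange-SOS}.

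Combining the two steps proves both directions. If a positive semidefinite $X$ satisfying \eqref{eq:Lagrange-SOS} exists, then $q(t)\defeq\sum_{i,j}X_{ij}p_i(t)p_j(t)$ is in $SOS_{2k}$ by the lemma and satisfies $q(t_\ell)=A^{(\ell)}\bullet X=f_\ell$. Conversely, any $q\in SOS_{2k}$ with $q(t_\ell)=f_\ell$ admits, by the lemma, a positive semidefinite Gram matrix $X$ in the basis $p_0,\dots,p_k$, and this $X$ then satisfies \eqref{eq:Lagrange-SOS}. (It is worth noting in passing that since a polynomial of degree at most $2k$ is uniquely determined by its values at the $2k+1$ distinct points $t_0,\dots,t_{2k}$, the polynomial $q$ above is in fact the unique Lagrange interpolant of the data, so the theorem characterizes precisely when that interpolant is a sum of squares; this remark is not needed for the proof.) I do not anticipate a real obstacle here: the argument is a short composition of linear-algebraic facts, essentially a restatement of the observation of \cite{LofbergParrilo-2004}, and the only point deserving care is to use the basis property of $p_0,\dots,p_k$ exactly where it is needed, namely in the forward direction of the lemma to express the squared summands $g_r$; the converse direction of the lemma, and hence the ``if'' part of the theorem, holds for any family of polynomials of degree at most $k$.
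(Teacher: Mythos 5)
Your proof is correct and follows essentially the same route as the paper: the paper omits a direct proof of this theorem, deferring to the more general Hermite version, whose proof is exactly the Gram-matrix argument you give ($q\in SOS_{2k}$ iff $q(t)=p(t)^{\T}Xp(t)$ for some positive semidefinite $X$, followed by the observation that evaluation at $t_\ell$ is the Frobenius pairing with $A^{(\ell)}$). Your parenthetical remark is also accurate --- the uniqueness of the interpolant is not needed for either direction here, whereas the paper's Hermite proof invokes the analogous uniqueness to pass from the pointwise identities back to the identity of polynomials.
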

We omit the proof, as this proposition is subsumed by Theorem \ref{thm:Hermite-SOS} below, which provides a similar characterization for Hermite interpolants. The following is a generalization of both Theorem \ref{thm:Lagrange-SOS} above and (the main, unnumbered, results of) Sections 3.2 and 3.3 of \cite{deKlerketal-2006}.

\begin{theorem}\label{thm:Hermite-SOS}
	Let $t_1, \dots, t_k \in \real$ be distinct \revision{interpolation points}, let $\revision{m_1},\dots,m_k$ and $d$ be nonnegative integers satisfying $2d+1=\sum_{\ell=1}^{k} (m_\ell+1)$, and let $f_\ell^{(m)}\in\real$ be arbitrary prescribed values of the $m$th derivative at $t_\ell$ for every $\ell=1,\dots,k$ and $m=0,\dots,m_\ell$. Also fix an arbitrary basis $p_0,\dots,p_d$ of polynomials of degree $d$. Then there is some \revision{nonnegative} polynomial $q\in SOS_{2d}$ satisfying
\[q^{(m)}(t_\ell) = f_\ell^{(m)} \qquad \ell=1,\dots,k,\quad m=0,\dots,m_\ell\]
if and only if there exists a \revisiontwo{$(d+1)\times (d+1)$} positive semidefinite matrix $X$ satisfying
\begin{equation}\label{eq:Hermite-SOS}
A^{(\ell,m)} \bullet X = f_\ell^{(m)} \quad \ell=0,\dots,2k,\qquad \text{where}\quad A^{(\ell,m)}_{ij} = \tfrac{\mathrm{d}^m}{\mathrm{d}t^m}(p_i(t) p_j(t))\big\rvert_{t=t_\ell}.
\end{equation}
\end{theorem}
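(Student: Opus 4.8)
The plan is to prove both implications via the standard Gram-matrix representation of sum-of-squares polynomials, combined with the linearity of differentiation; the only work beyond \cite{LofbergParrilo-2004,deKlerketal-2006} is the bookkeeping needed to carry the derivative (Hermite) conditions through the argument. Note first that the balance condition $2d+1=\sum_{\ell=1}^{k}(m_\ell+1)$ plays no role in the equivalence itself — it only guarantees that the prescribed Hermite data determines a polynomial of degree at most $2d$ uniquely, so that the feasibility system is ``square'' — and I would not invoke it in the proof. A useful preliminary observation is that for any symmetric $(d+1)\times(d+1)$ matrix $X$ the polynomial $q_X(t)\defeq\sum_{i,j}X_{ij}\,p_i(t)p_j(t)$ has degree at most $2d$; moving the derivative and evaluation inside the finite sum gives $q_X^{(m)}(t_\ell)=\sum_{i,j}X_{ij}\,\tfrac{\mathrm{d}^m}{\mathrm{d}t^m}\bigl(p_i(t)p_j(t)\bigr)\big\rvert_{t=t_\ell}=A^{(\ell,m)}\bullet X$ (using that $A^{(\ell,m)}$ is symmetric, as is $X$); and if $X\succcurlyeq\vzero$, then writing $X=\sum_r v_r v_r^{\T}$ and $s_r\defeq\sum_{i=0}^d v_{r,i}p_i$ shows $q_X=\sum_r s_r^2$ with each $s_r$ of degree at most $d$, i.e. $q_X\in SOS_{2d}$.

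For the ``if'' direction, suppose $X\succcurlyeq\vzero$ satisfies \eqref{eq:Hermite-SOS}. Fix any decomposition $X=\sum_{r}v_r v_r^{\T}$ with $v_r\in\real^{d+1}$ (for instance an eigendecomposition), set $s_r\defeq\sum_{i=0}^{d}v_{r,i}\,p_i$ and $q\defeq\sum_{r}s_r^2$. Expanding the squares yields $q=q_X$, so $q\in SOS_{2d}$ by the preliminary observation, and $q^{(m)}(t_\ell)=A^{(\ell,m)}\bullet X=f_\ell^{(m)}$ for all $\ell=1,\dots,k$ and $m=0,\dots,m_\ell$; hence $q$ is the required (nonnegative) sum-of-squares interpolant.

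For the ``only if'' direction, suppose $q\in SOS_{2d}$ satisfies the Hermite interpolation conditions. By definition of $SOS_{2d}$ we may write $q=\sum_{r}s_r^2$ with each $s_r$ of degree at most $d$; since $p_0,\dots,p_d$ is a basis of the $(d+1)$-dimensional space of polynomials of degree at most $d$, expand $s_r=\sum_{i=0}^{d}v_{r,i}\,p_i$, collect the coefficients into $v_r\in\real^{d+1}$, and put $X\defeq\sum_{r}v_r v_r^{\T}$, a $(d+1)\times(d+1)$ positive semidefinite matrix. Then $q=q_X$, and by the preliminary observation $f_\ell^{(m)}=q^{(m)}(t_\ell)=A^{(\ell,m)}\bullet X$, so $X$ satisfies \eqref{eq:Hermite-SOS}.

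I do not expect any substantial obstacle: the proof is essentially a change-of-basis exercise, and the two points worth stating carefully are that ``$SOS_{2d}$'' refers to sums of squares of polynomials of degree \emph{at most} $d$ (which is exactly what forces the Gram matrix to have size $d+1$), and that, because the ``only if'' hypothesis already assumes $q\in SOS_{2d}$ rather than mere nonnegativity, no Luk\'acs- or P\'olya--Szeg\H{o}-type representation theorem for nonnegative univariate polynomials is needed. Finally, Theorem \ref{thm:Lagrange-SOS} is recovered as the special case in which every $m_\ell=0$, so that the $t_\ell$ are $2d+1$ distinct simple interpolation points and $d$ plays the role of the $k$ of that theorem.
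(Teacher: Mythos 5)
Your proof is correct and rests on the same core idea as the paper's: the Gram-matrix characterization $q(t)=p(t)^{\T}Xp(t)=(p(t)p(t)^{\T})\bullet X$ of membership in $SOS_{2d}$, followed by componentwise differentiation to turn the Hermite conditions into the linear equations $A^{(\ell,m)}\bullet X=f_\ell^{(m)}$. The one substantive divergence is in how the final equivalence is closed. The paper's proof appeals to the fact that the prescribed derivative data and the degree bound determine $q$ uniquely --- this is exactly where the balance condition $2d+1=\sum_{\ell}(m_\ell+1)$ enters --- in order to pass from the identity holding at the prescribed points and orders to the identity holding everywhere. You instead construct $q$ from $X$ (and $X$ from $q$) explicitly, so that $q=q_X$ holds by construction rather than by an interpolation-uniqueness argument, and you correctly observe that the balance condition is then not needed for the equivalence itself, only for the intended use of the theorem (a square, uniquely solvable Hermite system). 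Your version is slightly more careful and marginally more general; the paper's is more condensed. Your closing remarks --- that $SOS_{2d}$ means sums of squares of polynomials of degree at most $d$ (forcing the Gram matrix to be $(d+1)\times(d+1)$), that no Luk\'acs-type representation theorem is needed because the hypothesis already assumes $q\in SOS_{2d}$, and that Theorem \ref{thm:Lagrange-SOS} is the special case $m_\ell\equiv 0$ --- all match the paper's intent; you also implicitly (and correctly) read the index range in \eqref{eq:Hermite-SOS} as $\ell=1,\dots,k$, $m=0,\dots,m_\ell$ rather than the $\ell=0,\dots,2k$ that appears there as a typographical carryover from the Lagrange case.
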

\begin{proof}
Using the shorthand $p(t)$ to denote the column vector $(p_0(t), \dots, p_d(t))^\T$,
$q\in SOS_{2d}$ if and only if there exists some \revisiontwo{$(d+1)\times (d+1)$} positive semidefinite matrix $X$ with which
$q(t) = p(t)^\T X p(t) = (p(t)p(t)^\T) \bullet X$ for every $t\in\real$. Differentiating both sides of this equation, we obtain
\begin{equation}\label{eq:dmdq} \tfrac{\mathrm{d}^m}{\mathrm{d}t^m}q(t) = \tfrac{\mathrm{d}^m}{\mathrm{d}t^m}(p(t)p(t)^\T) \bullet X \qquad \forall\,t,
\end{equation}
where the differentiation of the matrix on the right-hand side is understood componentwise.

Since the prescribed derivative values and the degree determine $q$ uniquely, equation \eqref{eq:dmdq} holds for every $m\geq 0$ and every $t\in\real$ if and only if it holds for each $t_\ell$, $\ell=0,\dots,k$ with $m$ from $0$ up to $m_\ell$, which is precisely the system of equations \eqref{eq:Hermite-SOS} in our claim.
\end{proof}

\section{Scaling}\label{sec:scaling}
One of the key difficulties in working with polynomials of high degree is that numerical difficulties arise if the polynomials involved are represented in an unsuitable basis, such as the monomial basis, as it is customary in the sum-of-squares literature. In the representation of Theorem \ref{thm:Lagrange-SOS} one can freely choose both the interpolation points and the basis $p$. The choice of Chebyshev points of the first kind and appropriately scaled Chebyshev polynomials works particularly well. Recall that the \emph{Chebyshev polynomials of the first kind} are the sequence of polynomials of increasing degree defined by the recursion
\begin{equation}\label{eq:ChebyshevPolynomials-1stkind}
T_0(t) = 1, \quad T_1(t) = t, \quad T_i(t) = 2t T_{i-1}(t) - T_{i-2}(t) \quad i=2,3,\dots
\end{equation}

\deletethis{
\begin{theorem}
	Let $t_0,\dots,t_n$ be the Chebyshev points defined in \eqref{eq:ChebyshevPoints-1stkind}, and let $T_i$ be the Chebyshev polynomials given in \eqref{eq:ChebyshevPolynomials-1stkind}. Define the polynomials $p_0,\dots,p_n$ by
	\[ p_0 = (n+1)^{-1/2} T_0; \qquad p_i = ((n+1)/2)^{-1/2} T_i\quad i=1,\dots,n.\]
	Then
	\[ \sum_{\ell=0}^n p_i(t_\ell)p_j(t_\ell) = \begin{cases}1 & \text{if } i=j\\ 0& \text{if } i\neq j\\\end{cases}.\]
\end{theorem}
\begin{proof}
\end{proof}
}

\deletethis{
\begin{proposition}[\protect{\cite[eq.~(3.30)]{GilSeguraTemme-2007}}]
	Let $t_0,\dots,t_n$ be the Chebyshev points defined in \eqref{eq:ChebyshevPoints-1stkind}, and let $T_i$ be the Chebyshev polynomials given in \eqref{eq:ChebyshevPolynomials-1stkind}. Then
	\[ \sum_{\ell=0}^n T_i(t_\ell)T_j(t_\ell) = K_i\delta_{ij},\]
	where $K_0 = n+1$, $K_i = (n+1)/2$ when $i\geq 1$, and $\delta$ is the Kronecker symbol	$\delta_{ij}=\begin{cases}1 & \text{if } i=j\\ 0& \text{if } i\neq j\\\end{cases}$.
\end{proposition}
}

The following lemma states that if we represent the polynomials to be squared in the appropriately scaled Chebyshev basis, and use Chebyshev points as the interpolation points, then the representation \eqref{eq:Lagrange-SOS} is perfectly scaled.

\begin{lemma}\label{lem:orthogonality}
	Let $t_0,\dots,t_{2k}$ be the Chebyshev points given in \eqref{eq:ChebyshevPoints-1stkind} (with $n=2k$), and define $p_0 = \sqrt{\tfrac{1}{2k+1}} T_0$ and $p_i = \sqrt{\tfrac{2}{2k+1}}T_i$ for $i=1,\dots,k$, where the $T_i$ are the Chebyshev polynomials given in \eqref{eq:ChebyshevPolynomials-1stkind}. Then the vectors $(p_i(t_0), \dots, p_i(t_{2k}))$ for $i=0,\dots,k$ form an orthonormal system.
\end{lemma}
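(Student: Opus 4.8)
The plan is to reduce the orthonormality claim to a standard discrete orthogonality relation for Chebyshev polynomials sampled at the Chebyshev points of the first kind. Concretely, the vectors $(p_i(t_0),\dots,p_i(t_{2k}))$ and $(p_j(t_0),\dots,p_j(t_{2k}))$ have inner product $\sum_{\ell=0}^{2k} p_i(t_\ell)p_j(t_\ell)$, which by the definitions of $p_i$ and $p_j$ is a fixed normalizing constant times $\sum_{\ell=0}^{2k} T_i(t_\ell)T_j(t_\ell)$. So the whole statement follows once we establish that, with $n=2k$ and $t_\ell = \cos((\ell+1/2)\pi/(n+1))$, one has $\sum_{\ell=0}^{n} T_i(t_\ell)T_j(t_\ell) = 0$ for $i\neq j$ with $0\le i,j\le k$, while $\sum_{\ell=0}^{n} T_0(t_\ell)^2 = n+1$ and $\sum_{\ell=0}^{n} T_i(t_\ell)^2 = (n+1)/2$ for $1\le i\le k$. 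Plugging these into the scaled definitions makes all the diagonal inner products equal to $1$ and all off-diagonal ones equal to $0$.

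First I would write $T_i(\cos\theta) = \cos(i\theta)$, so that with $\theta_\ell = (\ell+1/2)\pi/(n+1)$ we are looking at $\sum_{\ell=0}^{n}\cos(i\theta_\ell)\cos(j\theta_\ell)$. Using the product-to-sum identity $\cos a\cos b = \tfrac12(\cos(a-b)+\cos(a+b))$, this splits into $\tfrac12\sum_\ell \cos((i-j)\theta_\ell) + \tfrac12\sum_\ell\cos((i+j)\theta_\ell)$. The core computation is therefore the evaluation of $S(r) \defeq \sum_{\ell=0}^{n}\cos(r\theta_\ell) = \sum_{\ell=0}^{n}\cos\big(r\pi(2\ell+1)/(2(n+1))\big)$ for integer $r$. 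This is a real part of a finite geometric series: $S(r) = \mathrm{Re}\,\sum_{\ell=0}^{n} e^{\mathrm{i} r\pi(2\ell+1)/(2(n+1))} = \mathrm{Re}\,\big(e^{\mathrm{i} r\pi/(2(n+1))}\tfrac{e^{\mathrm{i} r\pi} - 1}{e^{\mathrm{i} r\pi/(n+1)} - 1}\big)$ when $e^{\mathrm{i} r\pi/(n+1)}\ne 1$. One checks that $S(r) = n+1$ when $r$ is a multiple of $2(n+1)$, $S(r)=0$ when $r$ is a nonzero multiple of $n+1$ that is not a multiple of $2(n+1)$ (the numerator $e^{\mathrm{i} r\pi}-1$ is then zero while the denominator is nonzero), and $S(r) = 0$ for all other integers $r$ because the geometric sum telescopes to something whose real part vanishes — indeed $S(r)$ is, up to a sign, a ratio of sines $\sin(r\pi)/\big(2\sin(r\pi/(2(n+1)))\big)$ shifted appropriately, and $\sin(r\pi)=0$. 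I would state this cleanly as: for $0\le |r|\le 2n+1$, $S(r)=0$ unless $r=0$, in which case $S(0)=n+1$; that range covers everything we need since $|i\pm j|\le 2k = n$.

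With $S$ in hand the conclusion is immediate: for $i\ne j$ with $0\le i,j\le k$ we have $0<|i-j|\le k\le n$ and $0<i+j\le 2k=n$, so both $S(i-j)$ and $S(i+j)$ vanish and the off-diagonal inner product is $0$. For $i=j=0$ the sum is $\tfrac12 S(0)+\tfrac12 S(0) = n+1$. For $i=j$ with $1\le i\le k$ we get $\tfrac12 S(0) + \tfrac12 S(2i) = \tfrac12(n+1) + 0$ since $0<2i\le 2k=n$. Multiplying by the square of the scaling factors $\sqrt{1/(n+1)}$ and $\sqrt{2/(n+1)}$ respectively normalizes every diagonal entry to $1$.

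I do not expect any serious obstacle here; this is a routine exponential-sum computation. The one point requiring a little care is bookkeeping the index ranges to be sure that every argument $r = i\pm j$ appearing is strictly between $-2(n+1)$ and $2(n+1)$ and equals zero only in the diagonal case, so that the "unless $r\equiv 0$" alternatives in the evaluation of $S(r)$ never accidentally fire; this is exactly why the hypothesis restricts $i,j$ to $\{0,\dots,k\}$ rather than $\{0,\dots,n\}$, and it is worth remarking on. A secondary minor point is handling the degenerate denominator case $e^{\mathrm{i} r\pi/(n+1)}=1$ separately rather than dividing by zero, which is the standard split in summing geometric series of roots of unity.
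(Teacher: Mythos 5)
Your proof is correct and follows essentially the same route as the paper: both reduce the claim to the discrete orthogonality relation $\sum_{\ell=0}^{n} T_i(t_\ell)T_j(t_\ell) = K_i\delta_{ij}$ (with $K_0=n+1$ and $K_i=(n+1)/2$ for $i\ge 1$) at the Chebyshev points of the first kind, and then track the normalizing constants $\sqrt{1/(2k+1)}$ and $\sqrt{2/(2k+1)}$. The only difference is that the paper simply cites this relation from the literature, whereas you derive it from $T_i(\cos\theta)=\cos(i\theta)$ and a geometric-sum evaluation of $S(r)=\sum_\ell\cos(r\theta_\ell)$, with the correct bookkeeping that $|i\pm j|\le n<2(n+1)$ keeps all off-diagonal terms in the vanishing regime --- a sound, self-contained supplement to what the paper takes as known.
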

\begin{proof}
The statement is an easy consequence of the \emph{discrete orthogonality relation} of Chebyshev polynomials \cite[eq.~(3.30)]{GilSeguraTemme-2007}: for every $n>0$ and $0\leq i,j\leq n$,
\[ \sum_{\ell=0}^n T_i(t_\ell)T_j(t_\ell) = K_i\delta_{ij},\]
where $K_0 = n+1$, $K_i = (n+1)/2$ when $i\geq 1$, and $\delta$ is the Kronecker symbol. 
Applying this identity, we have the following:
\begin{enumerate}
	\item If $i=j=0$, then
	\[ \sum_{\ell=0}^{2k} p^2_0(t_\ell) = \frac{1}{2k+1}\sum_{\ell=0}^{2k}T_0^2(t_\ell) = 1. \]
	\item If $i>j=0$, then
\[ \sum_{\ell=0}^{2k} p_i(t_\ell)p_j(t_\ell) = \frac{\sqrt{2}}{2k+1}\sum_{\ell=0}^{2k}T_i(t_\ell)T_0(t_\ell) = 0. \]
	\item If $i\geq 1$ and $j \geq 1$, then
	\[ \sum_{\ell=0}^{2k} p_i(t_\ell)p_j(t_\ell) = \frac{2}{2k+1}\sum_{\ell=0}^{2k}T_i(t_\ell)T_j(t_\ell) = \delta_{ij}. \qedhere \]
\end{enumerate}
\end{proof}

 
We can also express this relation in terms of the dual constraints. If $y_\ell$ denotes the dual variable corresponding to the linear equation $A^{(\ell)} \bullet X = f_\ell$ in \eqref{eq:Lagrange-SOS}, then dual constraint corresponding to the primal variable $X$ is that the matrix
\begin{equation}
 Y(y) \defeq \sum_{\ell=0}^{2k} y_\ell p(t_\ell)p(t_\ell)^\T = P^\T\operatorname{diag}(y) P, \label{eq:dualconstr}
\end{equation}
where $P = (p_i(t_\ell))_{\ell,i}$, is positive semidefinite. Choosing each $p_i$ and $t_\ell$ as suggested by Lemma \ref{lem:orthogonality} we obtain a matrix $P$ with orthonormal columns. Factoring, or computing eigenvalues of $Y(y)$ for different values of $y$ is therefore not numerically challenging even if the number of interpolation points (and the degree of the polynomials involved) is in the thousands. (See Section \ref{sec:numerical} for numerical examples.) An additional advantage is that algorithms that compute the values of Chebyshev polynomials at Chebyshev points to arbitrary accuracy are readily available \cite{Clenshaw-1955,Trefethen-2013,chebfun}; also note that these values $T_i(t_\ell)$, and therefore the coefficient matrices $A^{(\ell)}$ in \eqref{eq:Lagrange-SOS} need only be computed once, offline, for every value of $n$.

The case of general interpolation points is in principle similarly easy. For every set of points $t_0,\dots,t_{2k}$ one can find a basis $p_0,\dots,p_k$ of polynomials of degree $k$ satisfying the discrete orthogonality relation
\[ \sum_{\ell=0}^n p_i(t_\ell)p_j(t_\ell) = \delta_{ij},\]
by taking an arbitrary basis, and applying an orthogonalization procedure, e.g., QR factorization \cite[Sec.~19]{Higham-2002}. It is important to note that the representation \eqref{eq:Lagrange-SOS} does \emph{not} require that the basis $p_0,\dots,p_k$ is explicitly identified or expressed in any particular basis, only the values of the basis polynomials are needed at the interpolation points. Throughout the orthogonalization procedure one can work directly with the values of the basis polynomials at the prescribed points. For example, the initial basis can be the Chebyshev polynomial basis, as in that basis stable evaluation of the basis polynomials is easy \cite{Clenshaw-1955,chebfun}, and then the orthogonalization procedure applied to the vectors of function values directly computes the values of $p_i(t_\ell)$ for each interpolation point $t_\ell$ for the orthogonalized basis $p$.

The same procedure is applicable to the weighted-sum-of-squares representations \eqref{eq:wsos-odd} and \eqref{eq:wsos-even} of polynomials that are nonnegative over an interval. In the dual constraint \eqref{eq:dualconstr}, the $(\ell,i)$-th entry $p_i(t_\ell)$ of the coefficient matrix $P$ is replaced by $w(t_\ell)^{1/2}p_i(t_\ell)$, where $w$ is the weight polynomial (in the case of polynomials over $[-1,1]$, this is the polynomial $1-t$, $1+t$, or $1-t^2$, depending on the parity of the degree), and it is this weighted coefficient matrix that needs to be orthogonalized for a perfectly scaled representation of the weighted-sum-of-squares constraint.

\section{High-degree representation of low-degree polynomials}\label{sec:upsampling}

If the polynomial approximations of \eqref{eq:SILP} involve interpolants of different degrees, it may be necessary to lift the lower degree interpolants into the space of higher degree ones. When polynomials are represented in the monomial basis or in an orthogonal basis, this is straightforward: the coefficients of the higher degree terms simply need to be set to zero. The analogous constraint for interpolants is that the low-degree polynomial must take consistent values at the interpolation points used to represent the high-degree polynomials. Since the evaluation of a polynomial at a given point is a linear functional, the mapping $B\colon\real^{n+1}\mapsto\real^{N+1}$ from the values of a degree-$n$ polynomial on a given set of $n+1$ interpolation points to the values on a larger set of $N+1$ interpolation points is linear, therefore this mapping can be represented by a system of linear equality constraints.

For example, suppose that the infinite constraint set can be written in the form
\[ p(t)-q(t) \geq 0\quad \forall\,t\in[-1,1],\]
where $q$ is a given degree-$N$ polynomial represented as an interpolant on the $N+1$ Chebyshev points of the second kind \eqref{eq:ChebyshevPoints-2ndkind}, whereas $p$ is an degree-$n$ polynomial to be optimized, with $n<N$, represented as an interpolant on $n+1$ Chebyshev points. To represent this constraint using sum-of-squares interpolants, $p-q$ needs to be a degree-$N$ sum-of-squares interpolant, and the variable $p$ must be ``upsampled'' and represented as a degree-$N$ interpolant.

The coefficient matrix of these constraints can be determined using interpolation formulae. It is important to note that although the previous section shows that the sum-of-squares representation of nonnegative interpolants can always be scaled, regardless of the location of the interpolation points, the problem of polynomial interpolation is inherently ill-conditioned in general, meaning that small changes in the values of the degree-$n$ polynomial can result in large changes in the upsampled values \cite{BerrutTrefethen-2004}. On the other hand, if the low-degree polynomial is an interpolant on Chebyshev points, or any other point set with asymptotic density $(1-x^2)^{-1/2}$, then the interpolation problem is well-conditioned; moreover, the coefficients of the upsampling constraints can be computed efficiently and in a numerically stable manner, using barycentric Lagrange interpolation \cite{Higham-2004}. In practice, these computations can be conveniently carried out using the Matlab package \texttt{chebfun}.

It is for this reason that in all our numerical examples in this paper, all polynomials are represented as interpolants using Chebyshev points as interpolation points.

\section{Applications and numerical experiments}\label{sec:numerical}

\revision{
The complete algorithm for the solution of semi-infinite programs given in the form \eqref{eq:SILP} can be summarized as follows:}
\begin{enumerate}[1.]
	\item \revision{Choose a convenient family of interpolation points. If the application does not prescribe them, use Chebyshev points of the second kind defined in \eqref{eq:ChebyshevPoints-2ndkind}.}

	\item \revision{Find a componentwise polynomial approximation $P(t)$ of each component of $A(t)$, expressed as an interpolant, by evaluating $A(\cdot)$ at each interpolation point. Use \mbox{Proposition \ref{thm:prop1}} to compute the number of points that ensure that the interpolants have small uniform approximation error. (In the examples in this paper we always use enough points to obtain a uniform error less than double machine precision.)}
	
	\item \revision{Reformulate the polynomial inequalities $b-P(t)x\geq 0$ as linear semidefinite optimization constraints using Theorem \ref{thm:Lagrange-SOS} (if Lagrange interpolation is used) or Theorem \ref{thm:Hermite-SOS} (for Hermite interpolation).}
	
	\item \revision{If the degree of the components of $P(\cdot)$ is high, use the procedure in Section \ref{sec:scaling} to orthogonalize the semidefinite representation of the polynomial constraints. If Chebyshev points were chosen in Step 1, Lemma \ref{lem:orthogonality} gives the orthonormal representation in closed form, and this step can be omitted.}
	
	\item \revision{Solve the resulting convex optimization problem with a suitable convex optimization method. If the convex constraints $x\in X$ and the objective function $f$ are semidefinite representable (as defined in Section \ref{sec:introduction}), we can use interior point methods for semidefinite programming.}
\end{enumerate}

\revision{Note that as long as the approximation $P(t)$ for $A(t)$ is sufficiently close, the original problem and the polynomial approximation are numerically equivalent. Specifically, infeasibility or unboundedness in \eqref{eq:SILP} is detected in the last step.}

\revision{In the rest of the section we illustrate the method using a number of examples.}

\subsection{Polynomial envelopes}

Our first example demonstrates that the computational infrastructure presented in Sections \ref{sec:sos-interpolants} and \ref{sec:scaling} is indeed capable of handling high-degree polynomials without any numerical difficulties. Consider the following problem: given degree-$d$ polynomials $p_1,\dots,p_m$, find the greatest degree-$n$ polynomial lower approximation of $\min(p_1,\dots,p_m)$, where the minimum is understood pointwise. Formally, we seek the optimal solution to
\begin{equation}
\begin{split}
\textrm{maximize}_p &\quad \int_{-1}^1 p(t) dt\\
\textrm{subject to}\, &\quad p(t)\leq p_i(t)\;\; \forall\,t\in [-1,1]\quad i=1,\dots,m.
\end{split}\label{eq:envelope}
\end{equation}
All polynomials involved can be represented as interpolants on the same $\max(n,d)+1$ points. The decision variables are the function values $p(t_\ell)$, $\ell=1,\dots,\max(n,d)+1$ at the interpolation points $t_\ell$. The nonnegativity of the polynomials $p_i(t)-p(t)$ can be formulated as these polynomials being weighted sums of squares. \revisiontwo{The integral in the objective can be replaced by the sum $\sum_{\ell} p(t_l) w_{\ell}$ with appropriately chosen weights $w_{\ell}$ for an explicit representation as a linear function of the decision variables}. For this example, we assume that $n\geq d$ so that we do not have to worry about the upsampling issue discussed in the previous section.

Random instances were generated by drawing uniformly random integer coefficients from $[-9,9]$ for each $p_i$ represented in the Chebyshev basis. \revisiontwo{For ease of implementation, the dual problem},
\begin{equation}
\begin{split}
\textrm{minimize}_\vy &\quad \sum_{i,\ell} p_i(t_\ell)y_{i\ell}\\
\textrm{subject to}   &\quad \sum_i y_{i\ell}=w_\ell\quad \forall\,\ell,\\
                      &\quad \sum_\ell (1+t_\ell)A^{(\ell)} y_{i\ell} \succcurlyeq 0\quad \forall\,i,\\
                      &\quad \sum_\ell (1-t_\ell)A^{(\ell)} y_{i\ell} \succcurlyeq 0\quad \forall\,i,
\end{split}\label{eq:envelope-dual}
\end{equation}
was solved, after orthogonalizing the matrix inequalities using the procedure outlined in Section \ref{sec:scaling}. We employed three different solvers, Sedumi \cite{sedumi}, SDPT3 version 4 \cite{sdpt3}, and CSDP version 6.2 \cite{csdp}, each running in Matlab 2014a, \revisiontwo{interfaced using the OPTI Toolbox \cite{opti} version 2.20}, to confirm that the semidefinite formulations can indeed be solved with off-the-shelf SDP solvers. A variety of values for the number of polynomials $m$ as well as the degrees $n$ and $d$  were tried. \revisiontwo{The fact that the $\vA^{(\ell)}$ matrices are of rank one can also in principle be exploited by dual solvers \cite{LofbergParrilo-2004}, although the solvers used in this study do not take advantage of this.}

In each case, the optimal polynomial interpolant $p$ was  recovered from the optimal dual solution:  $p(t_\ell)$ is the optimal dual variable corresponding to the linear equality constraint $\sum_i y_{i\ell}=w_\ell$ in \eqref{eq:envelope-dual}.

For solvers that do not handle linear equality constraints well (specifically, those that represent $Ax=b$ as a pair of inequalities $b\leq Ax\leq b$), it is useful to note that the polynomials $p_i$ can be assumed to satisfy $p_i(t_\ell)\leq 0$ for all $i$ and $\ell$, without any loss of generality, which in turn allows us to assume that the primal variables $p(t_\ell)$ in \eqref{eq:envelope} are constrained (redundantly) to be non-positive. With this modification, the equality constraint in the dual problem \eqref{eq:envelope-dual} becomes an inequality $\sum_i y_{i\ell}\leq w_\ell\; \forall\,\ell$.

\begin{example}\label{ex:poly-env}
Figure \ref{fig:polyenv_simple} depicts three quintic polynomials, along with three best polynomial lower approximations of their pointwise minimum. The degrees of the approximating polynomials are 5, 15, and 75, respectively. The 75-degree lower approximation is visually nearly indistinguishable from the minimum of the three polynomials. The computations for this plot were carried out using Sedumi.

\begin{figure}[htb]
	\includegraphics[width=0.5\columnwidth]{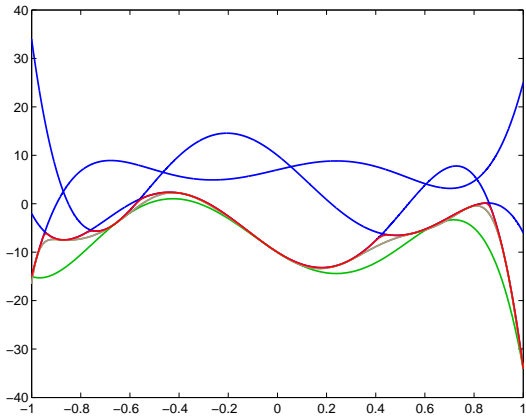}
	\caption{Three polynomials of degree 5, along with their best lower polynomial approximations of degree 5, 15, and 75.}
	\label{fig:polyenv_simple}
\end{figure}

To find the optimal polynomials with the highest numerically possible accuracy, we set the Sedumi accuracy goal \texttt{eps} to zero so that the solver keeps iterating as long as it can make any progress. Figure \ref{fig:polyenv_errors} shows the plot of the difference between $\min_i p_i$ and the three polynomial lower approximations. Only the sections of the plots close to the $x$-axis are shown, in order to demonstrate that the resulting optimal polynomials are computed to sufficiently high accuracy that the points of contact (the points where $\min_i p_i(t) = p(t)$) can be separated, and computed to several digits of accuracy.

\begin{figure}[htb]
	\includegraphics[width=0.45\columnwidth]{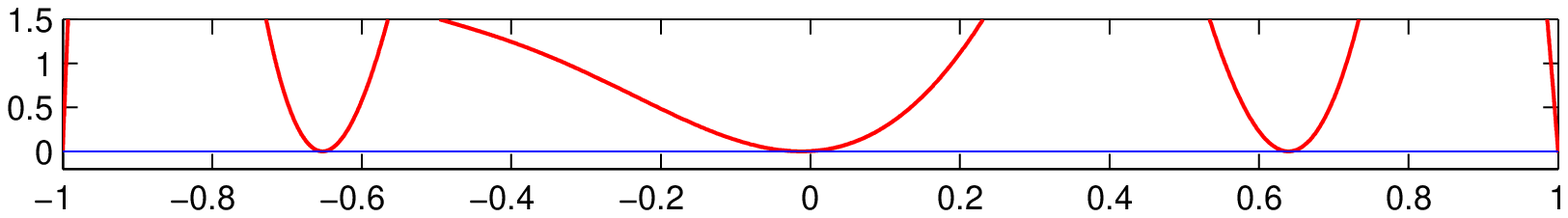} \includegraphics[width=0.45\columnwidth]{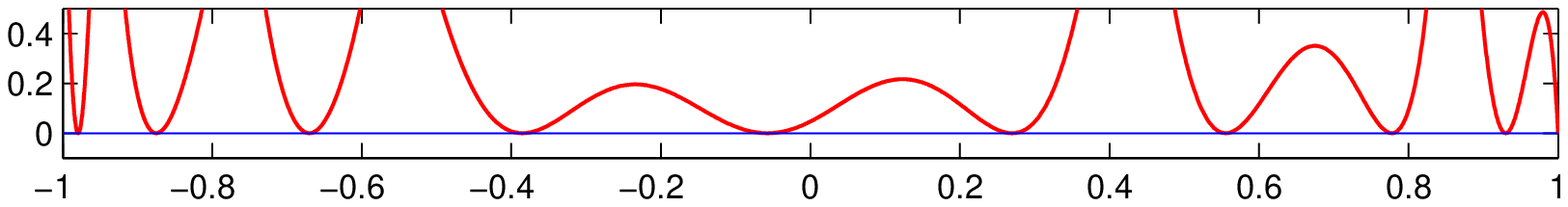}\\
	\includegraphics[width=0.8\columnwidth]{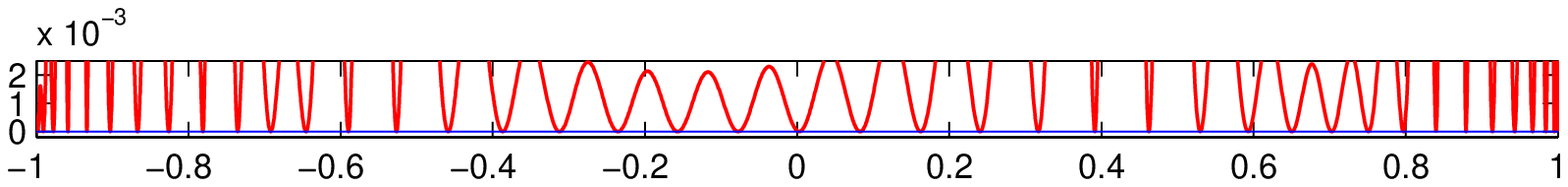}
	\caption{Pointwise difference between the three optimal polynomials and $\min_i p_i$ from Figure \ref{fig:polyenv_simple}. Only the near-zero section of the plots are shown; the number of contact points can be easily read off the diagrams. The polynomials have degrees 5, 15, and 75, respectively.}
	\label{fig:polyenv_errors}
\end{figure}
\end{example}

To test the limits of the approach when applied to polynomials of very high degree, similar problems were solved for higher values of $n$, with the three SDP solvers mentioned above (Sedumi, SDPT3, and CSDP). As before, to obtain the highest possible accuracy, we set tolerances and accuracy goals to zero so that the solvers keep iterating as long as they can make any progress. Otherwise, default parameter settings were used with each solver.

With each solver, as the sizes of the SDPs grow quadratically with the degree of the polynomials involved, the available memory became a bottleneck. Therefore, we reduced the number of constraints to $m=2$, and then increased $n$ as shown in the tables below. Using a standard desktop computer with 32GB RAM, the degree was increased until the solvers ran out of memory, and were unable to solve the problem. The number of nonzeros in the constraint matrix of the semidefinite program, along with the number of iterations, the solver running time, and the final duality gap for each run of Sedumi is shown in Table~\ref{tbl:ex1-highdegree-sedumi}; the same solver statistics (without repeating the problem statistics) for SDPT3 are shown in Table~\ref{tbl:ex1-highdegree-sdpt3}, and for CSDP in Table~\ref{tbl:ex1-highdegree-csdp}. It is apparent from the results that the solvers are able to solve even the largest instances, involving polynomials of degree 1000, without any numerical difficulty, and the memory constraint is the only bottleneck.

\begin{table}
	\centering
	\begin{tabular}{rcccccc}
		\toprule
		$n+1\!\!$ & \# of nonzeros & \# of iterations & solver time [s] & primal inf. & dual inf. & duality gap\\
		\midrule
		100   & 0.5 M    & 23    & \phantom{1221}5   & $5.0\cdot10^{-10}$ & $3.0\cdot10^{-14}$ & $3.25\cdot10^{-14}$\\
		200   & 4.0 M    & 21    & \phantom{121}44   & $3.5\cdot10^{-10}$ & $1.5\cdot10^{-13}$ & $9.04\cdot10^{-13}$\\
		300   & 13.5 M   & 24    & \phantom{12}215   & $1.7\cdot10^{-10}$ & $1.4\cdot10^{-14}$ & $6.23\cdot10^{-15}$\\
		400   & 32.1 M   & 21    & \phantom{12}547   & $2.3\cdot10^{-10}$ & $1.7\cdot10^{-13}$ & $6.01\cdot10^{-14}$\\
		500   & 62.6 M   & 19    & \phantom{1}1128   & $1.1\cdot10^{-9}$ & $9.1\cdot10^{-13}$ & $2.43\cdot10^{-13}$\\
		600   & 108 M    & 20    & \phantom{1}2456   & $2.6\cdot10^{-9}$ & $2.0\cdot10^{-12}$ & $4.56\cdot10^{-13}$\\
		700   & 171 M    & 21    & \phantom{1}4847   & $4.8\cdot10^{-10}$ & $3.0\cdot10^{-13}$ & $6.19\cdot10^{-14}$\\
		800   & 256 M    & 21    & \phantom{1}8670   & $7.2\cdot10^{-10}$ & $3.2\cdot10^{-13}$ & $5.76\cdot10^{-14}$\\
		900   & 321 M    & 20    & 12969  & $1.9\cdot10^{-9}$ & $1.1\cdot10^{-12}$ & $1.80\cdot10^{-13}$\\
		1000  & 501 M    & 19    & 19875  & $5\cdot10^{-9}$ & $2.8\cdot10^{-12}$ & $4.19\cdot10^{-13}$\\		
		\bottomrule\\
	\end{tabular}
	\caption{Solver statistics from Sedumi from the solution of Example 1. Instances of the optimization problem \eqref{eq:envelope}-\eqref{eq:envelope-dual} was solved for $m=2$, $d=5$, and different values of the degree $n$. (That is, $n+1$ in the heading is the number of interpolation points.) M in the second column stands for millions. The last three columns show the relative infeasibility of the optimal primal and dual solutions, and the relative duality gap. Larger problems ($n+1 \geq 1100)$ could not be solved because of memory constraints.}
	\label{tbl:ex1-highdegree-sedumi}
\end{table}

\begin{table}
	\centering
	\begin{tabular}{rccccc}
		\toprule
		$n+1\!\!$ & \# of iterations & solver time [s] & primal inf. & dual inf. & duality gap\\
		\midrule
		100   & 24  & \phantom{1069}4 & $6.3\cdot10^{-10}$ & $1.1\cdot10^{-11}$ & $5.2\cdot10^{-12}$\\
		200   & 25  & \phantom{106}25 & $1.4\cdot10^{-9}$ & $3.9\cdot10^{-12}$ & $5.7\cdot10^{-12}$\\
		300   & 29  & \phantom{10}107 & $8.5\cdot10^{-9}$ & $1.0\cdot10^{-12}$ & $1.5\cdot10^{-11}$\\
		400   & 26  & \phantom{10}264 & $2.7\cdot10^{-9}$ & $5.1\cdot10^{-12}$ & $1.7\cdot10^{-11}$\\
		500   & 29  & \phantom{10}695 & $3.4\cdot10^{-9}$ & $4.3\cdot10^{-13}$ & $1.6\cdot10^{-11}$\\
		600   & 30  & \phantom{1}1395 & $9.7\cdot10^{-10}$ & $1.6\cdot10^{-12}$ & $3.1\cdot10^{-10}$\\               
		700   & 30  & \phantom{1}2527 & $2.2\cdot10^{-9}$ & $9.5\cdot10^{-13}$ & $1.7\cdot10^{-10}$\\
		800   & 33  & \phantom{1}4732 & $3.0\cdot10^{-8}$ & $2.3\cdot10^{-13}$ & $5.4\cdot10^{-12}$\\
		900   & 30  & \phantom{1}6724 & $5.6\cdot10^{-10}$ & $4.2\cdot10^{-12}$ & $9.1\cdot10^{-10}$\\
		1000  & 31  &           10505 & $3.9\cdot10^{-10}$ & $2.2\cdot10^{-13}$ & $2.4\cdot10^{-11}$\\		
		\bottomrule\\
	\end{tabular}
	\caption{Solver statistics from SDPT3 from the solution of Example 1. Instances of the optimization problem \eqref{eq:envelope}-\eqref{eq:envelope-dual} was solved for $m=2$, $d=5$, and different values of the degree $n$. Larger problems ($n+1 \geq 1100)$ could not be solved because of memory constraints.}
	\label{tbl:ex1-highdegree-sdpt3}
\end{table}

\begin{table}
	\centering
	\begin{tabular}{rccccc}
		\toprule
		$n+1\!\!$ & \# of iterations & solver time [s] & primal inf. & dual inf. & duality gap\\
		\midrule
		100   & 17  & \phantom{1888}1 & $1.89\cdot10^{-11}$ & $6.68\cdot10^{-13}$ & $1.74\cdot10^{-9}$\\
		200   & 19  & \phantom{188}10 & $2.57\cdot10^{-12}$ & $1.63\cdot10^{-12}$ & $3.71\cdot10^{-10}$\\
		300   & 21  & \phantom{188}45 & $9.13\cdot10^{-12}$ & $2.29\cdot10^{-10}$ & $1.70\cdot10^{-9}$\\
		400   & 19  & \phantom{18}136 & $1.83\cdot10^{-11}$ & $7.02\cdot10^{-12}$ & $6.38\cdot10^{-9}$\\
		500   & 21  & \phantom{18}371 & $4.41\cdot10^{-12}$ & $7.83\cdot10^{-10}$ & $1.72\cdot10^{-9}$\\
		600   & 23  & \phantom{18}788 & $4.79\cdot10^{-12}$ & $1.23\cdot10^{-10}$ & $1.59\cdot10^{-9}$\\
		700   & 22  & \phantom{1}1486 & $1.54\cdot10^{-12}$ & $3.09\cdot10^{-10}$ & $8.39\cdot10^{-10}$\\
		800   & 22  & \phantom{1}2474 & $3.33\cdot10^{-12}$ & $1.72\cdot10^{ -9}$ & $1.97\cdot10^{-9}$\\
		900   & 20  & \phantom{1}4569 & $7.25\cdot10^{-12}$ & $3.57\cdot10^{-11}$ & $7.38\cdot10^{-9}$\\
		1000  & 22  & \phantom{1}8634 & $9.00\cdot10^{-13}$ & $1.84\cdot10^{-10}$ & $5.68\cdot10^{-10}$\\
		1100  & 22  &           15129 & $9.88\cdot10^{-13}$ & $4.46\cdot10^{ -9}$ & $7.75\cdot10^{-10}$\\
		\bottomrule\\
	\end{tabular}
	\caption{Solver statistics from CSDP from the solution of Example 1. Instances of the optimization problem \eqref{eq:envelope}-\eqref{eq:envelope-dual} was solved for $m=2$, $d=5$, and different values of the degree $n$. Larger problems ($n+1 \geq 1200)$ could not be solved because of memory constraints.}
	\label{tbl:ex1-highdegree-csdp}
\end{table}


\deletethis{99
	
	orthogonalizing...
	Elapsed time is 0.346807 seconds.
	solving...
	
	num. of constraints = 200
	dim. of sdp    var  = 200,   num. of sdp  blk  =  4
	dim. of free   var  = 100
	*** convert ublk to linear blk
	********************************************************************************************
	SDPT3: homogeneous self-dual path-following algorithms
	********************************************************************************************
	version  predcorr  gam  expon
	HKM      1      0.000   1
	it pstep dstep pinfeas dinfeas  gap     mean(obj)    cputime    kap   tau    theta
	--------------------------------------------------------------------------------------------
	0|0.000|0.000|6.6e+02|1.4e+02|3.3e+06| 6.877204e-02| 0:0:00|3.3e+06|1.0e+00|1.0e+00| chol 1  1 
	1|0.003|0.003|6.6e+02|1.4e+02|3.3e+06| 1.275171e-01| 0:0:01|3.3e+06|1.0e+00|1.0e+00| chol 1  1 
	2|0.004|0.004|6.6e+02|1.4e+02|3.3e+06| 5.818469e+00| 0:0:01|3.3e+06|1.0e+00|1.0e+00| chol 1  1 
	3|0.015|0.015|6.6e+02|1.4e+02|3.3e+06| 1.840765e+01| 0:0:01|3.3e+06|1.0e+00|1.0e+00| chol 1  1 
	4|0.054|0.054|6.5e+02|1.4e+02|3.4e+06| 1.843938e+01| 0:0:01|3.2e+06|1.0e+00|9.9e-01| chol 1  1 
	5|0.189|0.189|6.1e+02|1.3e+02|3.6e+06| 1.012021e+02| 0:0:01|3.1e+06|1.0e+00|9.3e-01| chol 1  1 
	6|0.638|0.638|4.6e+02|9.7e+01|4.0e+06| 8.812533e+02| 0:0:01|1.9e+06|7.5e-01|5.3e-01| chol 1  1 
	7|0.966|0.966|3.4e+01|7.2e+00|3.1e+05| 1.969516e+03| 0:0:01|1.7e+05|7.2e-01|3.7e-02| chol 1  1 
	8|0.944|0.944|1.8e+00|3.8e-01|1.9e+04| 1.866678e+03| 0:0:02|2.1e+03|7.5e-01|2.0e-03| chol 1  1 
	9|0.613|0.613|7.8e-01|1.6e-01|5.7e+03| 7.968058e+02| 0:0:02|9.4e+01|1.0e+00|1.2e-03| chol 1  1 
	10|0.734|0.734|2.3e-01|4.9e-02|1.3e+03| 2.571399e+02| 0:0:02|3.5e+00|1.4e+00|4.7e-04| chol 1  1 
	11|0.922|0.922|2.0e-02|7.0e-03|8.5e+01| 7.723424e+00| 0:0:02|3.0e+00|1.8e+00|5.7e-05| chol 1  1 
	12|0.932|0.932|7.1e-03|4.6e-03|3.5e+01| 6.272036e+00| 0:0:02|4.2e-01|1.9e+00|2.1e-05| chol 1  1 
	13|0.912|0.912|1.0e-03|4.2e-03|5.1e+00| 5.221085e-01| 0:0:02|1.8e-01|2.0e+00|3.0e-06| chol 1  1 
	14|1.000|1.000|1.3e-04|3.3e-03|6.3e-01| 3.714647e-01| 0:0:03|2.5e-02|2.0e+00|3.8e-07| chol 1  1 
	15|0.972|0.972|2.0e-05|2.9e-03|9.9e-02| 3.660781e-01| 0:0:03|3.8e-03|2.0e+00|6.0e-08| chol 1  1 
	16|0.913|0.913|3.1e-06|2.6e-03|1.5e-02| 3.707355e-01| 0:0:03|7.9e-04|2.0e+00|9.2e-09| chol 1  1 
	17|0.916|0.916|4.7e-07|2.3e-03|2.3e-03| 3.661229e-01| 0:0:03|1.4e-04|2.0e+00|1.4e-09| chol 1  1 
	18|0.971|0.971|2.7e-08|1.2e-03|1.3e-04| 3.389636e-01| 0:0:03|1.5e-05|2.0e+00|8.0e-11| chol 1  1 
	19|0.961|0.961|2.6e-09|1.0e-04|1.3e-05| 3.129381e-01| 0:0:03|1.2e-06|2.0e+00|7.9e-12| chol 1  1 
	20|1.000|1.000|2.4e-10|2.9e-06|1.2e-06| 3.105675e-01| 0:0:04|6.6e-08|2.0e+00|7.1e-13| chol 1  1 
	21|1.000|1.000|1.8e-11|1.4e-07|3.3e-08| 3.105019e-01| 0:0:04|5.9e-09|2.0e+00|2.0e-14| chol 1  1 
	22|1.000|1.000|7.9e-11|4.3e-09|1.3e-09| 3.104986e-01| 0:0:04|1.8e-10|2.0e+00|8.0e-16| chol 1  1 
	23|1.000|1.000|3.1e-10|1.3e-10|5.6e-11| 3.104985e-01| 0:0:04|7.1e-12|2.0e+00|7.5e-17| chol 1  1 
	24|0.993|0.993|6.3e-10|1.1e-11|5.2e-12| 3.104985e-01| 0:0:04|3.4e-13|2.0e+00|1.9e-15|
	lack of progress in infeas
	-------------------------------------------------------------------
	number of iterations   = 24
	primal objective value =  3.10498468e-01
	dual   objective value =  3.10498474e-01
	gap := trace(XZ)       = 5.58e-11
	relative gap           = 4.26e-11
	actual relative gap    = -3.85e-09
	rel. primal infeas     = 3.11e-10
	rel. dual   infeas     = 1.30e-10
	norm(X), norm(y), norm(Z) = 2.0e+02, 2.4e-01, 2.2e-01
	norm(A), norm(b), norm(C) = 1.4e+02, 1.6e+02, 3.2e-01
	Total CPU time (secs)  = 4.15  
	CPU time per iteration = 0.17  
	termination code       = -9
	DIMACS: 3.1e-10  0.0e+00  1.3e-10  0.0e+00  -3.9e-09  3.4e-11
	-------------------------------------------------------------------
	
	result = 
	
	yalmiptime: 0.584000000000001
	solvertime: 4.357
	info: 'Lack of progress (SDPT3-4)'
	problem: 5

	optval =
	
	-0.310498474386613

	degP =
	
	199
	
	orthogonalizing...
	Elapsed time is 2.636525 seconds.
	solving...
	
	num. of constraints = 400
	dim. of sdp    var  = 400,   num. of sdp  blk  =  4
	dim. of free   var  = 200
	*** convert ublk to linear blk
	********************************************************************************************
	SDPT3: homogeneous self-dual path-following algorithms
	********************************************************************************************
	version  predcorr  gam  expon
	HKM      1      0.000   1
	it pstep dstep pinfeas dinfeas  gap     mean(obj)    cputime    kap   tau    theta
	--------------------------------------------------------------------------------------------
	0|0.000|0.000|1.9e+03|2.0e+02|1.3e+07|-3.805050e-02| 0:0:02|1.3e+07|1.0e+00|1.0e+00| chol 1  1 
	1|0.002|0.002|1.9e+03|2.0e+02|1.3e+07| 1.845815e+00| 0:0:03|1.3e+07|1.0e+00|1.0e+00| chol 1  1 
	2|0.002|0.002|1.9e+03|2.0e+02|1.3e+07| 1.257020e+01| 0:0:03|1.3e+07|1.0e+00|1.0e+00| chol 1  1 
	3|0.008|0.008|1.9e+03|2.0e+02|1.3e+07| 3.102804e+01| 0:0:04|1.3e+07|1.0e+00|1.0e+00| chol 1  1 
	4|0.024|0.024|1.9e+03|2.0e+02|1.3e+07| 4.381668e+01| 0:0:05|1.3e+07|1.0e+00|1.0e+00| chol 1  1 
	5|0.082|0.082|1.8e+03|1.9e+02|1.4e+07| 8.073418e+01| 0:0:06|1.3e+07|1.0e+00|9.8e-01| chol 1  1 
	6|0.289|0.289|1.6e+03|1.7e+02|1.4e+07| 4.303011e+02| 0:0:07|1.1e+07|1.0e+00|8.6e-01| chol 1  1 
	7|0.889|0.889|4.2e+02|4.5e+01|4.4e+06| 2.752413e+03| 0:0:08|3.4e+06|8.2e-01|1.8e-01| chol 1  1 
	8|0.983|0.983|9.2e+00|9.8e-01|1.0e+05| 3.443271e+03| 0:0:09|8.5e+04|8.1e-01|4.0e-03| chol 1  1 
	9|0.838|0.838|1.1e+00|1.2e-01|1.5e+04| 2.524925e+03| 0:0:10|1.5e+03|9.3e-01|5.7e-04| chol 1  1 
	10|0.644|0.644|5.4e-01|5.9e-02|5.0e+03| 9.861459e+02| 0:0:11|7.3e+01|1.2e+00|3.5e-04| chol 1  1 
	11|0.733|0.733|1.5e-01|1.8e-02|1.1e+03| 2.706457e+02| 0:0:12|2.6e+00|1.6e+00|1.3e-04| chol 1  1 
	12|0.942|0.942|1.1e-02|4.5e-03|6.5e+01| 6.414100e+00| 0:0:12|1.7e+00|1.9e+00|1.1e-05| chol 1  1 
	13|0.965|0.965|3.3e-03|3.7e-03|2.3e+01| 3.432638e+00| 0:0:13|1.8e-01|2.0e+00|3.5e-06| chol 1  1 
	14|0.934|0.934|5.1e-04|3.5e-03|3.6e+00| 4.495421e-01| 0:0:14|6.3e-02|2.0e+00|5.4e-07| chol 1  1 
	15|0.993|0.993|7.7e-05|2.9e-03|5.4e-01| 3.622728e-01| 0:0:15|9.3e-03|2.0e+00|8.2e-08| chol 1  1 
	16|0.898|0.898|1.2e-05|2.6e-03|8.0e-02| 3.693606e-01| 0:0:16|2.2e-03|2.0e+00|1.2e-08| chol 1  1 
	17|0.994|0.994|1.7e-06|2.3e-03|1.2e-02| 3.690926e-01| 0:0:17|2.2e-04|2.0e+00|1.8e-09| chol 1  1 
	18|0.929|0.929|2.2e-07|1.3e-03|1.5e-03| 3.339967e-01| 0:0:18|4.3e-05|2.0e+00|2.4e-10| chol 1  1 
	19|0.977|0.977|2.6e-08|6.0e-04|1.8e-04| 3.124612e-01| 0:0:19|4.7e-06|2.0e+00|2.8e-11| chol 1  1 
	20|1.000|1.000|4.4e-09|2.9e-05|3.1e-05| 2.935087e-01| 0:0:20|4.5e-07|2.0e+00|4.7e-12| chol 1  1 
	21|1.000|1.000|1.0e-10|1.5e-06|7.5e-07| 2.925903e-01| 0:0:20|7.8e-08|2.0e+00|1.1e-13| chol 1  1 
	22|1.000|1.000|3.6e-11|7.3e-08|2.6e-08| 2.925442e-01| 0:0:21|2.0e-09|2.0e+00|3.9e-15| chol 1  1 
	23|1.000|1.000|1.6e-10|2.2e-09|9.1e-10| 2.925419e-01| 0:0:22|6.8e-11|2.0e+00|1.5e-16| chol 1  1 
	24|1.000|1.000|5.7e-10|6.8e-11|3.1e-11| 2.925418e-01| 0:0:23|2.4e-12|2.0e+00|5.9e-17| chol 1  1 
	25|0.994|0.994|1.4e-09|3.9e-12|5.7e-12| 2.925418e-01| 0:0:24|9.5e-14|2.0e+00|1.4e-15|
	lack of progress in infeas
	-------------------------------------------------------------------
	number of iterations   = 25
	primal objective value =  2.92541780e-01
	dual   objective value =  2.92541785e-01
	gap := trace(XZ)       = 3.14e-11
	relative gap           = 2.43e-11
	actual relative gap    = -3.43e-09
	rel. primal infeas     = 5.70e-10
	rel. dual   infeas     = 6.82e-11
	norm(X), norm(y), norm(Z) = 2.8e+02, 1.7e-01, 1.6e-01
	norm(A), norm(b), norm(C) = 2.0e+02, 2.2e+02, 2.2e-01
	Total CPU time (secs)  = 24.23  
	CPU time per iteration = 0.97  
	termination code       = -9
	DIMACS: 5.7e-10  0.0e+00  6.8e-11  0.0e+00  -3.4e-09  2.0e-11
	-------------------------------------------------------------------
	
	result = 
	
	yalmiptime: 1.857
	solvertime: 24.576
	info: 'Lack of progress (SDPT3-4)'
	problem: 5

	optval =
	
	-0.292541785277766

	degP =
	
	299
	
	orthogonalizing...
	Elapsed time is 9.141506 seconds.
	solving...
	
	num. of constraints = 600
	dim. of sdp    var  = 600,   num. of sdp  blk  =  4
	dim. of free   var  = 300
	*** convert ublk to linear blk
	********************************************************************************************
	SDPT3: homogeneous self-dual path-following algorithms
	********************************************************************************************
	version  predcorr  gam  expon
	HKM      1      0.000   1
	it pstep dstep pinfeas dinfeas  gap     mean(obj)    cputime    kap   tau    theta
	--------------------------------------------------------------------------------------------
	0|0.000|0.000|3.4e+03|3.0e+02|3.6e+07|-9.137431e-03| 0:0:07|3.6e+07|1.0e+00|1.0e+00| chol 1  1 
	1|0.001|0.001|3.4e+03|3.0e+02|3.6e+07|-2.974854e+00| 0:0:10|3.6e+07|1.0e+00|1.0e+00| chol 1  1 
	2|0.001|0.001|3.4e+03|3.0e+02|3.6e+07| 2.789981e+00| 0:0:14|3.6e+07|1.0e+00|1.0e+00| chol 1  1 
	3|0.004|0.004|3.4e+03|3.0e+02|3.6e+07| 2.020979e+01| 0:0:17|3.6e+07|1.0e+00|1.0e+00| chol 1  1 
	4|0.013|0.013|3.4e+03|3.0e+02|3.7e+07| 4.091528e+01| 0:0:20|3.6e+07|1.0e+00|1.0e+00| chol 1  1 
	5|0.043|0.043|3.4e+03|3.0e+02|3.8e+07| 6.672262e+01| 0:0:24|3.6e+07|1.0e+00|9.9e-01| chol 1  1 
	6|0.159|0.159|3.2e+03|2.8e+02|3.9e+07| 3.125409e+02| 0:0:27|3.4e+07|1.0e+00|9.4e-01| chol 1  1 
	7|0.539|0.539|2.8e+03|2.4e+02|5.0e+07| 2.232969e+03| 0:0:31|2.4e+07|7.7e-01|6.2e-01| chol 1  1 
	8|0.955|0.955|2.7e+02|2.3e+01|5.0e+06| 5.950537e+03| 0:0:34|2.7e+06|7.2e-01|5.5e-02| chol 1  1 
	9|0.979|0.979|5.5e+00|4.8e-01|1.2e+05| 6.193764e+03| 0:0:38|3.4e+04|7.2e-01|1.2e-03| chol 1  1 
	10|0.657|0.657|1.6e+00|1.4e-01|2.7e+04| 3.435293e+03| 0:0:41|1.0e+03|9.5e-01|4.4e-04| chol 1  1 
	11|0.696|0.696|6.4e-01|5.7e-02|8.8e+03| 1.538944e+03| 0:0:44|4.6e+01|1.2e+00|2.2e-04| chol 1  1 
	12|0.754|0.754|1.5e-01|1.5e-02|1.5e+03| 3.619433e+02| 0:0:48|1.4e+00|1.6e+00|7.4e-05| chol 1  1 
	13|0.951|0.951|9.3e-03|3.9e-03|7.9e+01| 9.392945e+00| 0:0:51|1.8e+00|2.0e+00|5.3e-06| chol 1  1 
	14|1.000|1.000|2.2e-03|4.0e-03|2.5e+01| 3.023645e+00| 0:0:55|9.8e-02|2.0e+00|1.3e-06| chol 1  1 
	15|1.000|1.000|3.8e-04|3.1e-03|4.1e+00| 4.438935e-01| 0:0:58|4.1e-02|2.0e+00|2.2e-07| chol 1  1 
	16|0.923|0.923|7.5e-05|2.7e-03|7.8e-01| 3.655498e-01| 0:1:01|9.5e-03|2.0e+00|4.3e-08| chol 1  1 
	17|0.994|0.994|1.0e-05|2.4e-03|1.1e-01| 3.698509e-01| 0:1:05|1.4e-03|2.0e+00|6.0e-09| chol 1  1 
	18|0.910|0.910|1.6e-06|2.1e-03|1.7e-02| 3.732357e-01| 0:1:08|2.9e-04|2.0e+00|9.2e-10| chol 1  1 
	19|0.931|0.931|2.4e-07|1.9e-03|2.5e-03| 3.663498e-01| 0:1:12|4.6e-05|2.0e+00|1.4e-10| chol 1  1 
	20|0.887|0.887|3.9e-08|1.1e-03|3.9e-04| 3.319628e-01| 0:1:15|8.9e-06|2.0e+00|2.3e-11| chol 1  1 
	21|0.932|0.932|9.6e-09|1.2e-04|1.0e-04| 2.938134e-01| 0:1:19|1.2e-06|2.0e+00|5.6e-12| chol 1  1 
	22|0.977|0.977|1.3e-09|5.0e-06|1.3e-05| 2.893358e-01| 0:1:22|1.9e-07|2.0e+00|7.1e-13| chol 1  1 
	23|0.929|0.929|1.1e-09|4.6e-07|1.7e-06| 2.891549e-01| 0:1:25|3.4e-08|2.0e+00|9.4e-14| chol 1  1 
	24|0.912|0.912|2.9e-09|4.6e-08|2.6e-07| 2.891384e-01| 0:1:29|5.6e-09|2.0e+00|1.3e-14| chol 1  1 
	25|0.895|0.895|2.8e-09|5.1e-09|3.8e-08| 2.891367e-01| 0:1:32|9.9e-10|2.0e+00|1.8e-15| chol 1  1 
	26|0.949|0.949|2.1e-09|2.7e-10|2.9e-09| 2.891365e-01| 0:1:36|1.1e-10|2.0e+00|1.2e-16| chol 1  1 
	27|0.942|0.942|1.5e-09|1.7e-11|2.2e-10| 2.891365e-01| 0:1:39|1.1e-11|2.0e+00|4.0e-18| chol 1  1 
	28|0.939|0.939|1.6e-09|1.1e-12|1.9e-11| 2.891365e-01| 0:1:43|1.0e-12|2.0e+00|1.6e-16| chol 1  1 
	29|0.311|0.311|8.5e-09|1.0e-12|1.5e-11| 2.891365e-01| 0:1:46|7.2e-13|2.0e+00|2.8e-15|
	lack of progress in infeas
	-------------------------------------------------------------------
	number of iterations   = 29
	primal objective value =  2.89136525e-01
	dual   objective value =  2.89136524e-01
	gap := trace(XZ)       = 1.86e-11
	relative gap           = 1.44e-11
	actual relative gap    = 1.01e-09
	rel. primal infeas     = 1.55e-09
	rel. dual   infeas     = 1.15e-12
	norm(X), norm(y), norm(Z) = 3.4e+02, 1.5e-01, 1.3e-01
	norm(A), norm(b), norm(C) = 3.0e+02, 2.7e+02, 1.8e-01
	Total CPU time (secs)  = 106.05  
	CPU time per iteration = 3.66  
	termination code       = -9
	DIMACS: 1.6e-09  0.0e+00  1.1e-12  0.0e+00  1.0e-09  1.2e-11
	-------------------------------------------------------------------
	
	result = 
	
	yalmiptime: 5.5370
	solvertime: 106.8000
	info: 'Lack of progress (SDPT3-4)'
	problem: 5

	optval =
	
	-0.289136523746832

	degP =
	
	399
	
	orthogonalizing...
	Elapsed time is 21.892281 seconds.
	solving...
	
	num. of constraints = 800
	dim. of sdp    var  = 800,   num. of sdp  blk  =  4
	dim. of free   var  = 400
	*** convert ublk to linear blk
	********************************************************************************************
	SDPT3: homogeneous self-dual path-following algorithms
	********************************************************************************************
	version  predcorr  gam  expon
	HKM      1      0.000   1
	it pstep dstep pinfeas dinfeas  gap     mean(obj)    cputime    kap   tau    theta
	--------------------------------------------------------------------------------------------
	0|0.000|0.000|5.3e+03|4.0e+02|7.4e+07| 8.715298e-03| 0:0:21|7.4e+07|1.0e+00|1.0e+00| chol 1  1 
	1|0.001|0.001|5.3e+03|4.0e+02|7.4e+07| 2.176318e+00| 0:0:30|7.4e+07|1.0e+00|1.0e+00| chol 1  1 
	2|0.001|0.001|5.3e+03|4.0e+02|7.4e+07| 1.643375e+01| 0:0:39|7.4e+07|1.0e+00|1.0e+00| chol 1  1 
	3|0.003|0.003|5.3e+03|4.0e+02|7.4e+07| 3.941423e+01| 0:0:48|7.4e+07|1.0e+00|1.0e+00| chol 1  1 
	4|0.009|0.009|5.3e+03|4.0e+02|7.5e+07| 6.844205e+01| 0:0:58|7.4e+07|1.0e+00|1.0e+00| chol 1  1 
	5|0.029|0.029|5.3e+03|4.0e+02|7.7e+07| 8.547169e+01| 0:1:07|7.4e+07|1.0e+00|1.0e+00| chol 1  1 
	6|0.108|0.108|5.1e+03|3.8e+02|7.9e+07| 2.975485e+02| 0:1:16|7.2e+07|1.0e+00|9.6e-01| chol 1  1 
	7|0.380|0.380|5.1e+03|3.8e+02|1.1e+08| 1.855328e+03| 0:1:26|5.8e+07|8.1e-01|7.7e-01| chol 1  1 
	8|0.907|0.907|1.0e+03|7.6e+01|2.5e+07| 7.393408e+03| 0:1:35|1.2e+07|7.2e-01|1.4e-01| chol 1  1 
	9|0.985|0.985|1.6e+01|1.2e+00|4.1e+05| 8.518403e+03| 0:1:44|1.7e+05|7.2e-01|2.1e-03| chol 1  1 
	10|0.831|0.831|2.2e+00|1.7e-01|6.1e+04| 6.535202e+03| 0:1:54|3.1e+03|8.3e-01|3.5e-04| chol 1  1 
	11|0.643|0.643|1.0e+00|7.8e-02|2.1e+04| 2.974760e+03| 0:2:03|1.5e+02|1.1e+00|2.1e-04| chol 1  1 
	12|0.709|0.709|3.5e-01|2.8e-02|5.6e+03| 1.127425e+03| 0:2:12|6.3e+00|1.4e+00|9.5e-05| chol 1  1 
	13|0.957|0.957|1.9e-02|4.2e-03|2.3e+02| 3.736161e+01| 0:2:21|2.9e+00|1.9e+00|6.8e-06| chol 1  1 
	14|0.976|0.976|2.4e-03|3.3e-03|3.2e+01| 1.884398e+00| 0:2:31|2.3e-01|2.0e+00|8.9e-07| chol 1  1 
	15|0.968|0.968|6.7e-04|3.1e-03|9.7e+00| 1.272030e+00| 0:2:40|4.5e-02|2.0e+00|2.5e-07| chol 1  1 
	16|0.907|0.907|1.2e-04|2.7e-03|1.7e+00| 4.307854e-01| 0:2:49|1.5e-02|2.0e+00|4.5e-08| chol 1  1 
	17|1.000|1.000|2.4e-05|2.4e-03|3.3e-01| 3.703160e-01| 0:2:59|2.1e-03|2.0e+00|8.9e-09| chol 1  1 
	18|0.878|0.878|3.5e-06|2.2e-03|4.8e-02| 3.815879e-01| 0:3:08|6.2e-04|2.0e+00|1.3e-09| chol 1  1 
	19|0.964|0.964|4.9e-07|1.9e-03|6.8e-03| 3.761133e-01| 0:3:18|8.0e-05|2.0e+00|1.8e-10| chol 1  1 
	20|0.970|0.970|6.2e-08|9.9e-04|8.6e-04| 3.334264e-01| 0:3:27|1.1e-05|2.0e+00|2.4e-11| chol 1  1 
	21|1.000|1.000|5.1e-09|4.8e-05|7.2e-05| 2.900637e-01| 0:3:36|1.1e-06|2.0e+00|1.9e-12| chol 1  1 
	22|1.000|1.000|4.0e-10|2.4e-06|4.3e-06| 2.879661e-01| 0:3:45|9.1e-08|2.0e+00|1.2e-13| chol 1  1 
	23|1.000|1.000|1.8e-10|1.2e-07|1.6e-07| 2.878613e-01| 0:3:55|5.5e-09|2.0e+00|4.2e-15| chol 1  1 
	24|1.000|1.000|5.7e-10|3.6e-09|7.9e-09| 2.878560e-01| 0:4:04|2.0e-10|2.0e+00|2.0e-16| chol 1  1 
	25|1.000|1.000|1.6e-09|1.1e-10|3.5e-10| 2.878558e-01| 0:4:13|9.9e-12|2.0e+00|1.3e-17| chol 1  1 
	26|1.000|1.000|2.7e-09|5.1e-12|1.7e-11| 2.878558e-01| 0:4:23|4.4e-13|2.0e+00|1.8e-17|
	lack of progress in infeas
	-------------------------------------------------------------------
	number of iterations   = 26
	primal objective value =  2.87855801e-01
	dual   objective value =  2.87855811e-01
	gap := trace(XZ)       = 3.47e-10
	relative gap           = 2.69e-10
	actual relative gap    = -6.64e-09
	rel. primal infeas     = 1.59e-09
	rel. dual   infeas     = 1.09e-10
	norm(X), norm(y), norm(Z) = 3.8e+02, 1.2e-01, 1.1e-01
	norm(A), norm(b), norm(C) = 4.0e+02, 3.1e+02, 1.6e-01
	Total CPU time (secs)  = 263.01  
	CPU time per iteration = 10.12  
	termination code       = -9
	DIMACS: 1.6e-09  0.0e+00  1.1e-10  0.0e+00  -6.6e-09  2.2e-10
	-------------------------------------------------------------------
	
	result = 
	
	yalmiptime: 14.4920
	solvertime: 264.4830
	info: 'Lack of progress (SDPT3-4)'
	problem: 5

	optval =
	
	-0.287855811435892

	499
	
	orthogonalizing...
	Elapsed time is 44.837287 seconds.
	solving...
	
	num. of constraints = 1000
	dim. of sdp    var  = 1000,   num. of sdp  blk  =  4
	dim. of free   var  = 500
	*** convert ublk to linear blk
	********************************************************************************************
	SDPT3: homogeneous self-dual path-following algorithms
	********************************************************************************************
	version  predcorr  gam  expon
	HKM      1      0.000   1
	it pstep dstep pinfeas dinfeas  gap     mean(obj)    cputime    kap   tau    theta
	--------------------------------------------------------------------------------------------
	0|0.000|0.000|7.4e+03|5.0e+02|1.3e+08| 1.165534e-02| 0:0:52|1.3e+08|1.0e+00|1.0e+00| chol 1  1 
	1|0.000|0.000|7.4e+03|5.0e+02|1.3e+08|-2.781380e+00| 0:1:11|1.3e+08|1.0e+00|1.0e+00| chol 1  1 
	2|0.001|0.001|7.4e+03|5.0e+02|1.3e+08|-1.068980e-01| 0:1:32|1.3e+08|1.0e+00|1.0e+00| chol 1  1 
	3|0.002|0.002|7.4e+03|5.0e+02|1.3e+08| 2.023613e+01| 0:1:54|1.3e+08|1.0e+00|1.0e+00| chol 1  1 
	4|0.006|0.006|7.4e+03|5.0e+02|1.3e+08| 4.822758e+01| 0:2:17|1.3e+08|1.0e+00|1.0e+00| chol 1  1 
	5|0.018|0.018|7.4e+03|5.0e+02|1.3e+08| 6.616416e+01| 0:2:39|1.3e+08|1.0e+00|1.0e+00| chol 1  1 
	6|0.063|0.063|7.3e+03|4.9e+02|1.4e+08| 1.971786e+02| 0:3:01|1.3e+08|1.0e+00|9.8e-01| chol 1  1 
	7|0.235|0.235|6.5e+03|4.4e+02|1.4e+08| 1.048401e+03| 0:3:23|1.1e+08|1.0e+00|8.8e-01| chol 1  1 
	8|0.815|0.815|2.3e+03|1.5e+02|5.8e+07| 6.515587e+03| 0:3:46|4.1e+07|8.3e-01|2.6e-01| chol 1  1 
	9|0.981|0.981|5.3e+01|3.6e+00|1.4e+06| 8.928714e+03| 0:4:08|1.4e+06|8.2e-01|5.9e-03| chol 1  1 
	10|0.957|0.957|2.0e+00|1.3e-01|6.8e+04| 8.435672e+03| 0:4:30|1.8e+04|8.5e-01|2.2e-04| chol 1  1 
	11|0.550|0.550|8.7e-01|6.0e-02|2.0e+04| 3.278801e+03| 0:4:53|7.8e+02|1.2e+00|1.4e-04| chol 1  1 
	12|0.700|0.700|3.1e-01|2.3e-02|5.8e+03| 1.250748e+03| 0:5:15|3.2e+01|1.5e+00|6.4e-05| chol 1  1 
	13|0.850|0.850|4.6e-02|5.3e-03|6.6e+02| 1.686910e+02| 0:5:38|7.2e-01|1.9e+00|1.2e-05| chol 1  1 
	14|0.936|0.936|3.3e-03|3.3e-03|4.4e+01| 3.627316e+00| 0:6:00|5.8e-01|2.0e+00|8.8e-07| chol 1  1 
	15|0.921|0.921|1.1e-03|3.2e-03|1.9e+01| 2.682467e+00| 0:6:23|8.4e-02|2.0e+00|3.0e-07| chol 1  1 
	16|0.902|0.902|1.9e-04|2.8e-03|3.3e+00| 4.766815e-01| 0:6:45|2.5e-02|2.0e+00|5.2e-08| chol 1  1 
	17|0.947|0.947|4.0e-05|2.4e-03|6.9e-01| 3.800549e-01| 0:7:08|4.4e-03|2.0e+00|1.1e-08| chol 1  1 
	18|0.979|0.979|5.4e-06|2.1e-03|9.4e-02| 3.862242e-01| 0:7:30|7.7e-04|2.0e+00|1.5e-09| chol 1  1 
	19|0.919|0.919|8.3e-07|1.9e-03|1.4e-02| 3.854901e-01| 0:7:53|1.5e-04|2.0e+00|2.2e-10| chol 1  1 
	20|0.931|0.931|1.3e-07|1.7e-03|2.2e-03| 3.769226e-01| 0:8:15|2.4e-05|2.0e+00|3.4e-11| chol 1  1 
	21|0.952|0.952|1.3e-08|1.7e-04|2.3e-04| 2.957771e-01| 0:8:38|3.2e-06|2.0e+00|3.6e-12| chol 1  1 
	22|0.949|0.949|3.3e-09|1.3e-05|5.5e-05| 2.879194e-01| 0:9:00|3.9e-07|2.0e+00|8.4e-13| chol 1  1 
	23|0.909|0.909|1.5e-09|1.3e-06|8.5e-06| 2.873472e-01| 0:9:23|8.5e-08|2.0e+00|1.3e-13| chol 1  1 
	24|0.931|0.931|2.6e-09|1.0e-07|1.0e-06| 2.872843e-01| 0:9:45|1.4e-08|2.0e+00|1.5e-14| chol 1  1 
	25|0.937|0.937|2.3e-09|6.8e-09|9.0e-08| 2.872794e-01| 0:10:06|1.8e-09|2.0e+00|1.2e-15| chol 1  1 
	26|0.945|0.945|3.2e-10|3.8e-10|7.0e-09| 2.872791e-01| 0:10:27|1.8e-10|2.0e+00|8.6e-17| chol 1  1 
	27|0.925|0.925|1.1e-09|3.0e-11|7.1e-10| 2.872791e-01| 0:10:49|2.0e-11|2.0e+00|8.8e-18| chol 1  1 
	28|0.918|0.918|1.6e-09|2.6e-12|7.8e-11| 2.872791e-01| 0:11:10|2.3e-12|2.0e+00|1.2e-17| chol 1  1 
	29|0.841|0.841|3.4e-09|4.3e-13|1.6e-11| 2.872791e-01| 0:11:32|4.4e-13|2.0e+00|1.9e-16|
	lack of progress in infeas
	-------------------------------------------------------------------
	number of iterations   = 29
	primal objective value =  2.87279083e-01
	dual   objective value =  2.87279083e-01
	gap := trace(XZ)       = 7.75e-11
	relative gap           = 6.02e-11
	actual relative gap    = 1.13e-10
	rel. primal infeas     = 1.58e-09
	rel. dual   infeas     = 2.56e-12
	norm(X), norm(y), norm(Z) = 4.3e+02, 1.1e-01, 9.9e-02
	norm(A), norm(b), norm(C) = 5.0e+02, 3.4e+02, 1.4e-01
	Total CPU time (secs)  = 692.00  
	CPU time per iteration = 23.86  
	termination code       = -9
	DIMACS: 1.6e-09  0.0e+00  2.6e-12  0.0e+00  1.1e-10  4.9e-11
	-------------------------------------------------------------------
	
	result = 
	
	yalmiptime: 30.575
	solvertime: 694.549
	info: 'Lack of progress (SDPT3-4)'
	problem: 5

	degP =
	
	599
	
	orthogonalizing...
	Elapsed time is 77.494461 seconds.
	solving...
	
	num. of constraints = 1200
	dim. of sdp    var  = 1200,   num. of sdp  blk  =  4
	dim. of free   var  = 600
	*** convert ublk to linear blk
	********************************************************************************************
	SDPT3: homogeneous self-dual path-following algorithms
	********************************************************************************************
	version  predcorr  gam  expon
	HKM      1      0.000   1
	it pstep dstep pinfeas dinfeas  gap     mean(obj)    cputime    kap   tau    theta
	--------------------------------------------------------------------------------------------
	0|0.000|0.000|9.7e+03|6.0e+02|2.0e+08| 1.553640e-02| 0:1:42|2.0e+08|1.0e+00|1.0e+00| chol 1  1 
	1|0.000|0.000|9.7e+03|6.0e+02|2.0e+08| 2.071400e+00| 0:2:20|2.0e+08|1.0e+00|1.0e+00| chol 1  1 
	2|0.000|0.000|9.7e+03|6.0e+02|2.0e+08| 9.377255e+00| 0:3:04|2.0e+08|1.0e+00|1.0e+00| chol 1  1 
	3|0.001|0.001|9.7e+03|6.0e+02|2.0e+08| 2.947799e+01| 0:3:49|2.0e+08|1.0e+00|1.0e+00| chol 1  1 
	4|0.003|0.003|9.7e+03|6.0e+02|2.1e+08| 6.834091e+01| 0:4:33|2.0e+08|1.0e+00|1.0e+00| chol 1  1 
	5|0.011|0.011|9.7e+03|6.0e+02|2.1e+08| 8.827972e+01| 0:5:16|2.0e+08|1.0e+00|1.0e+00| chol 1  1 
	6|0.037|0.037|9.7e+03|5.9e+02|2.1e+08| 1.646305e+02| 0:5:59|2.0e+08|1.0e+00|9.9e-01| chol 1  1 
	7|0.136|0.136|9.2e+03|5.6e+02|2.2e+08| 6.789694e+02| 0:6:42|1.9e+08|1.0e+00|9.4e-01| chol 1  1 
	8|0.551|0.551|6.7e+03|4.1e+02|2.1e+08| 4.958013e+03| 0:7:25|1.2e+08|8.2e-01|5.6e-01| chol 1  1 
	9|0.956|0.956|4.9e+02|3.0e+01|1.6e+07| 1.120707e+04| 0:8:08|1.1e+07|7.8e-01|4.0e-02| chol 1  1 
	10|0.984|0.984|7.8e+00|4.8e-01|2.8e+05| 1.157340e+04| 0:8:52|1.3e+05|7.9e-01|6.3e-04| chol 1  1 
	11|0.737|0.737|1.7e+00|1.0e-01|5.6e+04| 7.433419e+03| 0:9:35|3.2e+03|9.8e-01|1.7e-04| chol 1  1 
	12|0.663|0.663|7.1e-01|4.5e-02|1.9e+04| 3.209431e+03| 0:10:18|1.5e+02|1.3e+00|9.2e-05| chol 1  1 
	13|0.726|0.726|2.2e-01|1.5e-02|4.4e+03| 1.023104e+03| 0:11:01|5.3e+00|1.6e+00|3.6e-05| chol 1  1 
	14|0.934|0.934|1.5e-02|3.6e-03|2.5e+02| 5.684811e+01| 0:11:44|2.1e+00|1.9e+00|3.0e-06| chol 1  1 
	15|0.893|0.893|1.9e-03|3.0e-03|3.2e+01| 1.338771e+00| 0:12:27|3.6e-01|2.0e+00|3.9e-07| chol 1  1 
	16|0.965|0.965|4.6e-04|2.8e-03|9.5e+00| 1.325727e+00| 0:13:10|3.7e-02|2.0e+00|9.4e-08| chol 1  1 
	17|0.981|0.981|8.6e-05|2.4e-03|1.8e+00| 4.369891e-01| 0:13:53|8.5e-03|2.0e+00|1.8e-08| chol 1  1 
	18|1.000|1.000|1.7e-05|2.1e-03|3.5e-01| 3.776262e-01| 0:14:36|1.5e-03|2.0e+00|3.4e-09| chol 1  1 
	19|0.877|0.877|2.5e-06|1.9e-03|5.2e-02| 3.903338e-01| 0:15:18|4.4e-04|2.0e+00|5.1e-10| chol 1  1 
	20|0.924|0.924|3.7e-07|1.7e-03|7.7e-03| 3.845389e-01| 0:16:01|7.4e-05|2.0e+00|7.6e-11| chol 1  1 
	21|0.955|0.955|4.2e-08|1.6e-04|8.8e-04| 2.959055e-01| 0:16:44|9.4e-06|2.0e+00|8.6e-12| chol 1  1 
	22|0.962|0.962|5.9e-09|1.0e-05|1.2e-04| 2.875206e-01| 0:17:27|1.1e-06|2.0e+00|1.1e-12| chol 1  1 
	23|0.911|0.911|2.4e-09|1.1e-06|3.5e-05| 2.870167e-01| 0:18:10|1.8e-07|2.0e+00|3.3e-13| chol 1  1 
	24|0.734|0.734|1.1e-09|3.0e-07|1.4e-05| 2.869745e-01| 0:18:53|7.0e-08|2.0e+00|1.3e-13| chol 1  1 
	25|0.789|0.789|8.5e-10|6.4e-08|4.2e-06| 2.869625e-01| 0:19:36|2.4e-08|2.0e+00|3.8e-14| chol 1  1 
	26|0.889|0.889|1.1e-09|7.3e-09|6.8e-07| 2.869597e-01| 0:20:18|5.8e-09|2.0e+00|6.0e-15| chol 1  1 
	27|0.925|0.925|1.4e-09|5.8e-10|7.3e-08| 2.869594e-01| 0:21:02|9.6e-10|2.0e+00|6.0e-16| chol 1  1 
	28|0.954|0.954|1.2e-09|2.9e-11|4.7e-09| 2.869594e-01| 0:21:44|1.0e-10|2.0e+00|3.6e-17| chol 1  1 
	29|0.952|0.952|9.7e-10|1.6e-12|3.1e-10| 2.869594e-01| 0:22:27|8.7e-12|2.0e+00|7.4e-18| chol
	SMW too ill-conditioned, switch to LU factor, 4.2e+25.
	switch to LU factor lu  1  1 
	30|0.930|0.930|3.3e-09|1.5e-13|2.9e-11| 2.869594e-01| 0:23:10|8.5e-13|2.0e+00|1.7e-16|
	lack of progress in infeas
	-------------------------------------------------------------------
	number of iterations   = 30
	primal objective value =  2.86959377e-01
	dual   objective value =  2.86959377e-01
	gap := trace(XZ)       = 3.05e-10
	relative gap           = 2.37e-10
	actual relative gap    = 2.43e-10
	rel. primal infeas     = 9.67e-10
	rel. dual   infeas     = 1.55e-12
	norm(X), norm(y), norm(Z) = 4.7e+02, 9.8e-02, 9.1e-02
	norm(A), norm(b), norm(C) = 6.0e+02, 3.7e+02, 1.3e-01
	Total CPU time (secs)  = 1390.23  
	CPU time per iteration = 46.34  
	termination code       = -9
	DIMACS: 9.7e-10  0.0e+00  1.6e-12  0.0e+00  2.4e-10  1.9e-10
	-------------------------------------------------------------------
	
	result = 
	
	yalmiptime: 55.49
	solvertime: 1394.551
	info: 'Lack of progress (SDPT3-4)'
	problem: 5

	optval =
	
	-0.286959376949986

	699
	
	orthogonalizing...
	Elapsed time is 124.946689 seconds.
	solving...
	
	num. of constraints = 1400
	dim. of sdp    var  = 1400,   num. of sdp  blk  =  4
	dim. of free   var  = 700
	*** convert ublk to linear blk
	********************************************************************************************
	SDPT3: homogeneous self-dual path-following algorithms
	********************************************************************************************
	version  predcorr  gam  expon
	HKM      1      0.000   1
	it pstep dstep pinfeas dinfeas  gap     mean(obj)    cputime    kap   tau    theta
	--------------------------------------------------------------------------------------------
	0|0.000|0.000|1.2e+04|7.0e+02|3.0e+08| 8.908510e-03| 0:3:07|3.0e+08|1.0e+00|1.0e+00| chol 1  1 
	1|0.000|0.000|1.2e+04|7.0e+02|3.0e+08| 5.140605e-01| 0:4:14|3.0e+08|1.0e+00|1.0e+00| chol 1  1 
	2|0.000|0.000|1.2e+04|7.0e+02|3.0e+08| 9.854344e+00| 0:5:31|3.0e+08|1.0e+00|1.0e+00| chol 1  1 
	3|0.001|0.001|1.2e+04|7.0e+02|3.0e+08| 4.261383e+01| 0:6:49|3.0e+08|1.0e+00|1.0e+00| chol 1  1 
	4|0.005|0.005|1.2e+04|7.0e+02|3.0e+08| 7.539593e+01| 0:8:06|3.0e+08|1.0e+00|1.0e+00| chol 1  1 
	5|0.015|0.015|1.2e+04|7.0e+02|3.1e+08| 9.171444e+01| 0:9:24|3.0e+08|1.0e+00|1.0e+00| chol 1  1 
	6|0.050|0.050|1.2e+04|6.9e+02|3.1e+08| 2.422650e+02| 0:10:41|3.0e+08|1.0e+00|9.9e-01| chol 1  1 
	7|0.191|0.191|1.1e+04|6.3e+02|3.2e+08| 1.212044e+03| 0:11:58|2.7e+08|1.0e+00|9.1e-01| chol 1  1 
	8|0.727|0.727|5.3e+03|3.0e+02|1.9e+08| 8.152693e+03| 0:13:16|1.2e+08|8.3e-01|3.6e-01| chol 1  1 
	9|0.976|0.976|1.7e+02|9.9e+00|6.3e+06| 1.288082e+04| 0:14:33|6.0e+06|8.1e-01|1.1e-02| chol 1  1 
	10|0.977|0.977|3.7e+00|2.1e-01|1.6e+05| 1.275300e+04| 0:15:51|6.9e+04|8.2e-01|2.4e-04| chol 1  1 
	11|0.613|0.613|1.2e+00|6.8e-02|4.1e+04| 6.302459e+03| 0:17:09|2.3e+03|1.1e+00|1.1e-04| chol 1  1 
	12|0.676|0.676|4.7e-01|2.8e-02|1.3e+04| 2.547199e+03| 0:18:27|1.0e+02|1.4e+00|5.4e-05| chol 1  1 
	13|0.769|0.769|1.1e-01|8.2e-03|2.5e+03| 6.286342e+02| 0:19:45|3.1e+00|1.8e+00|1.7e-05| chol 1  1 
	14|0.931|0.931|8.2e-03|3.4e-03|1.6e+02| 3.485732e+01| 0:21:04|1.3e+00|2.0e+00|1.3e-06| chol 1  1 
	15|0.888|0.888|1.2e-03|3.7e-03|2.7e+01| 2.136672e-01| 0:22:23|2.3e-01|2.0e+00|2.0e-07| chol 1  1 
	16|0.976|0.976|3.0e-04|2.8e-03|7.3e+00| 1.147244e+00| 0:23:42|2.4e-02|2.0e+00|4.9e-08| chol 1  1 
	17|0.935|0.935|6.8e-05|2.4e-03|1.7e+00| 4.517245e-01| 0:25:02|6.4e-03|2.0e+00|1.1e-08| chol 1  1 
	18|1.000|1.000|1.5e-05|2.1e-03|3.7e-01| 3.891413e-01| 0:26:20|1.2e-03|2.0e+00|2.5e-09| chol 1  1 
	19|0.876|0.876|2.2e-06|1.9e-03|5.3e-02| 3.989929e-01| 0:27:40|3.8e-04|2.0e+00|3.6e-10| chol 1  1 
	20|0.923|0.923|3.3e-07|1.7e-03|7.9e-03| 3.922092e-01| 0:28:59|6.4e-05|2.0e+00|5.3e-11| chol 1  1 
	21|0.961|0.961|3.7e-08|1.5e-04|9.2e-04| 2.958229e-01| 0:30:19|7.9e-06|2.0e+00|6.1e-12| chol 1  1 
	22|0.938|0.938|5.1e-09|1.3e-05|1.3e-04| 2.875608e-01| 0:31:38|1.1e-06|2.0e+00|8.3e-13| chol 1  1 
	23|0.931|0.931|1.5e-09|1.1e-06|3.2e-05| 2.868301e-01| 0:32:56|1.6e-07|2.0e+00|2.1e-13| chol 1  1 
	24|0.792|0.792|1.1e-09|2.4e-07|9.6e-06| 2.867796e-01| 0:34:14|5.2e-08|2.0e+00|6.1e-14| chol 1  1 
	25|0.870|0.870|1.9e-09|3.3e-08|2.0e-06| 2.867678e-01| 0:35:31|1.3e-08|2.0e+00|1.3e-14| chol 1  1 
	26|0.854|0.854|2.9e-09|5.0e-09|4.2e-07| 2.867663e-01| 0:36:49|3.1e-09|2.0e+00|2.4e-15| chol 1  1 
	27|0.931|0.931|2.6e-09|3.6e-10|4.3e-08| 2.867661e-01| 0:38:07|4.9e-10|2.0e+00|2.3e-16| chol 1  1 
	28|0.950|0.950|2.3e-09|2.0e-11|2.9e-09| 2.867660e-01| 0:39:25|5.4e-11|2.0e+00|1.0e-17| chol 1  1 
	29|0.958|0.958|2.2e-09|9.5e-13|1.7e-10| 2.867660e-01| 0:40:42|4.3e-12|2.0e+00|0.0e+00| chol
	SMW too ill-conditioned, switch to LU factor, 5.4e+25.
	switch to LU factor lu  1  1 
	30|0.911|0.911|5.3e-09|1.2e-13|2.0e-11| 2.867660e-01| 0:42:00|4.9e-13|2.0e+00|1.4e-16|
	lack of progress in infeas
	-------------------------------------------------------------------
	number of iterations   = 30
	primal objective value =  2.86766043e-01
	dual   objective value =  2.86766043e-01
	gap := trace(XZ)       = 1.69e-10
	relative gap           = 1.31e-10
	actual relative gap    = -3.12e-10
	rel. primal infeas     = 2.19e-09
	rel. dual   infeas     = 9.46e-13
	norm(X), norm(y), norm(Z) = 5.0e+02, 9.3e-02, 8.4e-02
	norm(A), norm(b), norm(C) = 7.0e+02, 4.0e+02, 1.2e-01
	Total CPU time (secs)  = 2520.29  
	CPU time per iteration = 84.01  
	termination code       = -9
	DIMACS: 2.2e-09  0.0e+00  9.5e-13  0.0e+00  -3.1e-10  1.1e-10
	-------------------------------------------------------------------
	
	result = 
	
	yalmiptime: 90.7650000000003
	solvertime: 2526.997
	info: 'Lack of progress (SDPT3-4)'
	problem: 5

	optval =
	
	-0.286766043404821

	degP =
	
	799
	
	orthogonalizing...
	Elapsed time is 191.308743 seconds.
	solving...
	
	num. of constraints = 1600
	dim. of sdp    var  = 1600,   num. of sdp  blk  =  4
	dim. of free   var  = 800
	*** convert ublk to linear blk
	********************************************************************************************
	SDPT3: homogeneous self-dual path-following algorithms
	********************************************************************************************
	version  predcorr  gam  expon
	HKM      1      0.000   1
	it pstep dstep pinfeas dinfeas  gap     mean(obj)    cputime    kap   tau    theta
	--------------------------------------------------------------------------------------------
	0|0.000|0.000|1.5e+04|8.0e+02|4.2e+08| 6.581517e-04| 0:5:20|4.2e+08|1.0e+00|1.0e+00| chol 1  1 
	1|0.000|0.000|1.5e+04|8.0e+02|4.2e+08| 3.099542e+00| 0:7:15|4.2e+08|1.0e+00|1.0e+00| chol 1  1 
	2|0.000|0.000|1.5e+04|8.0e+02|4.2e+08| 1.633093e+01| 0:9:29|4.2e+08|1.0e+00|1.0e+00| chol 1  1 
	3|0.001|0.001|1.5e+04|8.0e+02|4.2e+08| 4.572594e+01| 0:11:43|4.2e+08|1.0e+00|1.0e+00| chol 1  1 
	4|0.003|0.003|1.5e+04|8.0e+02|4.2e+08| 9.447884e+01| 0:13:57|4.2e+08|1.0e+00|1.0e+00| chol 1  1 
	5|0.010|0.010|1.5e+04|8.0e+02|4.2e+08| 1.097426e+02| 0:16:11|4.2e+08|1.0e+00|1.0e+00| chol 1  1 
	6|0.035|0.035|1.5e+04|7.9e+02|4.3e+08| 2.260301e+02| 0:18:24|4.2e+08|1.0e+00|9.9e-01| chol 1  1 
	7|0.128|0.128|1.4e+04|7.6e+02|4.5e+08| 8.945551e+02| 0:20:37|4.0e+08|1.0e+00|9.5e-01| chol 1  1 
	8|0.490|0.490|1.2e+04|6.2e+02|4.9e+08| 5.833452e+03| 0:22:50|2.8e+08|8.2e-01|6.3e-01| chol 1  1 
	9|0.953|0.953|9.5e+02|5.1e+01|4.2e+07| 1.513033e+04| 0:25:04|2.8e+07|7.7e-01|4.9e-02| chol 1  1 
	10|0.986|0.986|1.3e+01|7.1e-01|6.3e+05| 1.584639e+04| 0:27:18|3.4e+05|7.7e-01|6.8e-04| chol 1  1 
	11|0.814|0.814|2.0e+00|1.1e-01|1.0e+05| 1.184171e+04| 0:29:32|6.5e+03|9.0e-01|1.2e-04| chol 1  1 
	12|0.625|0.625|9.3e-01|5.1e-02|3.4e+04| 5.184908e+03| 0:31:44|3.2e+02|1.2e+00|7.5e-05| chol 1  1 
	13|0.705|0.705|3.2e-01|1.8e-02|9.5e+03| 1.943770e+03| 0:33:57|1.3e+01|1.5e+00|3.3e-05| chol 1  1 
	14|0.923|0.923|2.4e-02|3.8e-03|5.4e+02| 1.350805e+02| 0:36:11|2.0e+00|1.9e+00|3.1e-06| chol 1  1 
	15|0.922|0.922|2.1e-03|3.0e-03|4.4e+01| 3.865984e+00| 0:38:25|3.9e-01|2.0e+00|2.7e-07| chol 1  1 
	16|0.946|0.946|6.3e-04|2.8e-03|1.7e+01| 2.419065e+00| 0:40:39|4.6e-02|2.0e+00|8.5e-08| chol 1  1 
	17|0.930|0.930|1.2e-04|2.5e-03|3.2e+00| 5.008395e-01| 0:42:52|1.3e-02|2.0e+00|1.6e-08| chol 1  1 
	18|0.995|0.995|2.2e-05|2.1e-03|6.1e-01| 3.876770e-01| 0:45:06|2.1e-03|2.0e+00|2.9e-09| chol 1  1 
	19|0.938|0.938|3.1e-06|1.9e-03|8.7e-02| 4.024786e-01| 0:47:20|4.9e-04|2.0e+00|4.2e-10| chol 1  1 
	20|0.918|0.918|4.8e-07|1.7e-03|1.3e-02| 3.987830e-01| 0:49:34|9.0e-05|2.0e+00|6.4e-11| chol 1  1 
	21|0.970|0.970|5.1e-08|1.4e-04|1.4e-03| 2.953410e-01| 0:51:48|1.1e-05|2.0e+00|6.9e-12| chol 1  1 
	22|0.964|0.964|6.2e-09|9.1e-06|1.7e-04| 2.872059e-01| 0:54:02|1.3e-06|2.0e+00|8.2e-13| chol 1  1 
	23|0.950|0.950|2.5e-09|6.6e-07|3.7e-05| 2.866748e-01| 0:56:16|1.7e-07|2.0e+00|1.7e-13| chol 1  1 
	24|0.788|0.788|2.8e-09|1.5e-07|1.2e-05| 2.866453e-01| 0:58:30|5.4e-08|2.0e+00|5.3e-14| chol 1  1 
	25|0.774|0.774|3.7e-09|3.5e-08|3.7e-06| 2.866390e-01| 1:00:45|1.8e-08|2.0e+00|1.6e-14| chol 1  1 
	26|0.819|0.819|6.3e-09|6.6e-09|1.0e-06| 2.866375e-01| 1:03:00|5.1e-09|2.0e+00|4.1e-15| chol 1  1 
	27|0.804|0.804|7.4e-09|1.4e-09|3.0e-07| 2.866372e-01| 1:05:13|1.5e-09|2.0e+00|1.1e-15| chol 1  1 
	28|0.823|0.823|1.1e-08|2.7e-10|7.8e-08| 2.866372e-01| 1:07:27|4.2e-10|2.0e+00|2.6e-16| chol 1  1 
	29|0.847|0.847|1.6e-08|4.6e-11|1.7e-08| 2.866372e-01| 1:09:40|1.1e-10|2.0e+00|4.5e-17| chol 1  1 
	30|0.912|0.912|1.9e-08|4.6e-12|2.1e-09| 2.866372e-01| 1:11:53|1.9e-11|2.0e+00|0.0e+00| chol
	SMW too ill-conditioned, switch to LU factor, 7.9e+26.
	switch to LU factor lu  1  1 
	31|0.953|0.953|1.9e-08|2.6e-13|1.4e-10| 2.866372e-01| 1:14:07|2.2e-12|2.0e+00|1.6e-17| lu  1  1 
	32|0.979|0.979|2.8e-08|2.1e-13|7.1e-12| 2.866372e-01| 1:16:23|1.3e-13|2.0e+00|5.1e-16| lu  1  1 
	33|0.525|0.525|3.0e-08|2.3e-13|5.4e-12| 2.866372e-01| 1:18:41|6.4e-14|2.0e+00|5.7e-16|
	Stop: relative gap < infeasibility
	-------------------------------------------------------------------
	number of iterations   = 33
	primal objective value =  2.86637256e-01
	dual   objective value =  2.86637195e-01
	gap := trace(XZ)       = 5.45e-12
	relative gap           = 4.23e-12
	actual relative gap    = 3.90e-08
	rel. primal infeas     = 3.04e-08
	rel. dual   infeas     = 2.29e-13
	norm(X), norm(y), norm(Z) = 5.4e+02, 8.7e-02, 7.9e-02
	norm(A), norm(b), norm(C) = 8.0e+02, 4.3e+02, 1.1e-01
	Total CPU time (secs)  = 4720.95  
	CPU time per iteration = 143.06  
	termination code       = -1
	DIMACS: 3.0e-08  0.0e+00  2.3e-13  0.0e+00  3.9e-08  3.5e-12
	-------------------------------------------------------------------
	
	result = 
	
	yalmiptime: 345.45
	solvertime: 4732.461
	info: 'Lack of progress (SDPT3-4)'
	problem: 5

	optval =
	
	-0.28663719501358

	899
	
	orthogonalizing...
	Elapsed time is 308.666969 seconds.
	solving...
	
	num. of constraints = 1800
	dim. of sdp    var  = 1800,   num. of sdp  blk  =  4
	dim. of free   var  = 900
	*** convert ublk to linear blk
	********************************************************************************************
	SDPT3: homogeneous self-dual path-following algorithms
	********************************************************************************************
	version  predcorr  gam  expon
	HKM      1      0.000   1
	it pstep dstep pinfeas dinfeas  gap     mean(obj)    cputime    kap   tau    theta
	--------------------------------------------------------------------------------------------
	0|0.000|0.000|1.8e+04|9.0e+02|5.6e+08|-1.271519e-03| 0:8:32|5.6e+08|1.0e+00|1.0e+00| chol 1  1 
	1|0.000|0.000|1.8e+04|9.0e+02|5.6e+08|-2.539656e+00| 0:11:33|5.6e+08|1.0e+00|1.0e+00| chol 1  1 
	2|0.000|0.000|1.8e+04|9.0e+02|5.6e+08|-2.577969e+00| 0:15:02|5.6e+08|1.0e+00|1.0e+00| chol 1  1 
	3|0.001|0.001|1.8e+04|9.0e+02|5.6e+08| 1.697725e+01| 0:18:29|5.6e+08|1.0e+00|1.0e+00| chol 1  1 
	4|0.002|0.002|1.8e+04|9.0e+02|5.6e+08| 4.890008e+01| 0:21:57|5.6e+08|1.0e+00|1.0e+00| chol 1  1 
	5|0.007|0.007|1.8e+04|9.0e+02|5.7e+08| 8.019022e+01| 0:25:24|5.6e+08|1.0e+00|1.0e+00| chol 1  1 
	6|0.024|0.024|1.8e+04|9.0e+02|5.8e+08| 1.591431e+02| 0:28:53|5.6e+08|1.0e+00|1.0e+00| chol 1  1 
	7|0.088|0.088|1.7e+04|8.7e+02|5.9e+08| 6.441472e+02| 0:32:20|5.5e+08|1.0e+00|9.7e-01| chol 1  1 
	8|0.340|0.340|1.6e+04|8.2e+02|7.4e+08| 4.106579e+03| 0:35:50|4.4e+08|8.5e-01|7.8e-01| chol 1  1 
	9|0.899|0.899|2.9e+03|1.5e+02|1.5e+08| 1.586584e+04| 0:39:17|8.3e+07|7.7e-01|1.3e-01| chol 1  1 
	10|0.986|0.986|4.1e+01|2.1e+00|2.1e+06| 1.809132e+04| 0:42:45|1.3e+06|7.7e-01|1.8e-03| chol 1  1 
	11|0.924|0.924|2.8e+00|1.4e-01|1.7e+05| 1.625715e+04| 0:46:14|1.7e+04|8.2e-01|1.3e-04| chol 1  1 
	12|0.586|0.586|1.3e+00|6.7e-02|5.5e+04| 7.200133e+03| 0:49:41|8.2e+02|1.1e+00|8.2e-05| chol 1  1 
	13|0.694|0.694|4.8e-01|2.5e-02|1.7e+04| 3.032118e+03| 0:53:07|3.5e+01|1.4e+00|3.9e-05| chol 1  1 
	14|0.853|0.853|6.8e-02|5.2e-03|1.7e+03| 4.332931e+02| 0:56:36|7.5e-01|1.9e+00|7.0e-06| chol 1  1 
	15|0.933|0.933|5.1e-03|3.0e-03|1.2e+02| 2.407374e+01| 1:00:02|8.3e-01|2.0e+00|5.6e-07| chol 1  1 
	16|0.930|0.930|7.0e-04|3.1e-03|2.0e+01|-5.918267e-01| 1:03:29|1.1e-01|2.0e+00|7.9e-08| chol 1  1 
	17|0.970|0.970|1.7e-04|2.5e-03|5.4e+00| 9.141704e-01| 1:06:58|1.4e-02|2.0e+00|1.9e-08| chol 1  1 
	18|0.943|0.943|4.1e-05|2.2e-03|1.3e+00| 4.273920e-01| 1:10:25|3.6e-03|2.0e+00|4.6e-09| chol 1  1 
	19|1.000|1.000|5.3e-06|1.9e-03|1.7e-01| 3.962825e-01| 1:13:52|7.2e-04|2.0e+00|5.9e-10| chol 1  1 
	20|0.913|0.913|8.2e-07|1.7e-03|2.6e-02| 4.037655e-01| 1:17:18|1.5e-04|2.0e+00|9.2e-11| chol 1  1 
	21|0.968|0.968|8.2e-08|2.1e-04|2.6e-03| 3.005080e-01| 1:20:46|1.9e-05|2.0e+00|9.1e-12| chol 1  1 
	22|0.968|0.968|1.5e-08|1.4e-05|4.7e-04| 2.874583e-01| 1:24:14|2.0e-06|2.0e+00|1.7e-12| chol 1  1 
	23|0.920|0.920|3.3e-09|1.5e-06|9.6e-05| 2.866384e-01| 1:27:42|4.0e-07|2.0e+00|3.3e-13| chol 1  1 
	24|0.865|0.865|1.7e-09|2.2e-07|2.1e-05| 2.865609e-01| 1:31:08|1.0e-07|2.0e+00|7.2e-14| chol 1  1 
	25|0.865|0.865|1.3e-09|3.0e-08|3.9e-06| 2.865503e-01| 1:34:35|2.4e-08|2.0e+00|1.3e-14| chol 1  1 
	26|0.942|0.942|1.1e-09|1.9e-09|3.4e-07| 2.865488e-01| 1:38:04|3.4e-09|2.0e+00|1.0e-15| chol 1  1 
	27|0.960|0.960|7.5e-10|9.4e-11|1.9e-08| 2.865487e-01| 1:41:30|3.2e-10|2.0e+00|5.5e-17| chol 1  1 
	28|0.969|0.969|5.6e-10|4.2e-12|9.1e-10| 2.865487e-01| 1:44:57|2.0e-11|2.0e+00|2.1e-18| chol
	SMW too ill-conditioned, switch to LU factor, 2.3e+26.
	switch to LU factor lu  1  1 
	29|0.974|0.974|5.4e-10|2.2e-13|3.9e-11| 2.865487e-01| 1:48:22|1.0e-12|2.0e+00|3.7e-18| lu  1  1 
	stop: primal infeas has deteriorated too much, 2.8e-08  1, 0, 0
	30|0.923|0.923|5.4e-10|2.2e-13|3.9e-11| 2.865487e-01| 1:51:48|1.0e-12|2.0e+00|3.7e-18|
	-------------------------------------------------------------------
	number of iterations   = 30
	primal objective value =  2.86548690e-01
	dual   objective value =  2.86548689e-01
	gap := trace(XZ)       = 3.91e-11
	relative gap           = 3.04e-11
	actual relative gap    = 2.22e-10
	rel. primal infeas     = 5.37e-10
	rel. dual   infeas     = 2.18e-13
	norm(X), norm(y), norm(Z) = 5.7e+02, 7.7e-02, 7.4e-02
	norm(A), norm(b), norm(C) = 9.0e+02, 4.6e+02, 1.0e-01
	Total CPU time (secs)  = 6708.36  
	CPU time per iteration = 223.61  
	termination code       = -7
	DIMACS: 5.4e-10  0.0e+00  2.2e-13  0.0e+00  2.2e-10  2.5e-11
	-------------------------------------------------------------------
	
	result = 
	
	yalmiptime: 1415.095
	solvertime: 6723.799
	info: 'Numerical problems (SDPT3-4)'
	problem: 4

	optval =
	
	-0.286548689387625

	999
	
	optval =
	
	-0.287279082882749
	
	orthogonalizing...
	Elapsed time is 871.044751 seconds.
	solving...
	
	num. of constraints = 2000
	dim. of sdp    var  = 2000,   num. of sdp  blk  =  4
	dim. of free   var  = 1000
	*** convert ublk to linear blk
	********************************************************************************************
	SDPT3: homogeneous self-dual path-following algorithms
	********************************************************************************************
	version  predcorr  gam  expon
	HKM      1      0.000   1
	it pstep dstep pinfeas dinfeas  gap     mean(obj)    cputime    kap   tau    theta
	--------------------------------------------------------------------------------------------
	0|0.000|0.000|2.1e+04|1.0e+03|7.3e+08| 2.021734e-03| 0:13:07|7.3e+08|1.0e+00|1.0e+00| chol 1  1 
	1|0.000|0.000|2.1e+04|1.0e+03|7.3e+08|-2.546908e+00| 0:17:41|7.3e+08|1.0e+00|1.0e+00| chol 1  1 
	2|0.000|0.000|2.1e+04|1.0e+03|7.3e+08|-2.923984e+00| 0:22:53|7.3e+08|1.0e+00|1.0e+00| chol 1  1 
	3|0.001|0.001|2.1e+04|1.0e+03|7.3e+08| 9.703977e+00| 0:28:05|7.3e+08|1.0e+00|1.0e+00| chol 1  1 
	4|0.002|0.002|2.1e+04|1.0e+03|7.3e+08| 4.761057e+01| 0:33:19|7.3e+08|1.0e+00|1.0e+00| chol 1  1 
	5|0.006|0.006|2.1e+04|1.0e+03|7.4e+08| 8.208923e+01| 0:38:32|7.3e+08|1.0e+00|1.0e+00| chol 1  1 
	6|0.019|0.019|2.1e+04|1.0e+03|7.5e+08| 1.374200e+02| 0:43:47|7.3e+08|1.0e+00|1.0e+00| chol 1  1 
	7|0.069|0.069|2.1e+04|9.8e+02|7.7e+08| 5.282647e+02| 0:48:58|7.2e+08|1.0e+00|9.8e-01| chol 1  1 
	8|0.262|0.262|2.1e+04|9.8e+02|9.9e+08| 3.197866e+03| 0:54:11|6.3e+08|8.6e-01|8.5e-01| chol 1  1 
	9|0.823|0.823|6.5e+03|3.1e+02|3.7e+08| 1.602779e+04| 0:59:23|1.9e+08|7.6e-01|2.3e-01| chol 1  1 
	10|0.983|0.983|1.2e+02|5.8e+00|7.1e+06| 2.069108e+04| 1:04:35|4.1e+06|7.6e-01|4.4e-03| chol 1  1 
	11|0.965|0.965|4.1e+00|2.0e-01|2.8e+05| 1.995799e+04| 1:09:46|5.0e+04|7.7e-01|1.5e-04| chol 1  1 
	12|0.580|0.580|1.7e+00|8.1e-02|8.1e+04| 9.592240e+03| 1:14:58|2.0e+03|1.1e+00|8.5e-05| chol 1  1 
	13|0.686|0.686|6.4e-01|3.2e-02|2.5e+04| 4.260482e+03| 1:20:10|8.8e+01|1.4e+00|4.2e-05| chol 1  1 
	14|0.769|0.769|1.5e-01|8.7e-03|4.7e+03| 1.099063e+03| 1:25:22|2.7e+00|1.7e+00|1.3e-05| chol 1  1 
	15|0.915|0.915|1.3e-02|3.2e-03|3.4e+02| 8.578688e+01| 1:30:35|1.8e+00|2.0e+00|1.2e-06| chol 1  1 
	16|0.906|0.906|1.4e-03|2.7e-03|3.5e+01| 1.966129e+00| 1:35:48|2.9e-01|2.0e+00|1.3e-07| chol 1  1 
	17|0.957|0.957|3.5e-04|2.5e-03|1.2e+01| 1.664667e+00| 1:41:00|2.9e-02|2.0e+00|3.3e-08| chol 1  1 
	18|0.952|0.952|6.6e-05|2.2e-03|2.3e+00| 4.738773e-01| 1:46:12|7.0e-03|2.0e+00|6.3e-09| chol 1  1 
	19|1.000|1.000|1.4e-05|1.9e-03|4.8e-01| 3.916639e-01| 1:51:24|1.1e-03|2.0e+00|1.3e-09| chol 1  1 
	20|0.883|0.883|2.0e-06|1.8e-03|7.0e-02| 4.054895e-01| 1:56:37|3.4e-04|2.0e+00|1.9e-10| chol 1  1 
	21|0.927|0.927|3.1e-07|1.6e-03|1.1e-02| 3.995333e-01| 2:01:49|5.7e-05|2.0e+00|2.9e-11| chol 1  1 
	22|0.965|0.965|3.1e-08|1.3e-04|1.1e-03| 2.958004e-01| 2:07:02|7.1e-06|2.0e+00|3.0e-12| chol 1  1 
	23|0.963|0.963|5.0e-09|8.6e-06|1.7e-04| 2.870875e-01| 2:12:14|7.9e-07|2.0e+00|4.7e-13| chol 1  1 
	24|0.888|0.888|1.9e-09|1.1e-06|4.3e-05| 2.865638e-01| 2:17:26|1.7e-07|2.0e+00|1.1e-13| chol 1  1 
	25|0.836|0.836|1.0e-09|1.9e-07|1.1e-05| 2.865000e-01| 2:22:36|4.5e-08|2.0e+00|2.9e-14| chol 1  1 
	26|0.879|0.879|8.8e-10|2.5e-08|1.9e-06| 2.864889e-01| 2:27:48|1.0e-08|2.0e+00|4.8e-15| chol 1  1 
	27|0.948|0.948|8.8e-10|1.4e-09|1.5e-07| 2.864874e-01| 2:33:01|1.4e-09|2.0e+00|3.4e-16| chol 1  1 
	28|0.959|0.959|6.7e-10|6.9e-11|8.5e-09| 2.864873e-01| 2:38:13|1.3e-10|2.0e+00|1.8e-17| chol 1  1 
	29|0.967|0.967|4.6e-10|3.3e-12|4.3e-10| 2.864873e-01| 2:43:25|8.4e-12|2.0e+00|4.1e-19| chol
	SMW too ill-conditioned, switch to LU factor, 1.3e+26.
	switch to LU factor lu  1  1 
	30|0.966|0.966|3.9e-10|2.2e-13|2.4e-11| 2.864873e-01| 2:48:37|5.0e-13|2.0e+00|2.2e-18| lu  1  1 
	stop: primal infeas has deteriorated too much, 2.5e-08  1, 0, 0
	31|0.624|0.624|3.9e-10|2.2e-13|2.4e-11| 2.864873e-01| 2:53:49|5.0e-13|2.0e+00|2.2e-18|
	-------------------------------------------------------------------
	number of iterations   = 31
	primal objective value =  2.86487319e-01
	dual   objective value =  2.86487319e-01
	gap := trace(XZ)       = 2.43e-11
	relative gap           = 1.89e-11
	actual relative gap    = 2.01e-10
	rel. primal infeas     = 3.91e-10
	rel. dual   infeas     = 2.15e-13
	norm(X), norm(y), norm(Z) = 6.0e+02, 7.7e-02, 7.0e-02
	norm(A), norm(b), norm(C) = 1.0e+03, 4.8e+02, 9.9e-02
	Total CPU time (secs)  = 10428.67  
	CPU time per iteration = 336.41  
	termination code       = -7
	DIMACS: 3.9e-10  0.0e+00  2.2e-13  0.0e+00  2.0e-10  1.5e-11
	-------------------------------------------------------------------
	
	result = 
	
	yalmiptime: 6902.138
	solvertime: 10505.128
	info: 'Numerical problems (SDPT3-4)'
	problem: 4

	optval =
	
	-0.286487318689617
	
}


\deletethis{399

orthogonalizing...
Elapsed time is 22.221395 seconds.
solving...
SeDuMi 1.34 (beta) by AdvOL, 2005-2008 and Jos F. Sturm, 1998-2003.
Alg = 2: xz-corrector, theta = 0.250, beta = 0.500
eqs m = 800, order n = 803, dim = 160402, blocks = 6
nnz(A) = 32119012 + 0, nnz(ADA) = 640000, nnz(L) = 320400
it :     b*y       gap    delta  rate   t/tP*  t/tD*   feas cg cg  prec
0 :            5.92E-02 0.000
1 :   7.76E+00 5.17E-02 0.000 0.8720 0.9000 0.9000   0.28  1  1  5.3E+00
2 :   1.24E+02 2.20E-02 0.000 0.4255 0.9000 0.9000  -0.66  1  1  4.6E+00
3 :   1.76E+02 4.06E-03 0.000 0.1846 0.9000 0.9000   0.03  1  1  1.1E+00
4 :   5.49E+01 1.46E-03 0.000 0.3587 0.9000 0.9000   3.30  1  1  1.4E-01
5 :   1.13E+01 7.08E-04 0.000 0.4864 0.9000 0.9000   4.28  1  1  2.6E-02
6 :   2.12E+00 1.92E-04 0.000 0.2705 0.9000 0.9000   2.09  1  1  4.6E-03
7 :   6.76E-01 4.40E-05 0.000 0.2298 0.9000 0.9000   1.29  1  1  9.4E-04
8 :   3.26E-01 4.18E-06 0.000 0.0950 0.9900 0.9900   1.09  1  1  8.5E-05
9 :   3.01E-01 1.38E-06 0.000 0.3293 0.9000 0.9000   1.02  1  1  2.8E-05
10 :   2.91E-01 3.82E-07 0.000 0.2771 0.9000 0.9000   1.01  1  1  7.7E-06
11 :   2.89E-01 1.09E-07 0.000 0.2855 0.9000 0.9000   1.00  1  1  2.2E-06
12 :   2.88E-01 2.95E-08 0.000 0.2711 0.9000 0.9000   1.00  2  2  5.9E-07
13 :   2.88E-01 6.78E-09 0.000 0.2296 0.9000 0.9000   1.00  2  2  1.4E-07
14 :   2.88E-01 1.35E-09 0.000 0.1998 0.9000 0.9000   1.00  2  2  2.7E-08
15 :   2.88E-01 2.86E-10 0.000 0.2112 0.9000 0.9000   1.00  2  3  5.7E-09
16 :   2.88E-01 6.21E-11 0.000 0.2169 0.9000 0.9000   1.00  3  3  1.2E-09
17 :   2.88E-01 1.48E-11 0.000 0.2387 0.9000 0.9000   1.00  3  3  3.0E-10
18 :   2.88E-01 3.72E-12 0.000 0.2513 0.9000 0.9000   1.00  4  4  7.5E-11
19 :   2.88E-01 9.47E-13 0.000 0.2546 0.9000 0.9000   1.00  6  6  1.9E-11
20 :   2.88E-01 2.39E-13 0.000 0.2521 0.9000 0.9000   1.00  8  7  4.8E-12
21 :   2.88E-01 6.01E-14 0.000 0.2516 0.9000 0.9000   1.00 11 12  1.2E-12
Run into numerical problems.

iter seconds digits       c*x               b*y
21    675.4   Inf  2.8785580128e-01  2.8785580128e-01
|Ax-b| =   2.3e-10, [Ay-c]_+ =   1.7E-13, |x|=  4.2e+02, |y|=  1.2e-01

Detailed timing (sec)
Pre          IPM          Post
1.513E+00    5.449E+02    1.710E-01    
Max-norms: ||b||=34, ||c|| = 7.873605e-03,
Cholesky |add|=0, |skip| = 34, ||L.L|| = 27.3769.

result = 

yalmiptime: 7.36300000000006
solvertime: 547.358
info: 'Numerical problems (SeDuMi-1.3)'
problem: 4

optval =

-0.287855801280624	
}

\deletethis{499
	
	orthogonalizing...
	Elapsed time is 44.863753 seconds.
	solving...
	SeDuMi 1.34 (beta) by AdvOL, 2005-2008 and Jos F. Sturm, 1998-2003.
	Alg = 2: xz-corrector, theta = 0.250, beta = 0.500
	eqs m = 1000, order n = 1003, dim = 250502, blocks = 6
	nnz(A) = 62687252 + 0, nnz(ADA) = 1000000, nnz(L) = 500500
	it :     b*y       gap    delta  rate   t/tP*  t/tD*   feas cg cg  prec
	0 :            4.74E-02 0.000
	1 :   6.74E+00 4.21E-02 0.000 0.8879 0.9000 0.9000   0.28  1  1  5.3E+00
	2 :   1.17E+02 1.92E-02 0.000 0.4560 0.9000 0.9000  -0.70  1  1  4.9E+00
	3 :   2.35E+02 1.78E-03 0.000 0.0928 0.9900 0.9900  -0.19  1  1  7.3E-01
	4 :   4.36E+01 7.45E-04 0.000 0.4187 0.9000 0.9000   5.16  1  1  7.2E-02
	5 :   7.31E+00 3.28E-04 0.000 0.4403 0.9000 0.9000   4.11  1  1  1.2E-02
	6 :   1.63E+00 8.54E-05 0.000 0.2603 0.9000 0.9000   1.71  1  1  2.4E-03
	7 :   5.66E-01 1.87E-05 0.000 0.2189 0.9000 0.9000   1.22  1  1  4.8E-04
	8 :   3.13E-01 1.65E-06 0.000 0.0885 0.9900 0.9900   1.07  1  1  4.1E-05
	9 :   2.95E-01 5.14E-07 0.000 0.3108 0.9000 0.9000   1.01  1  1  1.3E-05
	10 :   2.89E-01 1.42E-07 0.000 0.2752 0.9000 0.9000   1.01  1  1  3.5E-06
	11 :   2.88E-01 4.01E-08 0.000 0.2833 0.9000 0.9000   1.00  2  2  9.9E-07
	12 :   2.87E-01 1.10E-08 0.000 0.2740 0.9000 0.9000   1.00  2  2  2.7E-07
	13 :   2.87E-01 2.43E-09 0.000 0.2210 0.9000 0.9000   1.00  2  2  6.0E-08
	14 :   2.87E-01 4.90E-10 0.000 0.2019 0.9000 0.9000   1.00  2  2  1.2E-08
	15 :   2.87E-01 9.87E-11 0.000 0.2013 0.9000 0.9000   1.00  3  3  2.4E-09
	16 :   2.87E-01 1.96E-11 0.000 0.1990 0.9000 0.9000   1.00  3  4  4.8E-10
	17 :   2.87E-01 4.27E-12 0.000 0.2172 0.9000 0.9000   1.00  4  4  1.1E-10
	18 :   2.87E-01 9.96E-13 0.000 0.2335 0.9000 0.9000   1.00  6  6  2.5E-11
	19 :   2.87E-01 2.43E-13 0.000 0.2443 0.9000 0.9000   1.00 10 10  6.0E-12
	Run into numerical problems.
	
	iter seconds digits       c*x               b*y
	19   1296.9   Inf  2.8727908622e-01  2.8727908640e-01
	|Ax-b| =   1.1e-09, [Ay-c]_+ =   9.1E-13, |x|=  4.5e+02, |y|=  1.1e-01
	
	Detailed timing (sec)
	Pre          IPM          Post
	3.073E+00    1.123E+03    2.800E-01    
	Max-norms: ||b||=34, ||c|| = 6.295746e-03,
	Cholesky |add|=0, |skip| = 1, ||L.L|| = 73.0163.
	
	result = 
	
	yalmiptime: 14.883
	solvertime: 1128.069
	info: 'Numerical problems (SeDuMi-1.3)'
	problem: 4

	optval =
	
	-0.287279086398449
}

\deletethis{599
	
orthogonalizing...
Elapsed time is 79.158208 seconds.
solving...
SeDuMi 1.34 (beta) by AdvOL, 2005-2008 and Jos F. Sturm, 1998-2003.
Alg = 2: xz-corrector, theta = 0.250, beta = 0.500
eqs m = 1200, order n = 1203, dim = 360602, blocks = 6
nnz(A) = 108269702 + 0, nnz(ADA) = 1440000, nnz(L) = 720600
it :     b*y       gap    delta  rate   t/tP*  t/tD*   feas cg cg  prec
0 :            3.95E-02 0.000
1 :   6.01E+00 3.55E-02 0.000 0.8993 0.9000 0.9000   0.28  1  1  5.3E+00
2 :   1.11E+02 1.70E-02 0.000 0.4800 0.9000 0.9000  -0.73  1  1  5.0E+00
3 :   2.43E+02 3.63E-03 0.000 0.2128 0.9000 0.9000  -0.33  1  1  1.8E+00
4 :   1.19E+02 1.56E-03 0.000 0.4312 0.9000 0.9000   1.99  1  1  4.5E-01
5 :   3.75E+01 5.39E-04 0.000 0.3447 0.9000 0.9000   3.32  1  1  5.8E-02
6 :   6.89E+00 2.37E-04 0.000 0.4392 0.9000 0.9000   3.81  1  1  1.1E-02
7 :   1.53E+00 6.11E-05 0.000 0.2581 0.9000 0.9000   1.74  1  1  2.1E-03
8 :   5.49E-01 1.35E-05 0.000 0.2216 0.9000 0.9000   1.22  1  1  4.2E-04
9 :   3.11E-01 1.21E-06 0.000 0.0895 0.9900 0.9900   1.07  1  1  3.6E-05
10 :   2.94E-01 3.50E-07 0.000 0.2891 0.9000 0.9000   1.01  1  1  1.0E-05
11 :   2.89E-01 9.16E-08 0.000 0.2616 0.9000 0.9000   1.00  2  2  2.7E-06
12 :   2.88E-01 2.71E-08 0.000 0.2956 0.9000 0.9000   1.00  2  2  8.0E-07
13 :   2.87E-01 7.63E-09 0.000 0.2817 0.9000 0.9000   1.00  2  2  2.3E-07
14 :   2.87E-01 1.69E-09 0.000 0.2213 0.9000 0.9000   1.00  2  3  5.0E-08
15 :   2.87E-01 4.05E-10 0.000 0.2397 0.9000 0.9000   1.00  3  3  1.2E-08
16 :   2.87E-01 9.03E-11 0.000 0.2232 0.9000 0.9000   1.00  3  3  2.7E-09
17 :   2.87E-01 2.23E-11 0.000 0.2467 0.9000 0.9000   1.00  3  3  6.6E-10
18 :   2.87E-01 5.97E-12 0.000 0.2680 0.9000 0.9000   1.00  6  6  1.8E-10
19 :   2.87E-01 1.65E-12 0.000 0.2756 0.9000 0.9000   1.00  6  7  4.9E-11
20 :   2.87E-01 4.56E-13 0.000 0.2773 0.9000 0.9000   1.00 12 12  1.3E-11
Run into numerical problems.

iter seconds digits       c*x               b*y
20   2751.3   Inf  2.8695938649e-01  2.8695938650e-01
|Ax-b| =   2.6e-09, [Ay-c]_+ =   2.0E-12, |x|=  5.1e+02, |y|=  9.8e-02

Detailed timing (sec)
Pre          IPM          Post
5.476E+00    2.447E+03    4.680E-01    
Max-norms: ||b||=34, ||c|| = 5.244711e-03,
Cholesky |add|=0, |skip| = 2, ||L.L|| = 7.84548.

result = 

yalmiptime: 26.6289999999999
solvertime: 2455.591
info: 'Numerical problems (SeDuMi-1.3)'
problem: 4

optval =

-0.286959386495609
}

\deletethis{699
	
orthogonalizing...
Elapsed time is 130.171215 seconds.
solving...
SeDuMi 1.34 (beta) by AdvOL, 2005-2008 and Jos F. Sturm, 1998-2003.
Alg = 2: xz-corrector, theta = 0.250, beta = 0.500
eqs m = 1400, order n = 1403, dim = 490702, blocks = 6
nnz(A) = 171867850 + 0, nnz(ADA) = 1960000, nnz(L) = 980700
it :     b*y       gap    delta  rate   t/tP*  t/tD*   feas cg cg  prec
0 :            3.38E-02 0.000
1 :   5.47E+00 3.07E-02 0.000 0.9080 0.9000 0.9000   0.28  1  1  5.4E+00
2 :   1.06E+02 1.53E-02 0.000 0.4996 0.9000 0.9000  -0.75  1  1  5.2E+00
3 :   2.53E+02 3.88E-03 0.000 0.2530 0.9000 0.9000  -0.43  1  1  2.3E+00
4 :   1.50E+02 1.57E-03 0.000 0.4056 0.9000 0.9000   1.40  1  1  6.5E-01
5 :   5.59E+01 5.30E-04 0.000 0.3369 0.9000 0.9000   2.95  1  1  9.2E-02
6 :   1.15E+01 2.36E-04 0.000 0.4447 0.9000 0.9000   4.06  1  1  1.5E-02
7 :   2.12E+00 6.50E-05 0.000 0.2754 0.9000 0.9000   2.13  1  1  2.7E-03
8 :   6.85E-01 1.54E-05 0.000 0.2373 0.9000 0.9000   1.29  1  1  5.6E-04
9 :   3.63E-01 2.97E-06 0.000 0.1924 0.9000 0.9000   1.10  1  1  1.0E-04
10 :   2.94E-01 2.67E-07 0.000 0.0898 0.9900 0.9900   1.03  1  1  9.2E-06
11 :   2.89E-01 8.05E-08 0.000 0.3022 0.9000 0.9000   1.00  2  2  2.8E-06
12 :   2.87E-01 2.37E-08 0.000 0.2940 0.9000 0.9000   1.00  2  2  8.1E-07
13 :   2.87E-01 6.99E-09 0.000 0.2952 0.9000 0.9000   1.00  2  2  2.4E-07
14 :   2.87E-01 1.81E-09 0.000 0.2582 0.9000 0.9000   1.00  3  3  6.2E-08
15 :   2.87E-01 4.33E-10 0.000 0.2397 0.9000 0.9000   1.00  3  3  1.5E-08
16 :   2.87E-01 9.69E-11 0.000 0.2241 0.9000 0.9000   1.00  3  3  3.4E-09
17 :   2.87E-01 1.96E-11 0.000 0.2022 0.9000 0.9000   1.00  4  4  7.1E-10
18 :   2.87E-01 4.40E-12 0.000 0.2242 0.9000 0.9000   1.00  5  6  1.6E-10
19 :   2.87E-01 1.04E-12 0.000 0.2366 0.9000 0.9000   1.00  7  8  4.1E-11
20 :   2.87E-01 2.49E-13 0.000 0.2393 0.9000 0.9000   1.00  9  9  1.0E-11
21 :   2.87E-01 6.19E-14 0.000 0.2486 0.9000 0.9000   1.00 12 10  2.7E-12
Run into numerical problems.

iter seconds digits       c*x               b*y
21   5318.1  11.1  2.8676604509e-01  2.8676604508e-01
|Ax-b| =   4.8e-10, [Ay-c]_+ =   3.0E-13, |x|=  5.4e+02, |y|=  9.3e-02

Detailed timing (sec)
Pre          IPM          Post
9.048E+00    4.833E+03    7.220E-01    
Max-norms: ||b||=34, ||c|| = 4.494399e-03,
Cholesky |add|=0, |skip| = 68, ||L.L|| = 47.3408.

result = 

yalmiptime: 43.5830000000005
solvertime: 4847.119
info: 'Numerical problems (SeDuMi-1.3)'
problem: 4

optval =

-0.286766045082906}

\deletethis{799	

orthogonalizing...
Elapsed time is 196.655906 seconds.
solving...
SeDuMi 1.34 (beta) by AdvOL, 2005-2008 and Jos F. Sturm, 1998-2003.
Alg = 2: xz-corrector, theta = 0.250, beta = 0.500
eqs m = 1600, order n = 1603, dim = 640802, blocks = 6
nnz(A) = 256478806 + 0, nnz(ADA) = 2560000, nnz(L) = 1280800
it :     b*y       gap    delta  rate   t/tP*  t/tD*   feas cg cg  prec
0 :            2.96E-02 0.000
1 :   5.04E+00 2.71E-02 0.000 0.9148 0.9000 0.9000   0.28  1  1  5.4E+00
2 :   1.01E+02 1.40E-02 0.000 0.5162 0.9000 0.9000  -0.76  1  1  5.3E+00
3 :   2.55E+02 4.06E-03 0.000 0.2905 0.9000 0.9000  -0.50  1  1  2.8E+00
4 :   1.75E+02 1.46E-03 0.000 0.3601 0.9000 0.9000   0.97  1  1  8.0E-01
5 :   6.98E+01 4.74E-04 0.000 0.3245 0.9000 0.9000   2.77  1  1  1.1E-01
6 :   1.43E+01 1.93E-04 0.000 0.4079 0.9000 0.9000   4.15  1  1  1.5E-02
7 :   2.41E+00 5.66E-05 0.000 0.2926 0.9000 0.9000   2.39  1  1  2.7E-03
8 :   7.48E-01 1.37E-05 0.000 0.2428 0.9000 0.9000   1.33  1  1  5.8E-04
9 :   3.80E-01 2.79E-06 0.000 0.2029 0.9000 0.9000   1.11  1  1  1.1E-04
10 :   2.95E-01 2.46E-07 0.000 0.0884 0.9900 0.9900   1.03  1  1  9.7E-06
11 :   2.89E-01 6.92E-08 0.000 0.2809 0.9000 0.9000   1.01  2  2  2.7E-06
12 :   2.87E-01 1.97E-08 0.000 0.2851 0.9000 0.9000   1.00  2  2  7.7E-07
13 :   2.87E-01 5.84E-09 0.000 0.2957 0.9000 0.9000   1.00  2  2  2.3E-07
14 :   2.87E-01 1.46E-09 0.000 0.2507 0.9000 0.9000   1.00  3  3  5.7E-08
15 :   2.87E-01 3.07E-10 0.000 0.2094 0.9000 0.9000   1.00  3  3  1.2E-08
16 :   2.87E-01 6.41E-11 0.000 0.2092 0.9000 0.9000   1.00  3  3  2.5E-09
17 :   2.87E-01 1.31E-11 0.000 0.2038 0.9000 0.9000   1.00  4  4  5.1E-10
18 :   2.87E-01 3.09E-12 0.000 0.2362 0.9000 0.9000   1.00  6  6  1.2E-10
19 :   2.87E-01 7.94E-13 0.000 0.2571 0.9000 0.9000   1.00  9  8  3.1E-11
20 :   2.87E-01 2.10E-13 0.000 0.2643 0.9000 0.9000   1.00 14 14  8.9E-12
21 :   2.87E-01 5.76E-14 0.000 0.2745 0.9000 0.9000   1.00 14 14  3.3E-12
Run into numerical problems.

iter seconds digits       c*x               b*y
21   9175.2  11.5  2.8663719705e-01  2.8663719705e-01
|Ax-b| =   7.2e-10, [Ay-c]_+ =   3.2E-13, |x|=  5.8e+02, |y|=  8.7e-02

Detailed timing (sec)
Pre          IPM          Post
2.570E+01    8.555E+03    1.222E+00    
Max-norms: ||b||=34, ||c|| = 3.931898e-03,
Cholesky |add|=0, |skip| = 99, ||L.L|| = 38.3453.

result = 

yalmiptime: 72.7530000000006
solvertime: 8670.214
info: 'Numerical problems (SeDuMi-1.3)'
problem: 4

optval =

-0.286637197046881
}

\deletethis{899
	
	orthogonalizing...
	Elapsed time is 377.578215 seconds.
	solving...
	SeDuMi 1.34 (beta) by AdvOL, 2005-2008 and Jos F. Sturm, 1998-2003.
	Alg = 2: xz-corrector, theta = 0.250, beta = 0.500
	eqs m = 1800, order n = 1803, dim = 810902, blocks = 6
	nnz(A) = 365107052 + 0, nnz(ADA) = 3240000, nnz(L) = 1620900
	it :     b*y       gap    delta  rate   t/tP*  t/tD*   feas cg cg  prec
	0 :            2.63E-02 0.000
	1 :   4.69E+00 2.42E-02 0.000 0.9203 0.9000 0.9000   0.28  1  1  5.4E+00
	2 :   9.68E+01 1.28E-02 0.000 0.5306 0.9000 0.9000  -0.78  1  1  5.3E+00
	3 :   2.51E+02 4.17E-03 0.000 0.3245 0.9000 0.9000  -0.56  1  1  3.1E+00
	4 :   1.99E+02 1.25E-03 0.000 0.2999 0.9000 0.9000   0.65  1  1  8.6E-01
	5 :   7.81E+01 4.00E-04 0.000 0.3200 0.9000 0.9000   2.81  1  1  1.2E-01
	6 :   1.57E+01 1.56E-04 0.000 0.3896 0.9000 0.9000   4.25  1  1  1.5E-02
	7 :   2.53E+00 4.70E-05 0.000 0.3015 0.9000 0.9000   2.53  1  1  2.5E-03
	8 :   7.76E-01 1.15E-05 0.000 0.2450 0.9000 0.9000   1.34  1  1  5.4E-04
	9 :   3.87E-01 2.39E-06 0.000 0.2077 0.9000 0.9000   1.11  1  1  1.1E-04
	10 :   2.95E-01 2.02E-07 0.000 0.0847 0.9900 0.9900   1.03  1  1  8.9E-06
	11 :   2.89E-01 6.08E-08 0.000 0.3004 0.9000 0.9000   1.01  2  2  2.7E-06
	12 :   2.87E-01 1.83E-08 0.000 0.3014 0.9000 0.9000   1.00  2  2  8.1E-07
	13 :   2.87E-01 5.62E-09 0.000 0.3067 0.9000 0.9000   1.00  3  3  2.5E-07
	14 :   2.87E-01 1.45E-09 0.000 0.2571 0.9000 0.9000   1.00  3  3  6.3E-08
	15 :   2.87E-01 2.92E-10 0.000 0.2018 0.9000 0.9000   1.00  4  4  1.3E-08
	16 :   2.87E-01 5.52E-11 0.000 0.1891 0.9000 0.9000   1.00  3  4  2.4E-09
	17 :   2.87E-01 1.09E-11 0.000 0.1983 0.9000 0.9000   1.00  5  5  4.8E-10
	18 :   2.87E-01 2.55E-12 0.000 0.2328 0.9000 0.9000   1.00  6  6  1.1E-10
	19 :   2.87E-01 6.64E-13 0.000 0.2607 0.9000 0.9000   1.00  9 10  2.9E-11
	20 :   2.87E-01 1.80E-13 0.000 0.2716 0.9000 0.9000   1.00 15 18  8.0E-12
	Run into numerical problems.
	
	iter seconds digits       c*x               b*y
	20  13148.3   Inf  2.8654869747e-01  2.8654869749e-01
	|Ax-b| =   1.9e-09, [Ay-c]_+ =   1.1E-12, |x|=  6.1e+02, |y|=  7.7e-02
	
	Detailed timing (sec)
	Pre          IPM          Post
	1.158E+02    1.270E+04    1.716E+00    
	Max-norms: ||b||=34, ||c|| = 3.494536e-03,
	Cholesky |add|=0, |skip| = 457, ||L.L|| = 56253.9.
	
	result = 
	
	yalmiptime: 270.833000000001
	solvertime: 12968.732
	info: 'Numerical problems (SeDuMi-1.3)'
	problem: 4

	optval =
	
	-0.286548697485662
}

\deletethis{999
	
	orthogonalizing...
	Elapsed time is 741.345990 seconds.
	solving...
	SeDuMi 1.34 (beta) by AdvOL, 2005-2008 and Jos F. Sturm, 1998-2003.
	Alg = 2: xz-corrector, theta = 0.250, beta = 0.500
	eqs m = 2000, order n = 2003, dim = 1.001e+06, blocks = 6
	nnz(A) = 500750500 + 0, nnz(ADA) = 4000000, nnz(L) = 2001000
	it :     b*y       gap    delta  rate   t/tP*  t/tD*   feas cg cg  prec
	0 :            2.37E-02 0.000
	1 :   4.41E+00 2.19E-02 0.000 0.9250 0.9000 0.9000   0.28  1  1  5.4E+00
	2 :   9.30E+01 1.19E-02 0.000 0.5431 0.9000 0.9000  -0.79  1  1  5.4E+00
	3 :   2.44E+02 4.22E-03 0.000 0.3549 0.9000 0.9000  -0.60  1  1  3.5E+00
	4 :   2.25E+02 9.66E-04 0.000 0.2291 0.9000 0.9000   0.41  1  1  8.2E-01
	5 :   7.94E+01 3.19E-04 0.000 0.3299 0.9000 0.9000   3.16  1  1  1.0E-01
	6 :   1.55E+01 1.26E-04 0.000 0.3951 0.9000 0.9000   4.35  1  1  1.3E-02
	7 :   2.52E+00 3.77E-05 0.000 0.2994 0.9000 0.9000   2.51  1  1  2.3E-03
	8 :   7.74E-01 9.24E-06 0.000 0.2451 0.9000 0.9000   1.34  1  1  4.8E-04
	9 :   3.88E-01 1.94E-06 0.000 0.2101 0.9000 0.9000   1.11  1  1  9.6E-05
	10 :   2.95E-01 1.67E-07 0.000 0.0862 0.9900 0.9900   1.03  1  1  8.2E-06
	11 :   2.89E-01 4.39E-08 0.000 0.2621 0.9000 0.9000   1.01  2  2  2.1E-06
	12 :   2.87E-01 1.20E-08 0.000 0.2729 0.9000 0.9000   1.00  2  2  5.8E-07
	13 :   2.87E-01 3.63E-09 0.000 0.3032 0.9000 0.9000   1.00  3  3  1.8E-07
	14 :   2.87E-01 9.14E-10 0.000 0.2518 0.9000 0.9000   1.00  4  4  4.4E-08
	15 :   2.86E-01 1.97E-10 0.000 0.2161 0.9000 0.9000   1.00  4  4  9.6E-09
	16 :   2.86E-01 3.76E-11 0.000 0.1906 0.9000 0.9000   1.00  4  4  1.8E-09
	17 :   2.86E-01 7.78E-12 0.000 0.2067 0.9000 0.9000   1.00  4  5  3.8E-10
	18 :   2.86E-01 1.74E-12 0.000 0.2230 0.9000 0.9000   1.00  7  8  8.5E-11
	19 :   2.86E-01 4.19E-13 0.000 0.2413 0.9000 0.9000   1.00 11 12  2.0E-11
	Run into numerical problems.
	
	iter seconds digits       c*x               b*y
	19  19903.8   Inf  2.8648734185e-01  2.8648734197e-01
	|Ax-b| =   5.0e-09, [Ay-c]_+ =   2.8E-12, |x|=  6.4e+02, |y|=  7.7e-02
	
	Detailed timing (sec)
	Pre          IPM          Post
	1.156E+02    1.936E+04    2.246E+00    
	Max-norms: ||b||=34, ||c|| = 3.144734e-03,
	Cholesky |add|=0, |skip| = 90, ||L.L|| = 100.978.
	
	result = 
	
	yalmiptime: 916.5170
	solvertime: 1.9875e+04
	info: 'Numerical problems (SeDuMi-1.3)'
	problem: 4

	optval =
	
	-0.286487341974258
}

\subsection{Recovering best one-sided approximations}

The following example combines the semidefinite optimization model of Example \ref{ex:poly-env} with high-degree polynomial approximation of nonpolynomial functions, and it also incorporates the ideas introduced in Section \ref{sec:upsampling}. For easier verification of the results, we pose a problem that can be solved in essentially closed form using \mbox{Proposition \ref{thm:BojanicDeVore}}.

\begin{example}\label{ex:best-lower-approximation}
	Consider the problem of finding the best polynomial lower approximation (of a given degree) of the function $f(t)=\exp(t^{100})$ defined over $[-1,1]$. It can be shown using \mbox{Propositions \ref{thm:prop1} and \ref{thm:prop2}} that the polynomial interpolant $p_{200}$ of $f$ on the 200 Chebyshev points has a maximum absolute error smaller than double machine precision. For numerical purposes, finding the best polynomial lower approximant $p$ (of a given degree lower than 200) to $f$ is equivalent to finding the optimal solution $p$ to the problem
	\begin{equation}
	\begin{split}
	\textrm{maximize}_p &\quad \int_{-1}^1 p(t) dt\\
	\textrm{subject to}\, &\quad p(t)\leq p_{200}(t)\;\; \forall\,t\in [-1,1].
	\end{split}
	\end{equation}
	
This problem can be translated to a semidefinite program nearly identically to the previous example, except that now $p$ has to be upsampled to the 200 interpolation points, using barycentric interpolation, as discussed in Section \ref{sec:upsampling}. The modified dual problem becomes
	\begin{equation}
	\begin{split}
	\textrm{minimize}_{\vy,\vz} &\quad \sum_{i,\ell} p_i(t_\ell)y_{i\ell}\\
	\textrm{subject to}\;\;\,   &\quad \sum_i y_{i\ell} + z_\ell=w_\ell\quad \forall\,\ell,\\
	&\quad \sum_\ell (1+t_\ell)A^{(\ell)} y_{i\ell} \succcurlyeq 0\quad \forall\,i,\\
	&\quad \sum_\ell (1-t_\ell)A^{(\ell)} y_{i\ell} \succcurlyeq 0\quad \forall\,i,\\
	&\quad B^\T z = 0,
	\end{split}\label{eq:envelope-lowdeg-dual}
	\end{equation}
where $B\in\real^{200\times n}$ is the barycentric interpolation matrix mapping the values of a degree-$(n-1)$ polynomial $p$ at the $n$ Chebyshev points to the values at the $200$ Chebyshev points used to represent $p_{200}$.
	
	As an example, we solved this problem to determine the optimal polynomial lower approximant of degree 49 (represented as an interpolant on the 50 Chebyshev points) using SeDuMi. As before, for the highest numerically possible accuracy, we set the SeDuMi accuracy goal \texttt{eps} to zero so that the solver iterates while it can make any progress. Finally, the points of contact of the roots of optimal approximant were determined numerically using the root finding algorithm for interpolants implemented in the Matlab toolbox Chebfun v.5.0.1 \cite{chebfun}. The obtained roots are shown in Table \ref{tbl:ex2results}.
	
	Proposition \ref{thm:BojanicDeVore} provides a characterization of the optimal solution as an Hermite interpolant on the roots of the 25-degree Legendre polynomial $L_{25}$, meaning the points of contact are the (known, and numerically precisely computable) roots of $L_{25}$, allowing us to check the accuracy of our calculations. \mbox{Table \ref{tbl:ex2results}} shows the values of the roots obtained from our optimization procedure and the correct values with machine precision accuracy. The largest absolute error of the roots was $6.16\cdot 10^{-7}$, the largest relative error (not defined for the root equal to zero) was $4.88\cdot 10^{-6}$. In other words, all roots were accurate up to at least 5 significant digits.
\end{example}

\begin{table}
	\centering
\begin{tabular}{ll}
	\toprule
	Computed point of contact & Exact point of contact\\
	\midrule
        $-$0.995556972963306    &    $-$0.995556969790498\\
        $-$0.976663935477085    &    $-$0.976663921459518\\
        $-$0.942974611506432    &    $-$0.942974571228974\\
        $-$0.894992079192159    &    $-$0.894991997878275\\
        $-$0.833442768114398    &    $-$0.833442628760834\\
        $-$0.75925946688898     &   $-$0.759259263037358\\
        $-$0.673566645603713    &    $-$0.673566368473468\\
        $-$0.57766328506073     &   $-$0.577662930241223\\
        $-$0.473003173358265    &    $-$0.473002731445715\\
        $-$0.361172781372549    &    $-$0.361172305809388\\
        $-$0.243867464225739    &    $-$0.243866883720988\\
        $-$0.122865277764643    &     $-$0.12286469261071\\
        $-$6.15973190950935 $\cdot 10^{-7}$ &   \phantom{$-$}0\\
        \phantom{$-$}0.122864093106243     &     \phantom{$-$}0.12286469261071\\
        \phantom{$-$}0.243866329210549     &    \phantom{$-$}0.243866883720988\\
        \phantom{$-$}0.361171807947986     &    \phantom{$-$}0.361172305809388\\
        \phantom{$-$}0.473002281718963     &    \phantom{$-$}0.473002731445715\\
        \phantom{$-$}0.577662568403695     &    \phantom{$-$}0.577662930241223\\
        \phantom{$-$}0.673566077288833     &    \phantom{$-$}0.673566368473468\\
        \phantom{$-$}0.759259051474652     &    \phantom{$-$}0.759259263037358\\
        \phantom{$-$}0.833442487648842     &    \phantom{$-$}0.833442628760834\\
        \phantom{$-$}0.894991911861248     &    \phantom{$-$}0.894991997878275\\
        \phantom{$-$}0.942974528231816     &    \phantom{$-$}0.942974571228974\\
        \phantom{$-$}0.976663905379173     &    \phantom{$-$}0.976663921459518\\
        \phantom{$-$}0.995556966216301     &    \phantom{$-$}0.995556969790498\\
        \bottomrule\\
\end{tabular}
\caption{Comparison of the numerically computed points of contact from Example \ref{ex:best-lower-approximation} and the exact values (shown with double machine precision accuracy) derived from Proposition \ref{thm:BojanicDeVore}. In spite of the high degree of the polynomials involved, all computed points of contact (computed as the roots of high-degree sum-of-squares interpolants) are accurate up to at least 5 significant digits.}\label{tbl:ex2results}
\end{table}


\subsection{Experimental design}\label{sec:design-linear-regression}


Problems discussed in this section are among the author's main motivation for studying semi-infinite linear programs with additional conic constraints.

The goal of optimal experimental design in general is to maximize the quality of statistical inference by collecting the right data, given limited resources. In the context of linear regression, the inference is based on a data model
\begin{equation}\label{eq:model}
y(t) = \sum_{i=1}^m \beta_i f_i(t) + \epsilon(t),
\end{equation}
where $f_1,\dots,f_m$ are known functions, and the random variable $\epsilon$ (of known probability distribution) represents measurement errors and other sources of variation unexplained by the model. In the experiment, the (noisy) values of $y$ are observed for a number of different values of $t$ chosen from the given \emph{design space} $\cI$, and the goal of the experiment is to infer the values of the unknown coefficients $\beta_i$, $i=1,\dots,m$. By a \emph{design} we mean a set of values $\{t_1, \dots, t_s\}$ for which the response $y(t_i)$ is to be measured, along with the number of repeated measurements $r_i$ to be taken at each $t_i$. Multiple measurements are allowed, since assuming independent measurement errors, the repeated measurements can help reduce our uncertainty of the ``true'' (noise-free) value of each $y(t_i)$. The problem of deciding how many (discrete) measurements to take at what points $t_i$ is a non-convex (combinatorial) optimization problem, which is commonly simplified to a convex problem by relaxing the integrality constraints on $r_i$ \cite{Fedorov-72,Pukelsheim-93}. In the resulting model one can normalize the vector $r$ by assuming $\sum_i r_i = 1$ (in addition to $r\geq 0$), so that $r_i$ represents not the number, but the fraction of experiments to be conducted at the point $t_i$.

This way, the experiment design is mathematically a finitely supported probability distribution $\xi$ satisfying $\xi = t_i$ with probability $r_i$, $i=1,\dots, s$. It is immediate that the feasible set (the set of probability measures supported on a finite subset of a given set $\cI\subseteq\real^n$) is convex.

Our goal with the experiment is to maximize our confidence in the estimated components of $\beta$, which can be quantified using the Fisher information \cite{Fedorov-72,Pukelsheim-93} that the measured values carry about $\beta$. Under the common assumption that $\epsilon(t)$ is normally distributed with mean zero and variance $\omega(t)$, we can estimate the parameters $\beta$ using ordinary least-squares. Using the notation $\vf(t) = (f_1(t), \dots, f_m(t))^\T$, the \emph{Fisher information matrix} of $\beta$ corresponding to the design $\xi$ is
\begin{equation}\label{eq:Fisher} \vM(\xi) = \int_{\mathcal{I}} \vf(t) \vf(t)^\T \omega(t) d \xi(t).\end{equation}
Of course, this integral simplifies to a finite sum for every design. Note that for every $\xi$, $\vM(\xi) \in \Smp$, therefore the optimization usually takes place with respect to some \emph{optimality criterion} $\Phi$ that measures the quality of the Fisher information matrix.
If $\Phi$ is an $\Smp\mapsto\real$ function, the design $\hat\xi$ is called \emph{optimal with respect to $\Phi$}, or \emph{$\Phi$-optimal} for short, if $\Phi(\vM(\hat\xi))$ is maximum. Only those criteria $\Phi$ are interesting that are compatible with the L{\"o}wner partial order, that is functions $\Phi$ satisfying $\Phi(\vA) \geq \Phi(\vB)$ whenever $\vA \succcurlyeq \vB \succcurlyeq \vzero$; see \cite[Chap.~4]{Pukelsheim-93} for a statistical interpretation of this requirement. Popular choices of $\Phi$ include $\Phi(\vM) = \det(\vM)$, $\Phi(\vM) = \lambda_{\min}(\vM)$ (smallest eigenvalue), $\Phi(\vM) = -\tr(\vM^{-1})$, and $\Phi(\vM) = (\tr(\vM^p))^{1/p}$ for $p\geq 1$.

The main result (Theorem 2) of \cite{Papp-2012} is that if the basis functions $f_i$ in \eqref{eq:model} are rational functions, then the optimal design can be computed by solving two semidefinite programs. (For completeness, the result is repeated in the Appendix.) The first semidefinite program determines a polynomial whose roots are the support points of the optimal design, while the second one is used to determine the probability masses assigned to the support points once the support points are known. The first semidefinite program involves a constraint that a polynomial be nonnegative over the design space $\cI$; see Theorem \ref{thm:main} in the Appendix. Therefore, this is an instance of \eqref{eq:SILP} that cannot be handled effectively using the commonly used cutting plane methods of semi-infinite linear programming.

Most practical problems involve basis functions that are not polynomials or rational functions, therefore we follow the approach suggested in Section \ref{sec:introduction}, and replace each $f_i$ by a close polynomial (or rational) approximant. If any of these approximants has a high degree, then the aforementioned semidefinite program determines a high-degree polynomial that must be nonnegative over $\cI$, and that must be computed with sufficient accuracy that allows its roots to be precisely computed.

\deletethis{
\begin{theorem}[\cite{Papp-2012}]\label{thm:main}
	Suppose that in the linear model \eqref{eq:model} the design space $\mathcal{I}$ is a finite union of closed and bounded intervals, the functions $f_i$ are rational functions with finite values on $\mathcal{I}$, and $\omega$ is a nonnegative rational function on $\mathcal{I}$. Let $\Phi$ be a semidefinite representable function with representation \eqref{eq:SDP-rep_def}, admissible with respect to the set of Fisher information matrices $\Mm = \conv\{ \vf(t)\vf(t)^\T\omega(t)\,|\,t\in \mathcal{I} \}$. Then the support of the $\Phi$-optimal design is a subset of the real zeros of the polynomial $\pi$ obtained by solving the following semidefinite programming problem:
	\begin{subequations}\label{eq:final}
		\begin{align}
		\mathop{\operatorname{minimize}}_{\substack{y\in\real,\; \pi\in\real^{d+1},\\ \vW_1\in\mathbb{S}^{k_1}_+,\, \dots, \,\vW_p \in \mathbb{S}^{k_p}_+}}\; & y \\
		\operatorname{subject\,to}\quad\;\;\, & \sum_{i=1}^p \langle \vW_i, \vB_i \rangle = -1, \quad \sum_{i=1}^p C_i^*(\vW_i) = 0,\label{eq:final-BC}\\
		& \pi = \Pi(y, \vW_1, \dots, \vW_p),\label{eq:PI-constr} \\
		& \pi(t) \geq 0\quad \forall\, t\in\mathcal{I},\label{eq:POP-constr}
		\end{align}
	\end{subequations}
	where $d$ is the degree of the polynomial
	\begin{equation}\label{eq:def_pi}
	t \mapsto \lcm(\den(\omega), \den(f_1^2), \dots, \den(f_m^2)) \bigg(y - \sum_{i=1}^p \langle \vW_i, A_i(\vM(\xi_t))+\vD_i\rangle\bigg),
	\end{equation}
	whose coefficient vector is denoted by $\Pi(y, \vW_1, \dots, \vW_p)$ in \eqref{eq:PI-constr} above.
\end{theorem}

Note that the operator $\Pi$ in \eqref{eq:def_pi} is affine, hence aside from \eqref{eq:POP-constr} every constraint in \eqref{eq:final} is a linear equation or linear matrix inequality. Furthermore, \eqref{eq:POP-constr} can be translated to linear matrix inequalities using \mbox{Lemma \ref{lem:pospoly-sd-rep}}. Hence, \eqref{eq:final} is indeed a semidefinite program.
}

\begin{example}\label{ex:design-linearmodel}
Consider the linear regression model \eqref{eq:model} involving a mixture of $m=3$ Gaussians $f_i=\exp(-3(x-\mu_i)^2)$ with $\mu_1 = -0.5$, $\mu_2=0$, and $\mu_3=0.5$, and suppose we are interested in finding the best E-optimal design for determining the best fit, over the design space $\cI=[-1,1]$. Since the $f_i$ are not polynomials, we will approximate them by high-degree polynomial interpolants.

E-optimality means maximizing the smallest eigenvalue $\Phi=\lambda_{\min}$ of the Fisher information matrix. Therefore, this is a semidefinite representable optimality criterion:
\[\lambda_{\min}(\vM) \geq z \quad \iff \quad \vM - \vI z \succcurlyeq \vzero.\]
Invoking Theorem \ref{thm:main} in the Appendix with $p=1$, $A_1=\operatorname{id}$, $B_1=\vI$, $C_1(\cdot)=0$, and $D_1=\vzero$ in \eqref{eq:SDP-rep_def}, we obtain that the support of the optimal design is a subset of the roots of the optimal polynomial $\pi$ determined by the solution of the optimization problem
\begin{align*}
		\mathop{\operatorname{minimize}}_{y\in\real,\; \pi\in\real^{d+1}, \vW\in\mathbb{S}^{3}_+} & y \\
		\operatorname{subject\,to}\quad\;\;\, & \tr(\vW) = -1\\
		& \pi(t) \defeq y-\vW \bullet \vM_t \geq 0 \quad \forall\, t\in [-1,1],
\end{align*}
where $\vM_t = \vf(t)\vf(t)^T$ with $\vf(t) = (f_1(t), f_2(t), f_3(t))^\T$.

Using Chebfun (or the bound of Proposition \ref{thm:prop1}), we obtain that all nonpolynomial functions involved in the optimization problem (including not only $f_i$, but also the products $f_if_j$) can be approximated within machine-precision uniform error over $[-1,1]$ by polynomial interpolants on $40$ Chebyshev points. The nonnegativity constraint is replaced by the constraint that $\pi$ is weighted-sum-of-squares with weights $1+t$ and $1-t$.

As in the previous examples, we solved the resulting semidefinite program, and obtained the optimal polynomial shown on Figure \ref{fig:design-linearmodel}. The polynomial has three roots in $[-1,1]$, these are  $\pm 0.7410$ and $0$.
\begin{figure}[htb]
	\includegraphics[width=0.5\columnwidth]{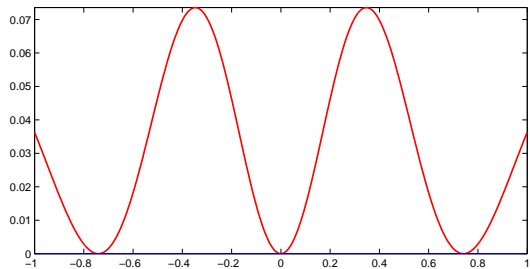}
	\caption{The optimal polynomial $\pi$ of degree $40$ from the optimal design of experiments problem discussed in Example \ref{ex:design-linearmodel}. The optimal design is supported on the roots of this polynomial located in $[-1,1]$, which are approximately $\pm 0.7410$ and $0$.}
	\label{fig:design-linearmodel}
\end{figure}


This example was also implemented in Matlab, and solved with multiple solvers. Neither solver reported any errors or warnings during the solution, and returned the same solution (within the expected accuracy).
\end{example}

\subsubsection{Extension to nonlinear regression models}
A similar approach can be used to compute optimal designs of experiments for nonlinear regression models
\[ y(t) = f(t;\beta_1,\dots,\beta_m) + \epsilon(t).\]
For such models, there are several non-equivalent definitions of optimal designs, here we only consider the simplest ones called \emph{local optimal designs}. (These are not local optimal solutions to a nonconvex optimization model; the term ``local'' has a statistical meaning in this context that will be clarified below.)

Local optimal designs for nonlinear models are defined in an almost identical manner to optimal designs for linear models, except that the in the definition \eqref{eq:Fisher} of the Fisher information matrix, the matrix $\vf(t)\vf(t)^\T$ is replaced by the matrix
\[ (\partial f(t)/\partial \beta)(\partial f(t)/\partial \beta)^\T, \]
where $\partial f(t)/\partial \beta$ denotes the vector of partial derivatives $\partial f(t)/\partial \beta_i$, $i=1,\dots,m$. It can be seen that for linear models, $\partial f(t)/\partial \beta$ is simply $\vf$, and we arrive at the previous definition. However, for nonlinear models, $\partial f(t)/\partial \beta$ is dependent on the unknown parameters $\beta$ that the experiment is designed to identify. Therefore, local optimal designs are defined with respect to an initial guess $\hat\beta$ of these parameter values; replacing the true Fisher information matrix with a local estimate,
\[ \vM_{\hat\beta}(\xi) = \int_\cI \left(\frac{\partial f(t)}{\partial \beta}\right)\left(\frac{\partial f(t)}{\partial \beta}\right)^\T\Bigg|_{\beta=\hat\beta} \omega(t) d\xi(t).\]
A design $\hat\xi$ is said to be \emph{locally optimal with respect to $\Phi$}, or \emph{locally $\Phi$-optimal} for short, if $\Phi(\vM_{\hat\beta}(\hat\xi))$ is maximum. The semidefinite programming characterization of the support of local optimal designs is entirely analogous to the characterization for linear models given in Theorem \ref{thm:main}.

\begin{example}\label{ex:design-logisticmodel}
We consider what is perhaps the simplest nonlinear regression model: logistic regression with only two parameters. In this model we have
\[ f(t) = (1+\exp(-\beta_0-\beta_1 t))^{-1},\]
and the partial derivatives of $f$ with respect to the parameters are
\[ \partial f(t)/\partial\beta_0 = (2+2\cosh(\beta_0+\beta_1 t))^{-1} \quad \text{and} \quad \partial f(t)/\partial\beta_1 = t(2+2\cosh(\beta_0+\beta_1 t))^{-1}. \]
Using the shorthand $g(t) = (2+2\cosh(\hat\beta_0+\hat\beta_1 t))^{-1}$, the optimization model characterizing the support involves as a constraint the nonnegativity of a function that is in the space
\[ \operatorname{span}\{1, g(t), t g(t), t^2 g(t)\}.\]
For our numerical example, we chose the values $\beta_0=0$ and $\beta_1=12$.

In spite of its apparent simplicity, the semidefinite program that arises from a polynomial approximation of $g$ is far from straightforward. First, it can be shown that $g$ is a function that exhibits the \emph{Runge phenomenon} \cite{Epperson-1987}, that is, the maximum (pointwise) error between $g$ and its polynomial interpolants on equispaced points does not tend to zero as the number of interpolation points increases. \mbox{(Figure \ref{fig:Runge}.)} Even if we use Chebyshev points, the 100-degree interpolant of $g$ is highly oscillatory, and yields entirely wrong results, with no correct significant digits in the optimal objective function value.

On the other hand, the 200-degree interpolant of $g$ on Chebyshev points already has a uniform error less than machine precision over $[-1,1]$. Using this approximation, we obtain that the optimal polynomial has two roots: $\pm 0.08697$. As in the previous examples, all of the SDP solvers returned the same solution without numerical problems, in spite of the high degree of the polynomials involved in the computation.
\begin{figure}[htb]
	\includegraphics[width=0.5\columnwidth]{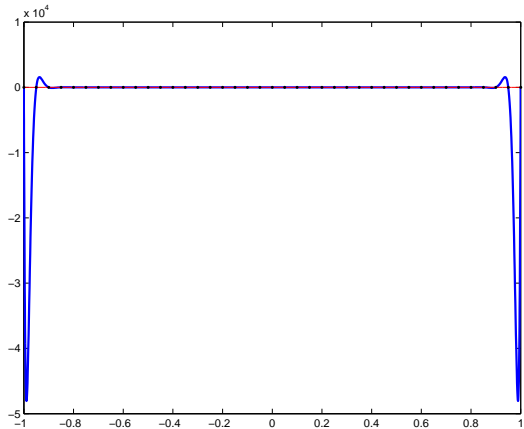}
	\caption{The function $g$ from Example \ref{ex:design-logisticmodel} exhibits the Runge phenomenon. Shown are the function $g$ in red (this bell-shaped curve appears to be a horizontal line because of the scale) and its polynomial interpolant on $40$ equispaced points.}
	\label{fig:Runge}
\end{figure}
\end{example}

\section{Discussion}\label{sec:conclusions}
While the examples presented in the previous section are admittedly toy examples of their respective application areas, the semidefinite programs they lead to are impossible to solve with commonly used semidefinite solvers if the nonnegative polynomial constraints are translated to semidefinite constraints in the standard fashion, due to the high degrees of the polynomials involved. Example \ref{ex:design-logisticmodel} in particular demonstrates that the possibly simplest problem in optimal design of experiments for nonlinear statistical models (the two-parameter version of the most studied nonlinear regression model) involves nonpolynomial functions that cannot be adequately approximated with polynomials of less than hundred degrees, therefore there is a definite need to be able to reliably optimize over cones of sum-of-squares polynomials of hundreds of degrees.

The numerical results from these experiments, on the other hand, demonstrate that the semidefinite representations of sum-of-squares interpolants enable the solution of these problems with all of the existing semidefinite programming solvers tried in this study: CSDP, SDPT3, and SeDuMi, thanks to the considerably better numerical properties of the representation. The optimal polynomials were computed with high enough accuracy that even their roots could be located precisely---this is particularly important for their application in the design of experiments.

\revision{It can be beneficial to consider rational function approximations in place of polynomial approximations.  The semidefinite programming formulations can be derived in an analogous fashion: if a basis $\{f_1,\dots,f_n\}$ of a space $F$ of functions have rational function approximations $f_i(t)\approx p_i(t)/q_i(t)$, then the cone of nonnegative functions in $F$ can be represented by the set of coefficients
\[\Big\{\alpha\in\real^n\,\Big|\,\sum_{i=1}^n \alpha_i p_i(t)/q_i(t) \geq 0\; \forall\, t\Big\} = \Big\{\alpha\in\real^n\,\Big|\,\sum_{i=1}^n \alpha_i q(t) p_i(t)/q_i(t) \geq 0\; \forall\, t\Big\},\]
where $q(t)$ is the least common multiple of the denominators $q_1,\dots,q_n$. The latter description of the cone is a preimage of the cone of nonnegative univariate polynomials, and is therefore semidefinite representable. From the perspective of this work, rational function approximations are preferred to polynomial approximations as long as the degree of the polynomials $qp_i/q_i$ are smaller than the degree required for a good polynomial approximation of the functions $f_i$.}

To derive the semidefinite programming models, it is essential that the high degree polynomial interpolants of given functions be computed quickly and accurately, and that the resulting interpolants can be processed as needed. For instance, the evaluation of interpolants at points other than the interpolation points has to be efficient and stable, and the roots of interpolants need to be computed efficiently. All the infrastructure required for these computations are already available in constructive approximation packages such as the Matlab toolbox Chebfun. Our results show that these tools can be combined with the existing semidefinite programming solvers to obtain a reliable tool that can solve semi-infinite linear programs with general univariate functional constraints, without the need to derive tailor-made specialized algorithms.

Some questions remain open, mostly around the issue of efficiency. A general criticism of sum-of-squares techniques is that the semidefinite programming representations of nonnegative polynomials of degree $n$ turn what is inherently an $O(n)$-variable optimization problem into a problem with $O(n^2)$ variables. This limits somewhat the degree of polynomials that can be practically handled in sum-of-squares optimization. The techniques presented in this paper do not address this issue, only the problem of poor scaling.

That said, the semidefinite solvers could handle all instances of the problems that could fit in the 32 GBs of memory of the desktop computer used in the experiments; this means several constraints involving 1000-degree polynomials. For applications requiring polynomials of even higher degree \revision{(or even low-degree polynomials with a large number of variables)}, it will be necessary to find either sparser representations, or tailor-made methods that can handle the higher degree sum-of-squares constraints directly, avoiding the use of semidefinite representation of size $O(n^2)$. \revision{These might also be helpful in the generalization of the approach to multivariate polynomials. The semidefinite representability of interpolants generalize relatively easily to the multivariate case, but the sizes of the semidefinite representations motivate further research into the algorithms that can solve this large-scale problems efficiently.}
\bibliographystyle{plain}
\bibliography{silp_interpolants}

\appendix

\section{A characterization of the support of optimal experimental designs}

The statement of the theorem requires two definitions and some notation.

First, elaborating on the definition used in Section \ref{sec:introduction}, we say that the concave and continuous objective function $\Phi:\Smp\mapsto\real$ of a maximization problem is \emph{semidefinite representable} if its closed upper level sets are semidefinite representable, that is, if for some $k_1, \dots, k_p$ and $\ell$ there exist linear functions $A_i:\Smp\mapsto\Ski$, $C_i:\real^\ell\mapsto\Ski$, and matrices $\vB_i \in \Ski$, $\vD_i\in\Ski$ $(i=1, \dots, p)$ such that for all $\vX\in\Smp \text{ and } z\in\real$,
$\Phi(\vX) \geq z$ holds if and only if there exists a $\vu \in \real^\ell$ satisfying
\begin{equation}\label{eq:SDP-rep_def}
A_i(\vX) + \vB_iz + C_i(\vu) + \vD_i \succcurlyeq \vzero \quad i = 1, \dots, p.
\end{equation}

Second, we say that the semidefinite representable function $\Phi:\Smp\mapsto\real$ is \emph{admissible} with respect to the set $\mathcal{X} \subseteq \Smp$ if $\Phi$ has a representation \eqref{eq:SDP-rep_def} for which there exists an $\hat{\vX} \in \mathcal{X}$ satisfying \eqref{eq:SDP-rep_def} with strict inequality for some $z$ and $\vu$. In other words, the left-hand side of each of the $p$ inequalities can be made positive definite simultaneously for at least one $\hat{\vX} \in \mathcal{X}$.

The latter is a technical condition; most interesting functions $\Phi$ are admissible with respect to every non-empty set $\mathcal{X}$, or at least with respect to every $\mathcal{X}$ that contains a non-singular matrix. Specifically, all commonly used optimality criteria, including D-, E-, and A-optimality are semidefinite representable, and they are also admissible with respect to every set of Fisher information matrices for which the criteria is well-defined.

We shall also use the common notation that the adjoint of a linear operator $C$ is denoted by $C^*$.

Now we are ready to restate the characterization of the support of optimal experimental designs.

\begin{theorem}[\cite{Papp-2012}]\label{thm:main}
Suppose that in the linear model \eqref{eq:model} the design space $\mathcal{I}$ is a finite union of closed and bounded intervals, the functions $f_i$ are rational functions with finite values on $\mathcal{I}$, and $\omega$ is a nonnegative rational function on $\mathcal{I}$. Let $\Phi$ be a semidefinite representable function with representation \eqref{eq:SDP-rep_def}, admissible with respect to the set of Fisher information matrices $\Mm = \conv\{ \vf(t)\vf(t)^\T\omega(t)\,|\,t\in \mathcal{I} \}$. Then the support of the $\Phi$-optimal design is a subset of the real zeros of the polynomial $\pi$ obtained by solving the following semidefinite programming problem:
\begin{subequations}\label{eq:final}
	\begin{align}
	\mathop{\operatorname{minimize}}_{\substack{y\in\real,\; \pi\in\real^{d+1},\\ \vW_1\in\mathbb{S}^{k_1}_+,\, \dots, \,\vW_p \in \mathbb{S}^{k_p}_+}}\; & y \\
	\operatorname{subject\,to}\quad\;\;\, & \sum_{i=1}^p \langle \vW_i, \vB_i \rangle = -1, \quad \sum_{i=1}^p C_i^*(\vW_i) = 0,\label{eq:final-BC}\\
	& \pi = \Pi(y, \vW_1, \dots, \vW_p),\label{eq:PI-constr} \\
	& \pi(t) \geq 0\quad \forall\, t\in\mathcal{I},\label{eq:POP-constr}
	\end{align}
\end{subequations}
where $d$ is the degree of the polynomial
\begin{equation}\label{eq:def_pi}
t \mapsto \lcm(\den(\omega), \den(f_1^2), \dots, \den(f_m^2)) \bigg(y - \sum_{i=1}^p \langle \vW_i, A_i(\vM(\xi_t))+\vD_i\rangle\bigg),
\end{equation}
whose coefficient vector is denoted by $\Pi(y, \vW_1, \dots, \vW_p)$ in \eqref{eq:PI-constr} above.
\end{theorem}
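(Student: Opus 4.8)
The plan is to obtain \eqref{eq:final} as the Lagrangian dual of a conic reformulation of the $\Phi$-optimal design problem and then to read the support statement off the complementary slackness conditions. First I would lift the design problem using the semidefinite representation \eqref{eq:SDP-rep_def} of $\Phi$: maximizing $\Phi(\vM(\xi))$ over probability measures $\xi$ on $\cI$ is equivalent to
\[
\begin{aligned}
\operatorname{maximize}_{z,\vu,\xi}\quad & z\\
\text{subject to}\quad & A_i(\vM(\xi)) + \vB_i z + C_i(\vu) + \vD_i \succcurlyeq \vzero, \quad i=1,\dots,p,\\
& \xi\ \text{a probability measure on}\ \cI .
\end{aligned}
\]
By \eqref{eq:Fisher} and linearity of the $A_i$ one has $\vM(\xi)=\int_\cI\vf(t)\vf(t)^\T\omega(t)\,d\xi(t)$, so $\vM(\xi)$ ranges exactly over $\Mm=\conv\{\vf(t)\vf(t)^\T\omega(t)\mid t\in\cI\}$, the objective is concave in $\xi$, and the feasible set is convex; each matrix inequality is the $\xi$-integral of the Dirac-level quantity $A_i(\vM(\xi_t))+\vB_iz+C_i(\vu)+\vD_i$.

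Next I would form the Lagrangian dual with multipliers $\vW_i\in\Ski_+$ for the $p$ inequalities. Requiring the supremum of the Lagrangian over $z$ and $\vu$ to be finite forces the dual equations $\sum_i\langle\vW_i,\vB_i\rangle=-1$ and $\sum_i C_i^*(\vW_i)=0$ of \eqref{eq:final-BC}; the remaining supremum over probability measures of a linear functional $\int_\cI h(t)\,d\xi(t)$ equals $\max_{t\in\cI}h(t)$ (attained at a Dirac, as $\cI$ is compact and $h$ is continuous), which yields the semi-infinite constraint $y\ge\sum_i\langle\vW_i,A_i(\vM(\xi_t))+\vD_i\rangle$ for all $t\in\cI$ with $y$ the dual objective. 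Multiplying through by $L(t)\defeq\lcm(\den(\omega),\den(f_1^2),\dots,\den(f_m^2))$ --- which can be taken strictly positive on $\cI$ since $\omega\ge 0$ there and the $\den(f_i^2)$ are perfect squares --- converts the left side into the polynomial \eqref{eq:def_pi} of degree $d$ (whose coefficients define the affine map $\Pi$) and the constraint into $\pi(t)\ge 0$ on $\cI$. This is exactly \eqref{eq:final}.

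It remains to justify strong duality and extract the support statement. Admissibility supplies some $\hat{\vX}\in\Mm$ satisfying \eqref{eq:SDP-rep_def} strictly for suitable $z$ and $\vu$; this is a Slater point for the lifted primal, so strong duality holds and both optima are attained --- say by a design $\hat\xi$ on the primal side and by $(y^\star,\vW_1^\star,\dots,\vW_p^\star)$, with associated polynomial $\pi^\star$, on the dual side (dual attainment following from a coercivity and compactness argument for the feasible set cut out by \eqref{eq:final-BC} and $\vW_i\succcurlyeq\vzero$). Testing the primal feasibility inequalities for $\hat\xi$ against the $\vW_i^\star$ and combining with the zero duality gap $z^\star=y^\star$ gives $\int_\cI\bigl(\pi^\star(t)/L(t)\bigr)\,d\hat\xi(t)=0$; since the integrand is nonnegative on $\cI$ and $L>0$ there, it vanishes $\hat\xi$-almost everywhere, so every support point of $\hat\xi$ is a real zero of $\pi^\star$. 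As this holds for every $\Phi$-optimal $\hat\xi$ and every optimal $\pi^\star$ of \eqref{eq:final}, the claim follows (and in particular every optimal design is automatically finitely supported).

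The main obstacle I anticipate is the rigorous treatment of the infinite-dimensional duality: verifying that \eqref{eq:final} is genuinely the dual of the measure-valued primal, that strong duality holds with the dual optimum attained rather than merely approached, and that the complementary slackness identity is valid in the measure-theoretic sense used in the last step --- this is precisely where the Slater point from admissibility is essential. The polynomial bookkeeping, namely that $\pi$ is a genuine polynomial of degree exactly $d$ and that clearing denominators by $L$ leaves the nonnegativity set on $\cI$ unchanged, is routine once $L>0$ on $\cI$ has been arranged, but it must be tracked carefully through \eqref{eq:def_pi}.
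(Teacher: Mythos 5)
This theorem is imported verbatim from \cite{Papp-2012}; the present paper states it without proof, so there is no internal proof to compare your argument against. That said, your Lagrangian-duality derivation---lifting the design problem via \eqref{eq:SDP-rep_def}, obtaining \eqref{eq:final-BC} from finiteness of the dual function over $z$ and $\vu$, reducing the supremum over probability measures on the compact set $\cI$ to a pointwise maximum and clearing denominators to reach \eqref{eq:POP-constr}, invoking admissibility as a Slater condition, and extracting the support statement from complementary slackness---is the standard route to results of this Kiefer--Wolfowitz equivalence type and is essentially the argument of the cited source; I see no gap in it. The only two points needing care are ones you already flag: arranging $L>0$ on $\cI$ (which can always be done by writing each denominator as a square, using that the $f_i$ and $\omega$ are finite on $\cI$), and dual attainment, which in fact follows directly from the primal Slater point by conic duality without a separate coercivity argument.
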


Note that the operator $\Pi$ in \eqref{eq:def_pi} is affine, hence aside from \eqref{eq:POP-constr} every constraint in \eqref{eq:final} is a linear equation or linear matrix inequality. Furthermore, \eqref{eq:POP-constr} can be translated to linear matrix inequalities using a sum-of-squares interpolant representation. Hence, \eqref{eq:final} is indeed a semidefinite program.
\end{document}